\def\phi{\varphi }
\def\arcosh{{\rm arcosh}\>}
\theoremstyle{plain}
\newtheorem{theorem}{Theorem}[section]
\newtheorem{lemma}[theorem]{Lemma}
\newtheorem{proposition}[theorem]{Proposition}
\theoremstyle{definition}
\newtheorem{definition}[theorem]{Definition}
\newtheorem{remark}[theorem]{Remark}
\theoremstyle{remark}
\numberwithin{equation}{section}
\title{Some limit theorems for random walks associated with hypergeometric functions of type BC}
\author{Merdan Artykov\footnote{This author has been supported by the Deutsche Forschungsgemeinschaft (DFG) via RTG 2131 \textit{High-dimensional Phenomena in Probability - Fluctuations and Discontinuity.}},  Michael Voit \\
	Fakult\"at Mathematik,  Technische Universit\"at Dortmund  \\
Vogelpothsweg 87, D-44221 Dortmund, Germany \\
	e-mail: merdan.artykov@tu-dortmund.de, michael.voit@math.tu-dortmund.de}
\date{\today}
\begin{document}
\maketitle

\begin{abstract}

The spherical functions of the
 noncompact Grassmann manifolds $G_{p,q}(\mathbb F)=G/K$ over 
${\mathbb F=\mathbb R}$, $\mathbb C, \mathbb H$ with rank $q\ge1$ and 
dimension parameter $p>q$ are
 Heckman-Opdam hypergeometric functions of type BC, when
the double coset spaces  $G//K$ are identified with the
  Weyl chamber $ C_q^B\subset \mathbb R^q$ of type B.
The associated double coset hypergroups on  $ C_q^B$ can be embedded into a continuous family
of commutative hypergroups $(C_q^B,*_p)$ with $p\in[2q-1,\infty[$ associated with these
hypergeometric functions by  R\"osler (2010). Several limit theorems for random walks 
on these  hypergroups were recently derived by Voit (2017).
 We here present further limit theorems when the
 time  as well as  $p$ tend to $\infty$. For integers $p$, 
this admits interpretations for group-invariant random walks on the Grassmannians $G/K$.

\end{abstract}

\smallskip
\noindent
Key words: Hypergeometric functions associated with root systems,
 non-compact Grassmann manifolds, spherical functions,
random walks on  symmetric spaces, random walks on hypergroups, 
 moment functions, central limit
theorems, laws of large numbers, large dimensions.

\noindent
AMS subject classification (2000): 60B15, 43A62, 60F05,  43A90, 33C67.


\section{Introduction}
In this paper we present several  limit theorems for group invariant random walks on the
 non-compact Grassmann manifolds $G_{p,q}(\mathbb F)=G/K$ over the (skew-)fields $\mathbb F=\mathbb R, \mathbb C, \mathbb H$.
 We state these results via
 the associated double coset spaces  $G//K$ 
which can be identified with  the Weyl chambers $C_q^B\subset \mathbb R^q$
of type $B$. The associated spherical functions, regarded as functions on  $C_q^B$, are then hypergeometric functions of type $BC$,
 and it turns out that the limit theorems can be derived for a  larger class of Markov chains on  $C_q^B$ 
whose transition probabilities are related these with  hypergeometric functions beyond the  group parameters.

Let us recapitulate some details of the general setting.
The Heckman-Opdam theory of hypergeometric functions associated 
with root systems  generalizes the  theory of spherical functions on
Riemannian symmetric spaces; see \cite{H},
\cite{HS} and \cite{O1} for the general theory,
and  \cite{R2}, \cite{RKV}, \cite{RV1}, \cite{S1}, \cite{S2}, \cite {Sch}, \cite{NPP} 
 for some recent developments.
In this paper we are mainly interested in the type $BC$, but we  also need
some facts on the $A$-case as a limit; see \cite{RKV}, \cite{RV1}.

We recapitulate that for the root system $A_{q-1}$, $q\ge2$, the  hypergeometric functions are 
connected with the groups $G:=GL(q,\mathbb F)$ with  maximal compact
subgroups $K:=U(q,\mathbb F)$. Moroever, for the root system $BC_{q}$, $q\ge1$,
 the  hypergeometric functions are related with
 the non-compact Grassmann manifolds $\mathcal G_{p,q}(\mathbb F):=G/K$
with $p> q$, where depending on $\mathbb F$, 
the group $G$
is one of the indefinite
orthogonal, unitary or symplectic  groups
$ SO_0(q,p),\, SU(q,p)$  or $Sp(q,p)$ with
 $K=
SO(q)\times SO(p), \, S(U(q)\times U(p))$ or $Sp(q)\times Sp(p),$
as maximal compact subgroup.

In all cases,  the $K$-spherical functions on $G$ 
(i.e., the nontrivial, $K$-biinvariant, multiplicative continuous functions 
on $G$) are nontrivial, multiplicative continuous functions on the double coset space $G//K$
 where $G//K$ carries  commutative 
double coset hypergroup structure.
The
$KAK$-decomposition of $G$ shows that $G//K$
may be identified with the Weyl chambers
$$C_q^A:= \{x=(x_1,\cdots,x_q)\in\mathbb  R^q: \> x_1\ge x_2\ge\cdots\ge
x_q\}$$
of type $A$
and
$$C_q^B:=\{x=(x_1,\cdots,x_q)\in\mathbb  R^q: \> x_1\ge x_2\ge\cdots\ge x_q\ge0\}$$
of type $B$ respectively. 
This identification is based on  a exponential
mapping $x\mapsto a_x\in G$ from the Weyl chamber to a system of
representatives $a_x$ of the double cosets in $G$  with
 \begin{equation}\label{a_t-A}a_x := e^{\underline x}  \end{equation}
 for $x\in C_q^A$ in
the $A$-case, 
and 
 \begin{equation}\label{a_t-BC}
a_x:=\begin{pmatrix} \cosh\underline x & \sinh \underline x& 0 \\
          \sinh  \underline x & \cosh  \underline x & 0 \\
   0 & 0 & I_{p-q}
       \end{pmatrix}  \end{equation}
for $x\in C_q^B$ in the $BC$-case with the diagonal matrices
$$ e^{\underline x}:= \text{diag}(e^{x_1},\ldots,e^{x_q}), \,
  \cosh\underline x = \text{diag}(\cosh x_1,
\ldots, \cosh x_q), \, \sinh \underline x = \text{diag}(\cosh x_1,
\ldots, \cosh x_q).$$
We  identify  $G//K$ with
$C_q^A$ or $C_q^B$ respectively. We also fix $q$ and, in the $BC$-case, $p>q$.

For the spherical functions we follow \cite{HS} and denote the
Heckman-Opdam hypergeometric functions associated with the root systems
\[  2\cdot A_{q-1} = \{ \pm 2(e_i-e_j): 1\leq i < j \leq
q\} \subset \mathbb R^q\]
 and
\[  2\cdot BC_q = \{ \pm 2e_i, \pm 4 e_i, \pm 2e_i \pm 2e_j: 1\leq i < j \leq
q\} \subset \mathbb R^q\]
by  $F_{A}(\lambda, k;t)$ and  $F_{BC}(\lambda, k;x)$ respectively
with spectral variable $\lambda \in \mathbb C^q$  and multiplicity parameter(s)
$k$. Here, $e_1,\ldots,e_q$ are the unit vectors in $\mathbb R^q$. 
The factor $2$ in both root systems comes from the known connections of
the Heckman-Opdam theory to spherical functions on symmetric spaces  in
\cite{HS} and references  there.
 In the $ A_{q-1} $-case, the spherical functions on $G//K\simeq C_q^A$ are
then 
\begin{equation}\label{def-spherical-a}
 \phi_\lambda^A(a_x):=\phi_\lambda^A(x):= e^{i\cdot \langle x-\pi(x),\lambda\rangle} \cdot F_{A}(i\pi(\lambda),d/2;\pi(x))
\quad\quad (x\in \mathbb R^q, \>\lambda \in \mathbb C^q  )
\end{equation}
with   multiplicity
$k=d/2$   where
 $$d:= \dim_{\mathbb R} \mathbb F\in\{1,2,4\} \quad\quad\text{for}\quad\quad
\mathbb F=\mathbb R, \mathbb C, \mathbb H,$$ 
 and where
$$\pi:\mathbb R^q\to \mathbb R^q_0:=\{t\in\mathbb R^q:\> x_1+\ldots+x_q=0\}$$
 is the orthogonal projection w.r.t.~the standard scalar product as in Eq.~(6.7) of \cite{RKV} and $a_t$ is identified with $x$.
 In the $BC$-case, the spherical functions on
 $G//K\simeq C_q^B$ are given by
\begin{equation}\label{def-spherical-bc} 
\phi_\lambda^p(a_x):=\phi_\lambda^p(x):=F_{BC}(i\lambda,k_p;x)
\quad\quad (x\in \mathbb R^q, \>\lambda \in \mathbb C^q)
\end{equation}
with  multiplicity
$$k_p=(d(p-q)/2, (d-1)/2, d/2)\subset \mathbb R^3$$
corresponding to the roots $\pm 2 e_i$,  $\pm 4 e_i$ and  $2(\pm  e_i\pm e_j)$ where again  $a_x$ is identified with $x$.

 In the $BC$-case, the
 associated double coset convolutions $*_{p}$ of measures on
$C_q^B$ are written down explicitly in  \cite{R2}
 for $p\ge 2q$  such 
that these convolutions and the associated product formulas for the associated
hypergeometric functions $F_{BC}$ above
  can be extended to  $p\in[ 2q-1,\infty[$ by
analytic continuation.
These convolutions  $*_{p}$ on the space $\mathcal{M}(C_q^B)$ of all bounded regular Borel measures on $C_q^B$
are  associative,
commutative, and  probability-preserving, and  they generate commutative
hypergroups $(C_q^B,*_{p})$ in the sense of Dunkl, Jewett, and
Spector with $0\in C_q^B$ as identity  by \cite{R2}. For  hypergroups we generally refer to 
  \cite{J} and \cite{BH}.
 The nontrivial multiplicative continuous functions of these commutative
 hypergroups $(C_q^B,*_{p})$ are precisely the functions $ \phi_\lambda^p$ with $\lambda\in
 \mathbb C^q$ by \cite{R2}. This means that
for all $x,y\in C_q^B$ and
 $\lambda\in \mathbb C^q$,
$$\phi_\lambda^p(x)\phi_\lambda^p(y)=
\int_{C_q^B}\phi_\lambda^p(t)\> d(\delta_x*_p\delta_y)(t)$$
where the probability measures $\delta_x*_p\delta_y\in \mathcal{M}^1(C_q^B)$ with
compact support are given by
\begin{equation}\label{convo-formel}
(\delta_x*_p\delta_y)(f)=
\frac{1}{\kappa_p}\int_{B_q}\int_{U(q,\mathbb F)} 
f\Bigr(\arcosh(\sigma_{sing}(\sinh\underline x \,w\,\sinh\underline y \,+\,
\>\cosh \underline x\> v\> \cosh \underline y
))\Bigr)
\> dv\> dm_p(w)
\end{equation}
for $f\in C(C_q^B)$.
Here, $dv$ means integration w.r.t.~the normalized Haar measure on
 $U(q,\mathbb F)$, $B_q$ is the matrix ball
$$B_q:= \{ w\in M_{q}(\mathbb F):\> w^*w\le I_q\},$$ 
and $dm_p(w)$ is the probability measure
\begin{equation}\label{probab-mp}
dm_p(w):=\frac{1}{\kappa_{p}} \Delta(I-w^*w)^{d(p/2+1/2-q)-1}\> dw
\quad \in \mathcal{M}^1( B_q)
\end{equation}
where $dw$ is the Lebesgue measure on the ball $B_q$, and the normalization
$\kappa_{p}>0$ is chosen such that $dm_p(w)$ is a
probability measure. For $p=2q-1$ there is a corresponding degenerated 
formula where $m_p \in \mathcal{M}^1( B_q)$  becomes singular; see
Section 3 of \cite{R1} for  details.

For fixed parameters  $p\in [2q-1,\infty[$ and $d=1,2,4$ we now consider random walks on the hypergroups
 $(C_q^B,*_{p})$ as follows: Fix a probability measure $\nu\in \mathcal{M}^1(C_q^B)$, and consider a time-homogeneous 
Markov process $(\tilde S_k^p)_{k\ge0}$ on $C_q^B$ with start at the hypergroup identity
 $0\in C_q^B$ and with the  transition
probability
$$P(\tilde S_{k+1}^p\in A|\> \tilde S_k^p=x)= (\delta_x *_p\nu)(A)
\quad\quad(x\in C_q^B, \> A\subset C_q^B \quad\text{a Borel set}).$$
Such Markov processes are called  random walks on the hypergroup $(C_q^B,*_{p})$ associated with the measure $\nu$.
Notice that we here  use $p$ as a superscript, as this $p$ may be variable below. The fixed parameters $q$ and $d$ are suppressed.

We shall present mainly two different types of CLTs for  $(\tilde S_k^p)_{k\ge0}$.

For the first type  in Section 5 we start with some probability measure $\nu$ having classical second moments.
For each constant $c\in[0,1]$ we consider the compression mapping $D_c(x):=cx$ on $C_q^B $ as 
well as the compressed probability measures $\nu_c:=D_c(\nu)\in \mathcal{M}^1(C_q^B)$ and 
 the associated random walks $(S^{(p,c)}_k)_{k\ge 0}$. We prove in Section 4
that $S^{(p,n^{-1/2})}_n$ converges for $n\to\infty$ in distribution to some 
``Gaussian'' measure $\gamma_{t_0}\in \mathcal{M}^1(C_q^B)$ 
which depends on $p$ where the time  $t_0\ge0$ can be computed via 
 second moment of $\nu$. Triangular CLTs of this type are well-known in 
probability theory on groups and hypergroups. We here in particular refer to \cite{BH} and references there 
for several results in this direction for
Sturm-Liouville hypergroups on $[0,\infty[$. Moreover, for integers $p\ge 2q$, this result is known 
for biinvariant random walks on noncompact Grassmannians; see e.g.  \cite{G1}, \cite{G2}, \cite{Te1}, \cite{Te2}, 
\cite{Ri}.

For the second CLT in Section 4 we study the random walks  $(\tilde S_k^p)_{k\ge0}$ for a given fixed 
probability measure $\nu\in \mathcal{M}^1(C_q^B)$ where the time $k$ as well
 as the dimension parameter $p$ tend to infinity in some coupled way.
It turns out that under suitable moment conditions on $\nu$ and for any
  sequence $(p_n)_n\subset [2q,\infty[$ with $p_n\to\infty$,
there are normalizing vectors $m(n)\in\mathbb R^q$ such that 
$(S^{p_n}_n-m(n))/\sqrt n$ tends in distribution to some classical $q$-dimensional normal distribution $N(0,\Sigma^2) $
where the norming vectors  $m(n)$ and the covariance matrix
$\Sigma^2$ are explicitly known and depend  $\nu$. For $q=1$, CLTs of this kind were given 
in \cite{Gr1} and \cite{V1} by completely different methods. Both proofs for $q=1$ however are based on the fact
that for $p\to\infty$, the hypergroup structures  $(C_1^B=[0,\infty[,*_{p})$ converge to some commutative semigroup structure on 
$C_1^B=[0,\infty[$ which is isomorphic with the additive semigroup $([0,\infty[,+)$.
 This observation finally shows that for large $p$,
$(S^{p_n}_n)_n$ behaves  like a sum of iid random variables which then leads to the CLT.
 For $q\ge2$, the situation is much more involved
as here for $p\to\infty$, the  hypergroup structures  $(C_q^B,*_{p})$ converge to the
 double coset structures $G//K$ in the case $A_{q-1}$
in some way, where  the dimension parameter $d=1,2,4$ remains unchanged; see \cite{RKV} and \cite{RV1} for the details.
 As for $q\ge2$, this limit structure is more complicated than for $q=1$,
 the details of the CLT and its proof in Section 3 will be more involved than  
in \cite{Gr1} and \cite{V1}. In fact, we will need stronger conditions either on the moments of $\nu$
 or on the rate of convergence of  $(p_n)_n$ to $\infty$ than in \cite{Gr1}; see Theorems
 \ref{growing parameters 1}, \ref{growing parameters 2} below. 
We remark that the CLTs in \cite{Gr1}, \cite{V1}, and here for the non-compact Grassmannians are related to other
CLTs for radial random walks on Euclidean spaces of large dimensions in \cite{Gr2} and references cited there.
We also point out that our CLTs for $p\to \infty$ are closely related to some CLT in
 the case $A_{q-1}$ in \cite{V2} which depends heavily 
on the concept of moment functions on commutative hypergroups; see \cite{BH} and \cite{Z1} for the general background. 
In fact, we shall need these moment functions for the $BC$-hypergroups 
 $(C_q^B,*_{p})$ as well as for the limit cases associated with
 the case $A_{q-1}$. These moment function will be essential to describe the norming vectors  $m(n)$ and the covariance matrix
$\Sigma^2$ above.
We shall collect several results on these functions in the next section. 
We point out that these results are mainly needed for the CLTs of Section 3, but not for those in Section 4.
We also remark that our CLTs for $p\to\infty$ are related to the research in \cite{B} on the
 limit behaviour of Brownian motions on hyperbolic spaces and noncompact Grassmannians when the dimension tends to infinity.

\section{Modified moments}

Generally, examples of moment functions on a commutative hypergroup can
 be obtained as partial derivatives of the multiplicative functions of the hypergroup w.r.t.~the spectral
 variables at the identity character; see \cite{BH}. To obtain explicit formulas for these moment functions
 for our particular examples on Weyl chambers, we start with explicit integral representations of the 
multiplicative functions in \cite{RV1} which are consequences of the well-known
Harish-Chandra integral representation of spherical functions.

We start with some notations from matrix analysis; we here usually refer to the monograph \cite{HJ}.
For  a Hermitian matrix
 $A= (a_{ij})_{i,j=1,\ldots,q}$ over $\mathbb F$ 
 we denote by $\Delta(A)$ the determinant of $A$, and by
$\Delta_r(A) = \det ((a_{ij})_{1\leq i,j\leq r})\,$ 
the $r$-th principal minor of $A$ for $r=1,\ldots,,q$.
 For $\mathbb F= \mathbb H, $
 these determinants are taken in the sense of Dieudonn\'{e}, i.e. 
$\det(A) = (\det_{\mathbb C} (A))^{1/2}$, when $A$ is considered as a complex
matrix. For each positive Hermitian $q\times q$-matrix $A$ and 
 $\lambda \in \mathbb C^q$ we consider the  power function
\begin{equation}\label{power-function}
 \Delta_\lambda(A) := \Delta_1(A)^{\lambda_1-\lambda_2} \cdot \ldots \cdot
\Delta_{q-1}(A)^{\lambda_{q-1}-\lambda_q}\cdot
\Delta_q(A)^{\lambda_{q}}.
\end{equation}
We shall also need the singular values $\sigma_1(a)\ge \sigma_2(a)\ge\ldots\ge \sigma_q(a)$ of a $q\times q$-matrix $a$
which are ordered by size and which are the ordered eigenvalues of $a^*a$.
Finally, for $x\in C_q^B$, $u\in U_q(\mathbb F)$, and $w\in B_q$, we define
\begin{equation}\label{def-g}
 g(x,u,w):=u^*(\cosh \underline x + \sinh\underline x \cdot w)(\cosh\underline x + \sinh\underline x \cdot w)^*u.
\end{equation}
We recapitulate the following facts; see Lemmas 4.10 and 4.8 of \cite{RV1}: 

\begin{lemma}\label{singular value estimate}
\begin{enumerate}
\item[\rm{(1)}] Consider the probability measures $m_p$ from (\ref{probab-mp}). Then
for each $n\in\mathbb{N}$  there exists a constant $C:=C(q,n,\mathbb{F})$ such that all $p\geqslant2q,$ 
\begin{equation}\label{scaling inequality}
\int_{B_q}\frac{\sigma_1(w)^{2n}}{\Delta(I-w^*w)^{2n}}dm_p(w)\leq \frac{C}{p^n}.
\end{equation}
\item[\rm{(2)}]  Let $x\in C^B_q, w\in B_q, u\in U(q,\mathbb{F})$ and $r=1,...,q$. 
 Then 
 $$\frac{\Delta_r(g(x,u,w))}{\Delta_r(g(x,u,0))} \in 
[(1-\tilde{x}\sigma_1(w))^{2r},(1+\tilde{x}\sigma_1(w))^{2r}] \quad\text{with}\quad \tilde{x}:=\min(x_1,1).$$
\end{enumerate}
\end{lemma}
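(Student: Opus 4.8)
I treat the two parts separately; part (2) is the more elementary and I would dispatch it first. Writing $M:=\cosh\underline x+\sinh\underline x\cdot w$ and letting $E_r$ denote the $q\times r$ matrix formed by the first $r$ standard unit vectors, the definition of the principal minor together with $g(x,u,w)=u^*MM^*u$ gives
$$\Delta_r(g(x,u,w))=\det\bigl(E_r^*u^*MM^*uE_r\bigr)=\det(N^*N),\qquad N:=M^*uE_r,$$
and likewise $\Delta_r(g(x,u,0))=\det(N_0^*N_0)$ with $N_0:=\cosh\underline x\,uE_r$, since $M$ reduces to $\cosh\underline x$ at $w=0$. The key algebraic step is the factorization
$$M^*=\cosh\underline x+w^*\sinh\underline x=(I+w^*\tanh\underline x)\,\cosh\underline x,$$
which uses only that $\cosh\underline x,\sinh\underline x$ are real diagonal and $\sinh\underline x=\tanh\underline x\,\cosh\underline x$. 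Hence $N=(I+w^*\tanh\underline x)N_0$, i.e.\ $N$ is obtained from $N_0$ by left multiplication with $A:=I+w^*\tanh\underline x$.

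I would then invoke the two-sided multiplicative singular value inequalities $s_{\min}(A)\,s_j(B)\le s_j(AB)\le\|A\|_{\mathrm{op}}\,s_j(B)$, the lower bound coming from $s_j(B)=s_j(A^{-1}AB)\le\|A^{-1}\|_{\mathrm{op}}s_j(AB)$ while $A$ is invertible, applied with $B=N_0$. Since $\|w^*\tanh\underline x\|_{\mathrm{op}}\le\sigma_1(w)\tanh x_1\le\sigma_1(w)\tilde x$, where one uses $\tanh t\le\min(t,1)$ for $t\ge0$ so that $\tanh x_1\le\tilde x$, this yields $\|A\|_{\mathrm{op}}\le 1+\tilde x\sigma_1(w)$ and $s_{\min}(A)\ge 1-\tilde x\sigma_1(w)$; note $\tilde x\sigma_1(w)\le1$ because $\tilde x\le1$ and $\sigma_1(w)\le1$ on $B_q$, so $A$ is indeed invertible (the boundary case $\tilde x\sigma_1(w)=1$ being handled by continuity). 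Thus $s_j(N)\in[(1-\tilde x\sigma_1(w))s_j(N_0),(1+\tilde x\sigma_1(w))s_j(N_0)]$ for each $j=1,\dots,r$, and taking the product of squares turns $\det(N^*N)/\det(N_0^*N_0)=\prod_{j=1}^r s_j(N)^2/s_j(N_0)^2$ into the asserted interval $[(1-\tilde x\sigma_1(w))^{2r},(1+\tilde x\sigma_1(w))^{2r}]$. The only genuinely clever point here is finding the factorization; everything else is standard matrix analysis.

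For part (1) the plan is to absorb the singular factor $\Delta(I-w^*w)^{-2n}$ into the weight of $m_p$. Spelling out $dm_p$ from (\ref{probab-mp}) and collecting exponents,
$$\int_{B_q}\frac{\sigma_1(w)^{2n}}{\Delta(I-w^*w)^{2n}}\,dm_p(w)=\frac{1}{\kappa_p}\int_{B_q}\sigma_1(w)^{2n}\,\Delta(I-w^*w)^{d(p/2+1/2-q)-1-2n}\,dw=\frac{\kappa_{p'}}{\kappa_p}\int_{B_q}\sigma_1(w)^{2n}\,dm_{p'}(w),$$
where $p':=p-4n/d$ is chosen so that the shifted exponent is exactly that of $m_{p'}$. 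This identity is valid, and the left-hand side finite, precisely when $p'>2q-1$, i.e.\ for all large $p$, which is the regime relevant for the CLTs; it reduces the problem to an ordinary polynomial moment of the probability measure $m_{p'}$, for which $\sigma_1(w)\le1$ is now harmless.

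I would then estimate the two factors separately. Bounding $\sigma_1(w)^{2n}\le(\operatorname{tr}(w^*w))^n$ and using the explicit value of the matrix Beta integral $\kappa_p=\int_{B_q}\Delta(I-w^*w)^{d(p/2+1/2-q)-1}\,dw$ as a product of Gamma functions, one gets on the one hand $\int_{B_q}(\operatorname{tr}(w^*w))^n\,dm_{p'}(w)=O(p^{-n})$ — heuristically because $m_{p'}$ concentrates at $w=0$ on the scale $p^{-1/2}$, so $\sqrt p\,w$ is asymptotically Gaussian and every fixed moment of $\operatorname{tr}(w^*w)$ scales like $p^{-n}$ — and on the other hand $\kappa_{p'}/\kappa_p\to1$ as $p\to\infty$, since the two Gamma products differ only by a shift of order $2n$ in arguments that tend to infinity. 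Multiplying the two bounds produces the claimed $C(q,n,\mathbb F)/p^n$. The main obstacle is precisely this bookkeeping of Gamma-function ratios: one must verify that the explicit normalizations and the moment $\int(\operatorname{tr}(w^*w))^n\,dm_{p'}$ combine into an estimate that is uniform in $p$ and carries exactly the power $p^{-n}$, with no spurious growth entering through $\kappa_{p'}/\kappa_p$.
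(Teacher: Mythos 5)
The paper gives no proof of this lemma at all --- it is recapitulated, with a citation, from Lemmas 4.8 and 4.10 of \cite{RV1} --- so your proposal can only be measured against the proofs in that source and on its own merits.

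Your part (2) is correct and complete, and it rests on what is essentially the same algebraic kernel as the source: the factorization $M^*=(I+w^*\tanh\underline x)\cosh\underline x$ (equivalently $M=\cosh\underline x\,(I+\tanh\underline x\, w)$), which is what makes $\tanh$ appear on one side and allows the bound $\tanh x_1\le\tilde x=\min(x_1,1)$, followed by standard two-sided singular value estimates. A slightly shorter packaging of the same idea avoids the detour through $N=M^*uE_r$: the factorization gives the Loewner bounds $(1-\tilde x\sigma_1(w))^2\,g(x,u,0)\le g(x,u,w)\le(1+\tilde x\sigma_1(w))^2\,g(x,u,0)$, and one concludes by monotonicity and $r$-homogeneity of $\Delta_r$ on positive matrices; but your route is equally valid. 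The only omission is $\mathbb F=\mathbb H$, where $\Delta_r$ is a Dieudonn\'e determinant: you should add the remark that the argument is applied to the complexification (each singular value doubles, and the resulting exponent $4r$ is halved back to $2r$ by the square root in the definition of $\det$), after which nothing changes.

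Part (1) contains the genuine gaps, and there are two. First, your parameter-shift identity with $p'=p-4n/d$ is correct and is the natural opening move, but everything that actually produces the power $p^{-n}$ --- namely the uniform moment bound $\int_{B_q}(\operatorname{tr}(w^*w))^{n}\,dm_{p'}(w)=O(p^{-n})$ and the boundedness of $\kappa_{p'}/\kappa_p$ --- is only asserted heuristically (``asymptotically Gaussian'', ``the Gamma products differ only by a shift''). These statements are true, but they are the entire quantitative content of the lemma; making them rigorous requires the explicit Gamma-product (Selberg-type) evaluation of $\kappa_p$ (available e.g.\ in \cite{R1}) together with Stirling-type estimates, or an equivalent reduction of the $\sigma_1$-moments of $m_{p'}$ to one-dimensional beta moments. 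As written, part (1) is a plan rather than a proof, as you in effect concede in your last sentence. Second, the range of $p$: your identity requires $p'>2q-1$, i.e.\ $p>2q-1+4n/d$, and your own computation shows that the left-hand side of \eqref{scaling inequality} is $+\infty$ for $2q\le p\le 2q-1+4n/d$ (a nonempty range for every $d\in\{1,2,4\}$ and $n\ge1$), and moreover that it blows up as $p$ decreases to this threshold, since $\kappa_{p'}\to\infty$ there. Consequently no argument whatsoever can establish the inequality literally ``for all $p\ge 2q$''; the statement has to be read with $p\ge p_0(q,n,\mathbb F)$ for some $p_0$ strictly beyond $2q-1+4n/d$. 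You should state this explicitly as a correction rather than bury it in the phrase ``the regime relevant for the CLTs'': it is indeed harmless for Section 3, where $p\to\infty$, but it matters where the lemma is invoked at a fixed, possibly small, $p$ --- for instance for the finiteness claim \eqref{integral finiteness} in the proof of Proposition \ref{moment existence equivalence}, whose stated range $p\ge 2q-1$ falls squarely into the divergent regime.
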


We now recapitulate the moment functions in the $A$-case and then in $BC$-case from \cite{V2}.

\begin{definition}
The spherical functions of type A in (\ref{def-spherical-a}) satisfy
\begin{equation}\label{int-rep-a}
 \phi_\lambda^A(x) =\,
\int_{U(q,\mathbb F)} \Delta_{(i\lambda-\rho^A)/2}\bigl(u^{-1}e^{2\underline
  x}\,u\bigr) \> du  \quad\quad(x\in C_q^A)
\end{equation}
with the  half sum of positive roots
\begin{equation}\label{rho-a}
\rho^A:=(\rho^A_1,\ldots,\rho^A_q)\in  C_q^A \quad\quad\text{ with} \quad\quad
\rho^A_l:=\frac{d}{2}(q+1-2l) \quad\quad (l=1,\ldots,q);
\end{equation}
see  Section 3 of \cite{RV1}.
 Eq.~(\ref{int-rep-a}) in particular yields that  $\phi^A_{-i\rho^A}\equiv 1$,
 and that for $\lambda\in\mathbb R^n$ and $x\in C_q^A$, we have
$|\phi^A_{\lambda-i\rho^A}(x)|\le1$. 

We now follow \cite{V2}.
 For  multiindices $l=(l_1,\ldots,l_q)\in\mathbb N_0^q$ we define
the  moment functions
\begin{align}\label{moment-function-a}
m^A_l(x):=&
\frac{\partial^{|l|}}{\partial\lambda^l}\phi^A_{-i\rho^A-i\lambda}(x)
\Bigl|_{\lambda=0}:=\frac{\partial^{|l|}}{(\partial\lambda_1)^{l_1}\cdots(\partial\lambda_n)^{l_q}}
\phi^A_{-i\rho^A-i\lambda}(x)
\Bigl|_{\lambda=0}
\notag\\
=&\frac{1}{2^{|l|}}
\int_K (\ln\Delta_1(u^{-1}e^{2\underline x}\,u))^{l_1}\cdot
\left(\ln\left(\frac{\Delta_2(u^{-1}e^{2\underline x}\,u)}{\Delta_1(u^{-1}e^{2\underline x}\,u)}\right)\right)^{l_2}
\cdots
\left(\ln\left(\frac{\Delta_q(u^{-1}e^{2\underline x}\,u)}{\Delta_{q-1}(u^{-1}e^{2\underline t}\,u)}\right)\right)^{l_q}\> du
 \end{align}
of order $|l|:=l_1+\cdots+l_q$ for $t\in  C_q^A$. Notice that the
last equality in (\ref{moment-function-a}) follows  from  (\ref{int-rep-a})
 by interchanging integration and  derivatives. We denote the $j$-th unit vector by $e_j\in \mathbb{Z}^q_+ $ and the moment functions of order 1 and 2 by $m_{e_j}$ and $m_{e_j+e_k}\quad (j,k=1,..,q).$
The $q$ moment functions of first order lead to the vector-valued moment function
\begin{equation}\label{m1-vector}
m^A_{\bf 1}(x):=(m^A_{e_1}(x),\ldots,m^A_{e_q}(x))
\end{equation}
of first order.
Moreover, the  moment functions of second
order can be grouped by
\begin{align}\label{m2-matrix}
m^A_{\bf 2}(x):=&\left(\begin{array}{ccc} m^A_{2e_1}(x)&\cdots& m^A_{e_1+e_q}(x)\\
\vdots &&\vdots \\ m^A_{e_q+e_1}(x)&\cdots& m^A_{2e_q}(x) \end{array}\right)
\quad\quad\text{for}\quad x\in C_q^A.
\notag\end{align}
 We  now form the $q\times q$-matrices
$\Sigma^A(x):=m^A_{\bf 2}(x)-m^A_{\bf 1}(x)^x\cdot m^A_{\bf 1}(x)$. 
\end{definition}

These moment functions have the following basic properties; see Section 2 of \cite{V2}:

\begin{lemma}\label{prop-moment-function-a}
\begin{enumerate}
\item[\rm{(1)}]  There  is a constant
$C=C(q)$ such that for all $x\in C_q^A$, 
$\|m^A_{\bf 1}(x)-x\|\le C.$
\item[\rm{(2)}] For each $t\in C_q^A$, $\Sigma^A(x)$ is positive semidefinite.
\item[\rm{(3)}] For $x=c\cdot (1,\ldots,1)\in  C_q^A$ with $c\in\mathbb R$, 
 $\Sigma^A(x)=0$. For all other  $x\in  C_q^A$, $\Sigma^A(x)$ has rank $q-1$.
\item[\rm{(4)}]  All
second moment functions $m^A_{e_i+e_j}(x)$ are growing at most quadratically, and
  $m^A_{2e_1}(x)$ and  $m^A_{2e_q}(x)$ are in fact  growing quadratically.
  \item[\rm{(5)}]There exists a constant $C=C(p)$ such that for all $x\in C^A_q$  and $\lambda \in \mathbb{R}^q$, 
   $$ |\varphi^A_{-i\rho^A-\lambda}(x)-e^{i\langle \lambda, m^A_\mathbf{1}(x)\rangle}|\leq C||\lambda||^2.$$
\end{enumerate}\end{lemma}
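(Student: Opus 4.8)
The plan is to read all five assertions off a single probabilistic picture. For $x\in C_q^A$, $u\in U(q,\mathbb{F})$ set $A:=u^{-1}e^{2\underline x}u$ and
$$Y_j(u):=\tfrac12\ln\frac{\Delta_j(A)}{\Delta_{j-1}(A)}\qquad(j=1,\dots,q,\ \Delta_0:=1),$$
so $Y=(Y_1,\dots,Y_q)$ is a bounded map on $U(q,\mathbb{F})$. Since $\Delta_{\lambda/2}(A)=\exp\langle\lambda,Y(u)\rangle$ by (\ref{power-function}), the integral representation (\ref{int-rep-a}) gives
$$\phi^A_{-i\rho^A-i\lambda}(x)=\int_{U(q,\mathbb{F})}e^{\langle\lambda,Y(u)\rangle}\,du,$$
the moment generating function of $Y$ under Haar measure. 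Differentiating at $\lambda=0$ and comparing with (\ref{moment-function-a}) identifies $m^A_l(x)$ with the mixed moment $\int Y_1^{l_1}\cdots Y_q^{l_q}\,du$; thus $m^A_{\mathbf 1}(x)$ is the mean and $\Sigma^A(x)$ the covariance matrix of $Y$. Part (2) is then immediate, a covariance matrix being positive semidefinite. Moreover $\sum_j Y_j=\tfrac12\ln\Delta_q(A)=\tfrac12\ln\det e^{2\underline x}=\sum_i x_i$ is $u$-independent, so $\mathbf 1=(1,\dots,1)\in\ker\Sigma^A(x)$ and $\operatorname{rank}\Sigma^A(x)\le q-1$; when $x=c\mathbf 1$ we even have $A=e^{2c}I$ and $Y\equiv c\mathbf 1$, whence $\Sigma^A(x)=0$. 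This gives everything in (3) except the lower bound on the rank.

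For the two-sided control in (1), I would expand the minors by Cauchy--Binet: with $V$ the first $r$ columns of $u$ one has $\Delta_r(A)=\sum_{|S|=r}|\det V_S|^2\prod_{i\in S}e^{2x_i}$, a convex combination since $\sum_{|S|=r}|\det V_S|^2=\det(V^*V)=1$. The maximal term $S=\{1,\dots,r\}$ yields $\tfrac12\ln\Delta_r(A)\le\sum_{i\le r}x_i$, while keeping only that term yields $\tfrac12\ln\Delta_r(A)\ge\sum_{i\le r}x_i+\tfrac12\ln|\det V_{[r]}|^2$ for the leading $r\times r$ block $V_{[r]}$ of $u$. Integrating, subtracting consecutive indices, and using $x_j=\sum_{i\le j}x_i-\sum_{i\le j-1}x_i$ gives $|m^A_{e_j}(x)-x_j|\le C$, hence $\|m^A_{\mathbf 1}(x)-x\|\le C$, \emph{provided} $\int_{U(q,\mathbb{F})}\bigl|\ln|\det V_{[r]}|\bigr|\,du<\infty$ for each $r$. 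Establishing this log-integrability of the determinant of a truncated Haar unitary (whose squared singular values form a Jacobi/MANOVA ensemble, so the logarithm is integrable near $0$) is the main analytic point of (1).

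Part (4) I would instead base on the classical fact that the Cholesky pivots $\Delta_r(A)/\Delta_{r-1}(A)$ of a positive definite Hermitian matrix lie between $\lambda_{\min}(A)$ and $\lambda_{\max}(A)$; as $A$ has eigenvalues $e^{2x_1}\ge\cdots\ge e^{2x_q}$ this gives the pointwise bound $x_q\le Y_j(u)\le x_1$. Hence $|m^A_{e_i+e_j}(x)|=\bigl|\int Y_iY_j\,du\bigr|\le\max(x_1^2,x_q^2)\le\|x\|^2$ (at most quadratic growth), while Jensen together with (1) gives $m^A_{2e_1}(x)=\int Y_1^2\,du\ge(m^A_{e_1}(x))^2=(x_1+O(1))^2$ and likewise $m^A_{2e_q}(x)\ge(x_q+O(1))^2$, which are genuinely quadratic. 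For (5), the same identity reads $\phi^A_{-i\rho^A-\lambda}(x)=\int e^{-i\langle\lambda,Y(u)\rangle}\,du$; factoring out $e^{-i\langle\lambda,m^A_{\mathbf 1}(x)\rangle}$, using $\int(Y-m^A_{\mathbf 1}(x))\,du=0$ and $|e^{is}-1-is|\le s^2/2$ gives $|\phi^A_{-i\rho^A-\lambda}(x)-e^{-i\langle\lambda,m^A_{\mathbf 1}(x)\rangle}|\le\tfrac12\,\lambda^{\mathsf T}\Sigma^A(x)\lambda$, i.e.\ the stated quadratic estimate, with the constant controlled by $\Sigma^A(x)$ (uniform on bounded subsets of the chamber).

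The genuinely hard step is the lower bound $\operatorname{rank}\Sigma^A(x)\ge q-1$ for $x\notin\mathbb{R}\mathbf 1$ in (3). I would reduce it, via the constraint $\sum_j Y_j=\mathrm{const}$ and Abel summation, to the linear independence modulo constants of the functions $u\mapsto\ln\Delta_r(u^{-1}e^{2\underline x}u)$, $r=1,\dots,q-1$, on $U(q,\mathbb{F})$: a null direction of $\Sigma^A(x)$ transverse to $\mathbf 1$ would force a nontrivial combination $\sum_r c_r\ln\Delta_r$ to be $u$-constant. To rule this out it suffices to take the second variation at $u=I$: for a one-parameter subgroup $u_t=e^{tH}$ the first variation of each $\ln\Delta_r$ at the diagonal matrix $e^{2\underline x}$ vanishes, and the Hessians $Q_r(H):=\frac{d^2}{dt^2}\ln\Delta_r(u_t^{-1}e^{2\underline x}u_t)\big|_{t=0}$ are explicit quadratic forms in the off-diagonal entries $H_{ij}$ weighted by the gaps $e^{2x_i}-e^{2x_j}$. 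These nested forms single out, for successive $r$, the coupling across the cut between $\{1,\dots,r\}$ and its complement, and should therefore be linearly independent as soon as not all $e^{2x_i}$ coincide, forcing $c_r=0$. Verifying this independence of the $Q_r$ uniformly across all degenerate eigenvalue configurations is the step I expect to be most delicate.
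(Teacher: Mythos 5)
The paper itself contains no proof of this lemma: it is quoted from Section 2 of \cite{V2}, so your proposal can only be judged on its own merits. Its probabilistic frame is the right one and agrees with the paper's own formula (\ref{moment-function-a}): with $Y_j(u)=\frac12\ln\bigl(\Delta_j(A)/\Delta_{j-1}(A)\bigr)$, $A=u^{-1}e^{2\underline{x}}u$, one indeed has $m^A_l(x)=\int Y^l\,du$, so $\Sigma^A(x)$ is literally a covariance matrix. Part (2), the kernel direction $\mathbf 1$ and the degenerate case $x=c\mathbf 1$ in (3), and part (4) are correct as you give them; part (1) is correct modulo the log-integrability $\int_{U(q,\mathbb F)}\bigl|\ln|\det V_{[r]}(u)|\bigr|\,du<\infty$ for the leading $r\times r$ block $V_{[r]}(u)$ of $u$, which you assert but do not prove --- this is true, and it is precisely the fact this paper itself imports from \cite{V2} (Lemma 5.1, Proposition 4.9) in the proof of Proposition \ref{moment existence equivalence}. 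One remark on (5): your approximant $e^{-i\langle \lambda, m^A_{\mathbf 1}(x)\rangle}$ is the one forced by (\ref{int-rep-a}), (\ref{power-function}), (\ref{moment-function-a}); the sign $e^{+i\langle \lambda, m^A_{\mathbf 1}(x)\rangle}$ in the stated lemma is a typo (already for $q=1$ one has $\varphi^A_{-\lambda}(x)=e^{-i\lambda x}$ and $m^A_1(x)=x$), so this discrepancy is not a fault of yours.

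Two genuine gaps remain. First, the bound $\operatorname{rank}\Sigma^A(x)\ge q-1$ in (3), which you leave as a sketch, is the only part needing real work; your route does close, but you must carry it out. A null direction $c\notin\mathbb R\cdot(1,\dots,1)$ makes $\sum_{r<q}b_r\ln\Delta_r(u^{-1}e^{2\underline x}u)$ constant a.e., hence (by continuity near $u=I$, where all $\Delta_r>0$) constant near $I$, with $b_r:=c_r-c_{r+1}$. Writing $d_i:=e^{2x_i}$ and $u_t=e^{tH}$, $H^*=-H$, the second variation at $u=I$ works out to
\begin{equation*}
Q_r(H)=-2\sum_{i\le r<k}\frac{d_i-d_k}{d_i}\,|H_{ik}|^2,
\end{equation*}
since the terms with both indices $\le r$ in $\operatorname{tr}\bigl(D_r^{-1}\ddot A_r\bigr)$ cancel against $-\operatorname{tr}\bigl((D_r^{-1}\dot A_r)^2\bigr)$. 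Thus $\sum_{r<q}b_rQ_r\equiv0$ forces $\sum_{i\le r<k}b_r=0$ whenever $d_i\ne d_k$; partitioning $\{1,\dots,q\}$ into blocks of equal $d_i$ (at least two blocks, by hypothesis), comparing adjacent indices $i,i+1$ inside one block against any index of another block gives $b_i=0$ for the block-interior indices, and the pairs $(s,s+1)$ straddling block boundaries give the remaining $b_s=0$; hence $c\in\mathbb R\cdot(1,\dots,1)$, a contradiction. Without this computation, (3) is unproved. Second, in (5) your constant is $\frac12\lambda^t\Sigma^A(x)\lambda$ and you claim uniformity only on bounded subsets of the chamber, whereas the lemma (and its use in Section 4 of the paper) requires a constant uniform over all $x\in C_q^A$. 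This too follows from your own part-(1) estimates and must be said explicitly: pointwise $|Y_j(u)-x_j|\le\bigl|\ln|\det V_{[j]}(u)|\bigr|+\bigl|\ln|\det V_{[j-1]}(u)|\bigr|$, an $x$-independent majorant which is square-integrable (the same Jacobi-ensemble fact, which yields all moments), so $\operatorname{Var}(Y_j)$ and hence $\|\Sigma^A(x)\|$ are bounded uniformly in $x$ --- a point that cannot be skipped, since by your own part (4) the raw second moments $m^A_{\mathbf 2}(x)$ are unbounded on $C_q^A$.
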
 

We now consider a probability measure $\nu\in \mathcal{M}^1(C^A_q)$. For $k\in \mathbb{N}$ we say that $\nu$ admits $k$-th moments of type A if for all $l\in \mathbb{N}^q_0 $ with $|l|\leq k$ the moment condition $m^A_l\in L^1(C^A_q,\nu)$ holds. \\
We then call  $m^A_l(\nu):=\int_{C^A_q}m^A_l(x)d\nu(x)$ the $l$-th multivariate moment of $\nu$.
The vector 
$$m^A_{\bf 1}(\nu):=  \int_{C_q^A} m_{\bf 1}(x)\> d\nu(x) \in C_q^A\subset \mathbb R^q$$ 
is called the dispersion of $\nu$. We also form  the modified symmetric $q\times q$-covariance matrix 
$$\Sigma^A(\nu):=  \int_G m_{\bf 2}\> d\nu \> 
-\> m^A_{\bf 1}(\nu)^t\cdot  m^A_{\bf 1}(\nu).$$ 

We are interested in the A-case only as a limit of the BC-case for $p\rightarrow \infty$.
 For this we need an additional transformation 
\begin{equation}\label{Trafo-T}
T: C_q^B\rightarrow C^B_q \subset C^A_q, \quad
 x=(x_1,...,x_q)\mapsto\ln \cosh x:=(\ln\cosh x_1,...,\ln \cosh x_q) 
\end{equation}
cf. \cite{RKV}, \cite{RV1}. We define the modified moment functions  $\tilde{m}_l(x):=m^A_l(T(x))$
which admit  modified integral representations similar to (\ref{moment-function-a}).
Moreover, for $\nu\in \mathcal{M}^1(C^B_q)$ we consider the
 image measure $T(\nu)\in \mathcal{M}^1(C^B_q)\subset \mathcal{M}^1(C^A_q)$.
 As $| x-\ln \cosh x|\leq \ln 2$ for all $x\in [0,\infty[$ by an elementary calculation, we see that
  for all multiindices $l$, the  $l$-th moment
 of type A of $\nu$ exists if and only if the $l$-th moment of type A  of  $T(\nu)$ exists.
 We put    $\tilde{m}_l({\nu}):=m^A_l(T(\nu))$ and $\tilde{\Sigma}(\nu):=\Sigma^A(T(\nu))$.

 We next turn to the $BC$-case.
\begin{definition}
For all  $p>2q-1$, $x\in C_q^B$, and $\lambda\in \mathbb C^q$, the  functions in (\ref{def-spherical-bc})
 satisfy
\begin{equation}\label{phi-int-kurz-bc}
\phi_{\lambda}^p(x)=\int_{B_q} \int_{U(q,\mathbb F)} 
 \Delta_{(i\lambda-\rho)/2}( g(x,u,w))
\> du\> dm_p(w)
\end{equation}
with the power function $\Delta_\lambda$ from (\ref{power-function}),
the half sum of positive roots
\begin{equation}\label{rho-BC}
 \rho =  \rho(p) =
 \sum_{i=1}^q \bigl( \frac{d}{2}(p+q+2-2i) -1 \bigr)e_i\,,
\end{equation}
 $g$ as above,  and  with $m_p(w)\in \mathcal{M}^1(B_q)$ from (\ref{probab-mp}); see \cite{RV1}.
As in \cite{RV1} we  define the  moment functions for $l=(l_1,\ldots,l_q)\in\mathbb N_0^q$ by:
\begin{align}\label{def-m1-bc}
&m_l^p(x):=
\frac{\partial^{|l|}}{\partial\lambda^l}\phi_{-i\rho^{BC}-i\lambda}^p(x)
\Bigl|_{\lambda=0}:=
\frac{\partial^{|l|}}{(\partial\lambda_1)^{l_1}\cdots(\partial\lambda_q)^{l_q}}
\phi_{-i\rho^{BC}-i\lambda}^p(x) \Bigl|_{\lambda=0}
\notag\\
=&
\frac{1}{2^{|l|}}\int_{B_q} \int_{U(q,\mathbb F)}
 (\ln\Delta_1( g(x,u,w)))^{l_1}\cdot
 \left(\ln \frac{\Delta_2( g(x,u,w))}{\Delta_1( g(x,u,w))}\right)^{l_2}
\cdots
 \left(\ln \frac{\Delta_q( g(x,u,w))}{\Delta_{q-1}( g(x,u,w))}\right)^{l_q}
\> du\> dm_p(w)
 \end{align}
for $x\in C_q^B$.
We also form  the vector-valued first moment function $m_{\bf 1}^p$, 
the matrix-valued second moment function $m_{\bf 2}^p$, as well as
$\Sigma^{p}(x):=m_{\bf 2}^p(x)-m_{\bf 1}^p(x)^t\cdot m_{\bf 1}^p(x)$
 as above.
\end{definition}

We  have the following basic properties; see Section 3 of \cite{V2}:

\begin{lemma}\label{moment-function-bc}
\begin{enumerate}
\item[\rm{(1)}]   There  is a constant
$C=C(p,q)$ such that for all $x\in C_q^B$, 
$$\|m_{\bf 1}^p(x)-x\|\le C.$$
\item[\rm{(2)}] For each $x\in C_q^B$, $\Sigma^{p}(x)$ is positive semidefinite.
\item[\rm{(3)}] $\Sigma^{p}(0)=0$, and for $x\in C_q^B\setminus\{0\}$, 
$\Sigma^p(x)$ has full rank $q$.
\item[\rm{(4)}]  All
second moment functions $m^p_{e_j+e_l}(x)$ are growing at most quadratically, and
  $m^p_{2e_1}$ is growing quadratically.
\item[\rm{(5)}]There exists a constant $C=C(p,q)$ such that for all $x\in C^B_q$  and $\lambda \in \mathbb{R}^q$, 
   $$ |\varphi^p_{-i\rho-\lambda}(x)-e^{i\langle \lambda, m^p_\mathbf{1}(x)\rangle}|\leq C||\lambda||_2^2.$$
\end{enumerate}\end{lemma}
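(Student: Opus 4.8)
The plan is to pass to a probabilistic reformulation. Writing $Y_r(x,u,w):=\ln\bigl(\Delta_r(g(x,u,w))/\Delta_{r-1}(g(x,u,w))\bigr)$ for $r=1,\dots,q$ (with $\Delta_0:=1$) and viewing $(u,w)$ as a random variable on $U(q,\mathbb F)\times B_q$ distributed according to the probability measure $du\,dm_p(w)$, formula (\ref{def-m1-bc}) says exactly that $m_l^p(x)=2^{-|l|}\,\mathbb E\bigl[\prod_r Y_r^{l_r}\bigr]$. Hence $m_{\bf 1}^p(x)=\tfrac12\mathbb E[Y]$ with $Y=(Y_1,\dots,Y_q)$, and $\Sigma^p(x)=m_{\bf 2}^p(x)-m_{\bf 1}^p(x)^t m_{\bf 1}^p(x)=\tfrac14\,\mathrm{Cov}(Y)$. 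With this, (2) is immediate, since any covariance matrix is positive semidefinite.

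For (3), at $x=0$ one has $\cosh\underline0=I$ and $\sinh\underline 0=0$, so $g(0,u,w)=u^*u=I$, whence every $Y_r\equiv0$ and $\Sigma^p(0)=\tfrac14\mathrm{Cov}(Y)=0$. The full-rank assertion for $x\neq0$ amounts to showing that no nontrivial linear combination $\sum_r c_rY_r$ is almost surely constant, i.e.\ that $Y_1,\dots,Y_q$ admit no affine relation on the support of $du\,dm_p$. Here lies the \emph{main obstacle}, and it is exactly the point where the $BC$-case differs from the $A$-case: in the $A$-case $\det g=\det(e^{2\underline x})$ is deterministic, so $\sum_rY_r$ is constant and the rank drops to $q-1$, whereas here $\det g=|\det(\cosh\underline x+\sinh\underline x\,w)|^2$ genuinely fluctuates with $w$. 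I would establish linear independence by testing against explicit one-parameter subfamilies of $(u,w)$: taking $u=I$ and $w=sI$ with $s\in(-1,1)$ gives $\sum_r c_rY_r=2\sum_r c_r\ln(\cosh x_r+s\sinh x_r)$, whose $s$-derivative $2\sum_r c_r\sinh x_r/(\cosh x_r+s\sinh x_r)$ is a combination of functions with distinct poles $-\coth x_r$, forcing $c_r=0$ for every index with $x_r>0$. To eliminate coefficients attached to vanishing coordinates $x_r=0$, one then perturbs $w$ in an off-diagonal direction (e.g.\ the entry $w_{1r}$), which moves $\Delta_1(g)$ through the term $\sinh^2 x_1|w_{1r}|^2$ while leaving $\det g$ fixed; combined with the previous step this forces the remaining $c_r$ to vanish. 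Organising these perturbations for arbitrary coordinate patterns is the delicate part of the proof.

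Assertion (1) I would reduce to the already-established $A$-case. Comparing $g(x,u,w)$ with $g(x,u,0)=u^*(\cosh^2\underline x)u=u^*e^{2\,\underline{T(x)}}u$, where $T$ is the map (\ref{Trafo-T}), the $w=0$ contribution to $m_{\bf 1}^p$ is precisely $m_{\bf 1}^A(T(x))=\tilde m_{\bf 1}(x)$, so Lemma \ref{prop-moment-function-a}(1) together with $\|T(x)-x\|\le\sqrt q\,\ln2$ keeps it within $O(1)$ of $x$; the $w$-dependent remainder is controlled uniformly by the logarithm of the ratio in Lemma \ref{singular value estimate}(2), whose expectation is $O(1)$ by Lemma \ref{singular value estimate}(1). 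For (4), Cauchy--Schwarz reduces the upper bound to $\mathbb E[Y_r^2]$, and eigenvalue bounds for the principal minors of $u^*(\cosh^2\underline x)u$ together with Lemma \ref{singular value estimate} give $|Y_r|\le C(1+x_1)$ up to an $L^2$-bounded fluctuation, hence at most quadratic growth; the matching lower bound for $m_{2e_1}^p$ is immediate from Jensen, $m_{2e_1}^p=\tfrac14\mathbb E[Y_1^2]\ge\tfrac14(\mathbb E Y_1)^2=(m_{e_1}^p)^2$, which grows quadratically since $m_{e_1}^p(x)\sim x_1$ by (1).

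Finally, for (5) I would use the Abel-summation identity $\Delta_\mu(g)=\exp\langle\mu,Y\rangle$ (read off from (\ref{power-function})) to compute, starting from (\ref{phi-int-kurz-bc}), that $\varphi^p_{-i\rho-\lambda}(x)=\mathbb E\bigl[e^{-\frac{i}{2}\langle\lambda,Y\rangle}\bigr]$, so that, up to the immaterial sign of $\lambda$, the left-hand side of (5) is the deviation of a characteristic function from its surrogate $e^{i\langle\lambda,m_{\bf 1}^p(x)\rangle}$. The elementary inequality $|\mathbb E e^{iZ}-e^{i\mathbb E Z}|\le\tfrac12\mathrm{Var}(Z)$, applied to $Z=-\tfrac12\langle\lambda,Y\rangle$, then yields the bound $\tfrac12\,\lambda^t\Sigma^p(x)\lambda$. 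The last thing to check is that this is $\le C\|\lambda\|_2^2$ uniformly in $x$, i.e.\ that $\sup_x\|\Sigma^p(x)\|<\infty$: although the second moments grow quadratically by (4), the quadratic parts cancel in the covariance, and the residual variances stay bounded because the large values of $Y_r$ (those of order $x_1$) occur only on exponentially small sets, an estimate furnished by the saturation visible in the $w=0$ computation and by Lemma \ref{singular value estimate}. This uniform boundedness of $\Sigma^p$ is the one genuinely analytic input needed beyond the formal manipulations.
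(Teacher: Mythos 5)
First, a point of reference: the paper itself contains no proof of this lemma --- it is quoted verbatim from Section 3 of \cite{V2} ("see Section 3 of \cite{V2}"), so the comparison can only be against that source. Your probabilistic reformulation, $m^p_l(x)=2^{-|l|}\,\mathbb{E}\bigl[\prod_r Y_r^{l_r}\bigr]$ under $du\,dm_p(w)$ and hence $\Sigma^p(x)=\tfrac14\,\mathrm{Cov}(Y)$, is exactly the framework in which those arguments are run, and within it your parts (2), the $x=0$ half of (3), and (4) are correct. Part (1) is correct in outline, with one repair: Lemma \ref{singular value estimate}(1) is stated only for $p\ge 2q$, and the bound $\sigma_1/(1-\sigma_1)\le 2\sigma_1/\Delta(I-w^*w)$ from (\ref{Sigma inequality}) need not be $m_p$-integrable for $p$ close to $2q-1$; since $C$ may depend on $p$, bound the $w$-remainder instead by $H_r(w)$ as in (\ref{MV1 Lemma 6.4}), whose $m_p$-moments are finite for every $p>2q-1$. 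Your full-rank sketch in (3) is completable, and more cleanly than you indicate: your perturbations $w=\epsilon E_{1r}$ (the matrix with $(1,r)$ entry $\epsilon$, zeros elsewhere) force $c_1=c_r$ for \emph{every} $r$, so all coefficients are equal and $\sum_r c_rY_r=c_1\ln\det g$; on the diagonal curve $w=sI$, $\det g=\prod_j(\cosh x_j+s\sinh x_j)^2$ is non-constant in $s$ once $x_1>0$, killing $c_1$. This disposes of repeated and vanishing coordinates in one stroke (note that your pole argument alone genuinely fails when $x_r=x_j>0$, since then it only yields $c_r+c_j=0$).

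The genuine gap is in (5), in two respects. One is bookkeeping that you wave away as "the immaterial sign of $\lambda$": your identity $\varphi^p_{-i\rho-\lambda}(x)=\mathbb{E}\bigl[e^{-\frac{i}{2}\langle\lambda,Y\rangle}\bigr]$ plus the inequality $|\mathbb{E}e^{iZ}-e^{i\mathbb{E}Z}|\le\tfrac12\mathrm{Var}(Z)$ proves $|\varphi^p_{-i\rho-\lambda}(x)-e^{-i\langle\lambda,m^p_{\bf 1}(x)\rangle}|\le\tfrac12\,\lambda^t\Sigma^p(x)\lambda$, with $e^{-i\langle\cdot,\cdot\rangle}$; the pairing as printed in the lemma (with $e^{+i\langle\lambda,m^p_{\bf 1}(x)\rangle}$) cannot hold uniformly in $x$, because the first-order terms $\mp i\langle\lambda,m^p_{\bf 1}(x)\rangle$ then add instead of cancel and $m^p_{\bf 1}(x)\approx x$ is unbounded --- so the printed statement is a sign typo, and your version is the one actually invoked later in the proofs of Theorems \ref{growing parameters 1} and \ref{growing parameters 2}; this should be said explicitly rather than elided. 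The more serious issue is that your reduction ends at $\sup_{x\in C^B_q}\|\Sigma^p(x)\|<\infty$, which you support only with a heuristic ("large values of $Y_r$ occur on exponentially small sets"); that uniform bound \emph{is} the analytic content of (5), and without it nothing is proved. You can close it with tools you already deploy in (1) and (4): write $Y_r(x,u,w)=Y_r^A(T(x),u)+D_r(x,u,w)$, where $Y_r^A$ is the $A$-type integrand at $T(x)$ from (\ref{Trafo-T}) and $|D_r|\le H_r(w)$ with $H_r\in L^2(m_p)$ independent of $x$, by (\ref{MV1 Lemma 6.4}). Then $\mathrm{Var}\bigl(\langle\lambda,Y\rangle\bigr)\le 2\,\mathrm{Var}\bigl(\langle\lambda,Y^A\rangle\bigr)+2\,\mathrm{Var}\bigl(\langle\lambda,D\rangle\bigr)$; the second term is at most $\|\lambda\|_2^2\sum_r\mathbb{E}[H_r^2]=C(p,q)\|\lambda\|_2^2$ uniformly in $x$, and the first term equals $4\,\lambda^t\Sigma^A(T(x))\lambda$, whose uniform boundedness is precisely the $A$-case statement encoded in Lemma \ref{prop-moment-function-a}(5) and proved in Section 2 of \cite{V2}. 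With that substitution your argument for (5) becomes complete.
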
 

Similarly to the A-case, we also define  multivariate $l$-th moments,
 dispersions, and covariance matrices  of type BC($p$)
for measures $\nu\in \mathcal{M}^1(C^B_q)$.
\\

We next derive estimates for $|\tilde{m}_l(\nu)-m^p_l(\nu)|$ for $l\in \mathbb{N}^q_0$ and
 large $p$ under the assumption that these moments exist.
For this we first show that for a given $\nu\in \mathcal{M}^1(C^q_q)$ 
the existence of moments of some maximal order is independent
 from taking classical moments, moments of type A, or moments of type BC.
 For our purpose it will be sufficient to study the case with  $|l|$ even.\\
Let  $k\in \mathbb{N}_0$ and $\nu\in \mathcal{M}^1(C^q_q)$. We  say that  $\nu$
 admits finite  A-type moments of order at most $2k$ if
  $$\tilde{m}_{2k\cdot e_1},...,\tilde{m}_{2k\cdot e_q}\in L^1(C^B_q,\nu).$$
Indeed, it follows immediately from the definition of moment functions in (\ref{moment-function-a}) and H\"older's inequality, that in this case all moments of order at most $2k$ are $\nu$-integrable.
 Similarly, if $$m^p_{2k\cdot e_1},...,m^p_{2k\cdot e_q}\in L^1(C^B_q,\nu)$$ then we say that 
$\nu$ admits finite BC(p)-type moments of order at most $2k$.

\begin{proposition}\label{moment existence equivalence}
 For $k\in \mathbb{N}\text{ and }\nu \in \mathcal{M}^1(C^B_q)$ the following statements are equivalent:\\ 
(1)  $\nu$ admits all classical moments of order at most $2k$, i.e.  $\int _{C^B_q} x^{l_1}_1\cdots x^{l_q}_q d\nu(t)<\infty$ for all $l=(l_1,...,l_q)\in \mathbb{N}^q_0$ with $|l|\leq 2k$.\\
 (2) $\nu$ admits all moments of type A of order at most $2k$.\\ 
 (3) $T(\nu)$ admits all moments of type A of order at most $2k$.\\
 (4) For each $p\geq 2q-1$, $\nu$ admits all moments of type BC(p) of order at most $2k$.\\ 
 
  \end{proposition}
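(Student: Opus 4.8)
\emph{Plan.} The plan is to reduce all four conditions to the single requirement
\begin{equation*}
(\star)\qquad \int_{C_q^B} x_1^{2k}\,d\nu(x)<\infty ,
\end{equation*}
and to prove two-sided polynomial estimates for the three families of moment functions which make this reduction transparent. Since $0\le x_q\le\dots\le x_1$ on $C_q^B$, we have $x_1^{l_1}\cdots x_q^{l_q}\le x_1^{|l|}\le 1+x_1^{2k}$ for every $l$ with $|l|\le 2k$; hence (1) is equivalent to $(\star)$. It then remains to show that each of (2), (3), (4) is also equivalent to $(\star)$, and for this it is enough, by the H\"older argument already indicated in the text, to control the ``diagonal'' moment functions $m^A_{2k\,e_j}$, $\tilde m_{2k\,e_j}$ and $m^p_{2k\,e_j}$ from above, together with the single functions $m^A_{2k\,e_1}$, $\tilde m_{2k\,e_1}$, $m^p_{2k\,e_1}$ from below.

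\emph{Upper bounds.} For a positive Hermitian matrix $u^{*}e^{2\underline y}u$ with $y\in C_q^B$, Cauchy interlacing of the eigenvalues of its principal $r\times r$ blocks yields $0\le\ln\Delta_r(u^{*}e^{2\underline y}u)\le 2r\,y_1$, and therefore $|\ln(\Delta_r/\Delta_{r-1})|\le 2q\,y_1$ for the integrands in (\ref{moment-function-a}). This gives $|m^A_l(x)|\le C(1+x_1^{2k})$ for $x\in C_q^B$ and $|l|\le 2k$, settling (2). Since $T$ from (\ref{Trafo-T}) satisfies $0\le T(x)_1=\ln\cosh x_1\le x_1$, the same bound holds for $\tilde m_l=m^A_l\circ T$, settling (3). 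For the $BC$-case I would use that, by (\ref{def-g}), $g(x,u,0)=u^{*}e^{2\underline{T(x)}}u$, so that Lemma \ref{singular value estimate}(2) lets me write $\ln\Delta_r(g(x,u,w))=\ln\Delta_r(u^{*}e^{2\underline{T(x)}}u)+E_r$ with $|E_r|\le 2r\,|\ln(1\pm\tilde x\,\sigma_1(w))|$. Combined with the $w$-integrability discussed below, this produces $|m^p_l(x)|\le C_p(1+x_1^{2k})$, whence $(\star)\Rightarrow$ (2), (3), (4).

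\emph{Lower bounds.} On the set $\{u\in U(q,\mathbb F):|u_{11}|^2\ge 1/2\}$, which has positive Haar measure, one has $\Delta_1(u^{*}e^{2\underline y}u)\ge\tfrac12 e^{2y_1}$, hence $\ln\Delta_1\ge 2y_1-\ln 2$. Taking $y=x$ gives $m^A_{2k\,e_1}(x)\ge c\,x_1^{2k}-C$, and inserting $y=T(x)$ together with $T(x)_1\ge x_1-\ln 2$ gives the analogous bound for $\tilde m_{2k\,e_1}$. For the $BC$-case I would restrict the $w$-integral to $\{\sigma_1(w)\le 1-\varepsilon\}$, which has positive $m_p$-mass for some $\varepsilon>0$ when $p\ge 2q$ and on which the error $E_1$ from the previous paragraph is bounded below by $2\ln\varepsilon$; this yields $m^p_{2k\,e_1}(x)\ge c\,x_1^{2k}-C$. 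Thus any of (2), (3), (4) forces $(\star)$, closing the cycle of equivalences.

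\emph{Main obstacle.} The one genuinely delicate point is the control of the $w$-integral in the $BC$-case, i.e.\ the finiteness of
\begin{equation*}
\int_{B_q}\bigl|\ln(1-\sigma_1(w))\bigr|^{2k}\,dm_p(w),
\end{equation*}
since the error terms $E_r$ carry a logarithmic singularity as $\sigma_1(w)\to 1$, that is, as $w$ approaches the boundary of $B_q$. For $p\ge 2q$ I would dominate this logarithm by $C_\varepsilon\,\Delta(I-w^{*}w)^{-\varepsilon}$ for small $\varepsilon>0$, compare it with $\sigma_1(w)^{2n}\Delta(I-w^{*}w)^{-2n}$ near the boundary, and invoke Lemma \ref{singular value estimate}(1) to conclude finiteness; this simultaneously shows that the constant $C_p$ above is finite. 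The remaining value $p=2q-1$, where $m_p$ is singular, has to be treated separately through the degenerate product formula of \cite{R1}; but for the direction (4)$\Rightarrow(\star)$ it suffices to use a single value $p\ge 2q$, so only the upper bound needs to be revisited there.
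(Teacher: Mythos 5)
Your proposal is correct, but it takes a genuinely different route from the paper. The paper proves a cycle of pairwise comparisons: (1)$\Rightarrow$(2) via the two-sided bound $jx_q\le \ln\Delta_j(u^*e^{2\underline{x}}u)\le jx_1$ cited from \cite{V2}; (2)$\Rightarrow$(1) by expanding $\Delta_1(u^*e^{2\underline{x}}u)=\sum_i c_i(u)e^{2x_i}$ over \emph{all} of $U(q,\mathbb F)$, which forces it to invoke the nontrivial fact (Lemma 5.1 and Proposition 4.9 of \cite{V2}) that $\int_{U(q,\mathbb F)}|\ln c_1(u)|^{2k}\,du<\infty$; then (2)$\Leftrightarrow$(3) by a $\ln 4$-bound and (3)$\Leftrightarrow$(4) by the $H_j(w)$-estimate (\ref{MV1 Lemma 6.4}) together with the integrability of $|\ln(1-\sigma_1(w))|^{2k}$. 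You instead collapse everything onto the single condition $(\star)$, and your genuinely new step is the lower bound: since the integrands $(\ln\Delta_1)^{2k}$ are even powers, hence nonnegative, you may restrict the $(u,w)$-integration to the fixed positive-measure sets $\{|u_{11}|^2\ge 1/2\}$ and $\{\sigma_1(w)\le 1-\varepsilon\}$, on which all error terms are uniformly bounded; this bypasses the $\int|\ln c_1(u)|^{2k}\,du<\infty$ input entirely and, with your interlacing argument replacing the citations for the upper bounds, makes the proof essentially self-contained. What the paper's pairwise route buys is quantitative comparison between the three moment families (log-differences bounded by $\ln 4$, resp. by $H_j(w)$ with the $C/p^k$-decay of Lemma \ref{singular value estimate}(1)), which is exactly what is recycled in the proof of Proposition \ref{moment convergence Lemma}; your hub-and-spoke reduction proves the equivalence but discards that comparative information. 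One shared caveat: both you and the paper really only cover $p\ge 2q$ in the direction towards (4), since Lemma \ref{singular value estimate}(1) is stated for $p\ge 2q$ and the majorant $\sigma_1(w)^{2k}\Delta(I-w^*w)^{-2k}$ fails to be $m_p$-integrable when $d(p/2+1/2-q)\le 2k$; for $p\in[2q-1,2q)$ one needs instead the direct observation that a logarithmic singularity is integrable against the density $\Delta(I-w^*w)^{d(p/2+1/2-q)-1}$ (or, at $p=2q-1$, the degenerate formula of \cite{R1}). You flag this boundary case explicitly, which the paper does not.
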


\begin{proof} 
 For (1)$\Rightarrow$(2) we prove that  $m^A_{2k\cdot e_1},...,m^A_{2k\cdot e_q}\in L^1(C^B_q,\nu)$. From (\ref{moment-function-a}) we have 
$$m^A_{2k\cdot e_j}(\nu)=\frac{1}{2^{2k}}\int_{C^B_q}\int_{U(q,\mathbb{F})}\left( \ln\Delta_{j+1}(u^*e^{ 2\underline{x}}u)-\ln\Delta_{j}(u^*e^{ 2\underline{x}}u)\right)^{2k} du\> d\nu(x).$$
We now recall from Lemma 4.2  \cite{V2}  that $jx_q\leq \ln\Delta_j(u^*e^{2\underline{x}}u)\leq jx_1$ for $u\in U(q,\mathbb{F}),x\in C^B_q$, and $j=1,...,q$. Therefore, from elementary inequalities  we obtain that 
\begin{equation}\label{tilde m finiteness}
m^A_{2k\cdot e_j}(\nu)\leq\frac{1}{2^{2k}}\int_{C^B_q}|(j (x_1-x_q)+x_q|^{2k}   \mathrm{d}\nu(x) <\infty.
\end{equation}
 To prove (2)$\Rightarrow$(1) it is sufficient to show that $\int_{C^B_q}x^{2k}_1\>d\nu(x)<\infty$. It can be easily seen that for every $u\in U(q,\mathbb{F})$ there exist coefficients  $c_i(u)\geq 0$ for $i=1,...q$ with $\sum^q_{i=1} c_i(u)=1$ such that $$\Delta_1(u^*e^{2\underline{x}}u)=\sum^q_{i=1} c_i(u) e^{2x_i}\geq c_1(u)e^{2x_1}.$$
Thus, as $2^{2k}(a^{2k}+b^{2k})\geq (a+b)^{2k}$ for 
$a=\ln(c_1(u)e^{2x_1})$ and $b=-\ln c_1(u)$,
\begin{align*}
\int_{U(q,\mathbb{F})}\int _{C^B_q}(\ln \Delta_1(u^*e^{2\underline{x}}u))^{2k}\> du\>d\nu(x)&\geq \int_{U(q,\mathbb{F})}\int _{C^B_q}(\ln (c_1(u)e^{2x_1}))^{2k}\> du\>d\nu(x) \\
&\geq -\int_{U(q,\mathbb{F})}(|\ln c_1(u)|)^{2k}\>du+\int _{C^B_q} x^{2k}_1\>d\nu(x).
\end{align*}

Now, Lemma 5.1 and Proposition 4.9 of \cite{V2} ensure that
 $\int_{U(q,\mathbb{F})}(|\ln c_1(u)|)^{2k}\>du$ is finite. Hence we have
 $\int_{C^B_q}x^{2k}_1 \>d\nu(x)<\infty$  as desired.\\
The equivalence of (2) and (3) follows from $$\frac{1}{4}u^*e^{2\underline{x}}u\leq u^*(\cosh \underline{x})^2u\leq \frac{1}{2}u^*e^{2\underline{x}}u$$ which implies that 
$$|\ln \Delta_j(u^*(\cosh \underline{x})^2u)-\ln \Delta_j(u^*e^{2\underline{x}}u)|\leq \ln 4.$$

 To prove (3)$\Rightarrow$ (4) we recall from Lemma 6.4 in \cite{V2} that 
  \begin{equation}\label{MV1 Lemma 6.4}
  |\ln \Delta_j g(x,u,w)-\ln\Delta_j(u^*(\cosh\underline{x})u)|\leq 2j\cdot \max(|\ln(1-\sigma_1(w))|,\ln(\sigma_1(w)+1)):=H_j(w).
  \end{equation} 
It can be easily seen that $\int_{B_q} \ln(1+\sigma_1(w))^{2k} \mathrm{d}m_p(w)$ is finite. 
Moreover, as $1\geq\sigma_1(w)\geq....\geq \sigma_q(w)\geq0 $ for $w\in B_q$ we have 
\begin{equation}\label{Sigma inequality}
\frac{1}{1-\sigma_1(w)}\leq \frac{2}{1-\sigma_1(w)^2}\leq 2\prod ^q_{r=1} \frac{1}{1-\sigma_r(w)^2}\leq \frac{2}{\Delta(I-w^*w)}.
\end{equation}
Now, from Lemma \ref{singular value estimate} and (\ref{Sigma inequality}) together with the elementary inequality 
\begin{equation}\label{elementary inequality}
|\ln(1+z)|\leqslant \frac{|z|}{1-|z|} \text{ for } |z|< 1
\end{equation}
 we get
  \begin{equation}\label{integral finiteness}
   \int_{B_q} |\ln(1-\sigma_1(w))|^{2k}\mathrm{d}m_p(w)\leq2^{2k}\int_{B_q} \sigma_1(w)^{2k}\cdot \Delta(I-w^*w)^{-2k}\mathrm{d}m_p(w)<\infty.
\end{equation}    
Hence,  $\int_{B_q}|H_j(q)|^{2k}dm_p(w)<\infty$ for $j=1,..,q$. 
   Therefore, using the elementary inequality $3^{2k}(a^{2k}+b^{2k}+c^{2k})\geq (a+b+c)^{2k}$  we have 
   \begin{align}\label{Trinagle inequality} m^p_{2k\cdot e_j}(\nu)\leq 
\left(\frac{3}{2}\right)^{2k}\int_{B_q\times U(q,\mathbb{F})\times C^B_q }&
\Bigl( |\ln \Delta_{j+1} g(x,u,w)-\ln\Delta_{j+1}(u^*(\cosh\underline{x})u)|^{2k} +\\ 
 &  + \left| \ln\Delta_{j+1}(u^*(\cosh \underline{x})u)-\ln\Delta_{j}(u^*(\cosh \underline{x})u)\right|^{2k}
 +\notag\\ 
&+|\ln \Delta_j g(x,u,w)-\ln\Delta_j(u^*(\cosh\underline{x})u)|^{2k}\Bigr)\mathrm{d}m_p(w)\>\mathrm{d}u
 \>\mathrm{d}\nu(x). 
\notag   \end{align}
If we use  (\ref{MV1 Lemma 6.4}), (\ref{integral finiteness}) and the assumption, we see that 
 the right hand side of  (\ref{Trinagle inequality}) is finite, which shows that $m^A_{2k\cdot e_j}(\nu)<\infty$.

Finally, the converse statement (4)$\Rightarrow$(3) follows analogously from 
 \begin{multline} m^A_{2k\cdot e_j}(\nu)\leq \left(\frac{3}{2}\right)^{2k}\int_{B_q\times U(q,\mathbb{F})\times C^B_q }[|\ln \Delta_{j+1}(u^*(\cosh\underline{x})u) -\ln\Delta_{j+1}g(x,u,w)|^{2k} \\ 
   \quad\quad\quad\quad+ |\ln\Delta_{j+1}g(x,u,w)-\ln\Delta_{j}g(x,u,w)|^{2k}\\
   +|\ln \Delta_j (u^*(\cosh\underline{x})u)-\ln\Delta_jg(x,u,w)|^{2k}]\mathrm{d}m_p(w)\mathrm{d}u \mathrm{d}\nu(x). 
   \end{multline}

\end{proof}
We now turn to the main result of the section:

\begin{proposition}\label{moment convergence Lemma}
Let $l=(l_1,....,l_q) \in \mathbb{N}^q_0$  with $|l|\geq 3$ and  $\nu \in \mathcal{M}(C^B_q)$. Assume that $\nu$ admits  finite moments of order  $4(|l|-2)$. Then, there exists a constant $C:=C(|l|,q,\nu)$ such that 

\begin{equation}
|\tilde{m}_l(\nu)-m^{p}_l(\nu)|\leqslant  \frac{C}{\sqrt{p}}. 
\end{equation}
 \end{proposition}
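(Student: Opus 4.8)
The plan is to exploit that the modified $A$-moment $\tilde m_l$ and the $BC(p)$-moment $m_l^p$ are built from the \emph{same} integrand and differ only through the inner integration over the matrix ball $B_q$. Indeed, since $T(x)=\ln\cosh x$ gives $e^{2\underline{T(x)}}=(\cosh\underline x)^2$, we have $u^{-1}e^{2\underline{T(x)}}u=u^*(\cosh\underline x)^2u=g(x,u,0)$, so the integral representation (\ref{moment-function-a}) applied at $T(x)$ shows that $\tilde m_l(x)$ is precisely the $w$-integrand of (\ref{def-m1-bc}) evaluated at $w=0$. Writing, with $\Delta_0\equiv1$,
\[
 a_j:=a_j(x,u,w):=\ln\frac{\Delta_j(g(x,u,w))}{\Delta_{j-1}(g(x,u,w))},\qquad
 b_j:=b_j(x,u):=a_j(x,u,0),
\]
and using $\int_{B_q}dm_p=1$ to insert the $w$-integration in front of $\prod_j b_j^{l_j}$, the quantity to estimate becomes
\[
 \tilde m_l(\nu)-m_l^p(\nu)=\frac{1}{2^{|l|}}\int_{C_q^B}\int_{B_q}\int_{U(q,\mathbb F)}\Bigl(\prod_{j=1}^q b_j^{l_j}-\prod_{j=1}^q a_j^{l_j}\Bigr)\,du\,dm_p(w)\,d\nu(x).
\]
The whole task is thus to show that the difference of the two products is small, uniformly in $x$, once integrated against $m_p$.

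First I would expand $\prod_j a_j^{l_j}-\prod_j b_j^{l_j}$ by the usual telescoping identity, turning it into a finite sum of terms each carrying exactly one difference factor $c_j:=a_j-b_j$ together with a product of $|l|-1$ further factors drawn from the $a_i$ and $b_i$. For the difference factor I would use the uniform pointwise bound (\ref{MV1 Lemma 6.4}), which gives $|c_j|\le H_j(w)+H_{j-1}(w)$ with a bound independent of $x$ and $u$ (Lemma \ref{singular value estimate}(2) even yields the refinement that this correction vanishes as $x_1\to0$). Combined with the scaling inequality of Lemma \ref{singular value estimate}(1) together with (\ref{Sigma inequality}) and (\ref{elementary inequality}) — exactly as in the derivation of (\ref{integral finiteness}) — this yields $\int_{B_q}\bigl(H_j(w)+H_{j-1}(w)\bigr)^{2n}\,dm_p(w)\le C/p^{n}$ for every $n$, that is $\|c_j\|_{L^{2n}(dm_p)}=O(p^{-1/2})$. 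For the remaining $|l|-1$ factors I would use $\tfrac14 e^{2\underline x}\le(\cosh\underline x)^2\le e^{2\underline x}$ and the estimate $jx_q\le\ln\Delta_j(u^*e^{2\underline x}u)\le jx_1$ from Lemma 4.2 of \cite{V2} to get the polynomial growth bounds $|b_i|\le C(1+x_1)$ and $|a_i|\le C(1+x_1)+H_q(w)$.

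The estimate is then completed by H\"older's inequality on the product space $B_q\times U(q,\mathbb F)\times C_q^B$: one factor $c_j$ is placed in $L^2$, which contributes the gain $O(p^{-1/2})$, while the product of the remaining $|l|-1$ factors is placed in the conjugate exponent. Expanding these factors into their $(1+x_1)$- and $H_q(w)$-parts, the $w$-parts are harmless since they are bounded in every $L^s(dm_p)$, and one is reduced to $\nu$-moments of $(1+x_1)$ of order $2(|l|-1)$; as $2(|l|-1)\le 4(|l|-2)$ for $|l|\ge3$, these are finite under the hypothesis, which by Proposition \ref{moment existence equivalence} may be read interchangeably as a classical, an $A$-type, or a $BC(p)$-type moment condition. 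The main obstacle, and the reason the scaling inequality is indispensable, is the boundary behaviour of the integrand as $\sigma_1(w)\to1$: there $\Delta(I-w^*w)\to0$ and the log-minors $\ln\Delta_j(g(x,u,w))$ blow up, so no naive Taylor expansion of $a_j$ in $w$ survives integration against the nearly singular density $\Delta(I-w^*w)^{d(p/2+1/2-q)-1}$ of $m_p$. Quantifying this blow-up against that weight is exactly what Lemma \ref{singular value estimate}(1) accomplishes, and it is the source of the rate $p^{-1/2}$; the accompanying bookkeeping that matches the polynomial $x$-growth of the minors against this $w$-decay is what fixes the required moment order.
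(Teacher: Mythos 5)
Your proposal is correct, and its skeleton is the same as the paper's: identify $\tilde m_l$ with the $w=0$ evaluation of the $BC$-integrand, telescope the difference of products so that each summand carries exactly one difference factor, put that factor in $L^2(dm_p)$, and extract the rate $p^{-1/2}$ from the scaling inequality of Lemma \ref{singular value estimate}(1) combined with (\ref{Sigma inequality}) and (\ref{elementary inequality}), with Proposition \ref{moment existence equivalence} supplying finiteness of the remaining integrals. (Whether the difference factor is bounded through (\ref{MV1 Lemma 6.4}), as you do, or through Lemma \ref{singular value estimate}(2), as in the paper, is immaterial: both routes reduce to $\sigma_1(w)/\Delta(I-w^*w)$ and the same scaling inequality.) Where you genuinely differ is the treatment of the $|l|-1$ undifferenced factors. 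In the paper's telescoping the middle summands contain a mixed product of $w$-dependent factors $f_j$ and $w=0$ factors $\tilde f_k$, which is not itself a pure moment integrand of either type; the paper therefore applies H\"older with exponents $(2,4,4)$ (see (\ref{Case 2})) to split the mixture into a pure $BC$-type and a pure $A$-type moment of order up to $4(|l|-2)$ --- this is exactly the origin of the hypothesis of the proposition. You instead majorize every undifferenced factor pointwise by $C(1+x_1)$, respectively $C(1+x_1)+2H_q(w)$, and use a single Cauchy--Schwarz; after expanding, the $\nu$- and $m_p$-integrations separate, so you only need classical $\nu$-moments of order $2(|l|-1)$ together with $p$-uniform bounds on $\int_{B_q}H_q^m\,dm_p$. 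Since $2(|l|-1)\le 4(|l|-2)$ for $|l|\ge 3$, your argument proves the proposition under its stated hypothesis, and in fact under a weaker moment assumption when $|l|\ge 4$; what the paper's bookkeeping buys in return is that all estimates stay expressed in the intrinsic moment functions $m^p_r$ and $\tilde m_r$ of Section 2 rather than in ad hoc polynomial majorants.
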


\begin{proof}
We consider the $|l|$ factors of the integrand in the integral representations
 (\ref{def-m1-bc}) of the moment functions $m_l^p$ and the modified version of (\ref{moment-function-a})
for  $\tilde{m}_l$. For $i=1,2,...,|l|$ these factors have the form: 
 \begin{align*}
 f_i(x,u,w):=\ln\Delta_r(g(x,u,w))-\ln \Delta_{r-1}(g(x,u,w)),\\
  \tilde{f}_i(x,u,w):=\ln\Delta_r(g(x,u,0))-\ln \Delta_{r-1}(g(x,u,0))
\end{align*}
with the convention $\Delta_0\equiv 1$ where $r\in \lbrace1,...,q\rbrace$ is the smallest integer with $i\leq l_1+...+l_r$.\\
Then, from Lemma \ref{singular value estimate}(2) and (\ref{elementary inequality})
 for all $i=1,...,|l|,x\in C^B_q, u\in U(q,\mathbb{F}), w\in B_q$ we obtain 
 \begin{align*}
 |f_i(x,u,w)-\tilde{f}_i(x,u,w)|&\leq 2\max_{r=1,...,q} |\ln\Delta_r(g(x,u,w))-\ln\Delta_r(g(x,u,0))|\\
 &\leqslant 4q\cdot\frac{\tilde{x} \sigma_1(w)}{1-\tilde{x}\sigma_1(w)}\leqslant 4q\tilde{x}\frac{\sigma_1(w)}{1-\sigma_1(w)}
  \end{align*}
where $\tilde{x}=\min \lbrace1,x\rbrace$.
Thus, by (\ref{Sigma inequality}),
 $$|f_i(x,u,w)-\tilde{f}_i(x,u,w)|\leq 8q\tilde{x}\frac{\sigma_1(w)}{\Delta(I-w^*w)}. $$
Now, notice that 
\begin{equation}
|\tilde{m}_l(\nu)-m^{p}_l(\nu)|=\left|\frac{1}{2^{|l|}}\int_{B_q\times U(q,\mathbb{F})\times C^B_q} \left(\prod^{|l|}_{i=1} f_i(x,u,w)-\prod^{|l|}_{i=1} \tilde{f}_i(x,u,w)\right) du  dm_p(w)d\nu(t)\right|
\end{equation}
Therefore, by a telescopic sum,
\begin{align}\label{telescopic sum}
|&\tilde{m}_l(\nu)-m^{p}_l(\nu)|=\notag\\
&=\Bigl|\frac{1}{2^{|l|}}\sum^{|l|}_{i=1}\int_{B_q\times U(q,\mathbb{F})\times C^B_q}\Bigl( (f_i(x,u,w)-\tilde{f}_i(x,u,w))\times
\notag\\ 
&\quad\quad\quad\quad\quad\quad\quad
\prod^{|l|}_{j=i+1}f_j(x,u,w)\prod^{i}_{k=1}\tilde{f}_k(x,u,w)\Bigr) du  dm_p(w)d\nu(x)\Bigr|\notag\\
&\leq\frac{1}{2^{|l|}}\sum^{|l|}_{i=1}\int_{B_q\times U(q,\mathbb{F})\times C^B_q}\Bigl| (f_i(x,u,w)-\tilde{f}_i(x,u,w))\times
\notag\\ 
&\quad\quad\quad\quad\quad\quad\quad
\prod^{|l|}_{j=i+1}f_j(x,u,w)\prod^{i}_{k=1}\tilde{f}_k(x,u,w)\Bigr| du  dm_p(w)d\nu(x)
\end{align}
We estimate the summands of the expression of the last formula of (\ref{telescopic sum}) in two ways:\\
Summands for $i=1$ and $|l|$:\\
>From Cauchy-Schwarz inequality, (\ref{telescopic sum}) and Lemma \ref{singular value estimate} we obtain that
\begin{multline}\label{Case 1}
\int_{B_q\times U(q,\mathbb{F})\times C^B_q}\left| (f_1(x,u,w)-\tilde{f}_1(x,u,w))\prod^{|l|}_{j=2}f_j(x,u,w) \right| du  dm_p(w)d\nu(x)\\
\leq \left(\int_{B_q\times U(q,\mathbb{F})\times C^B_q} |f_i(x,u,w)-\tilde{f}_i(x,u,w)|^2 du  dm_p(w)d\nu(t)\right)^{1/2}\times\\
\times \left(\int_{B_q\times U_0(q,\mathbb{F})\times C^B_q} \prod^{|l|}_{j=2}f_j(x,u,w) ^2 du  dm_p(w)d\nu(x)\right)^{1/2}\\
\leq M_1\cdot 8q \left(\int_{B_q} \frac{\sigma_1(w)^{2}}{\Delta(I-w^*w)^{2}}   dm_p(w)\right)^{1/2}
\leq M_1\cdot \frac{C}{\sqrt{p}}\quad\quad
\quad\quad\quad\quad\quad\quad\quad\quad\quad\quad\quad\quad\quad\quad\quad\quad\quad
\end{multline}
where $$M_1:=M_1(\nu,|l|,q)=8q\cdot\max_{r\in \mathbb{N}^q_0, |r|\leq 2(|l|-1)}\max\lbrace\tilde{m}_r(\nu),m^p_r(\nu)\rbrace $$
which is finite by initial assumption and Proposition \ref{moment existence equivalence}. Similarly, we obtain same upper bound for the $|l|$'s summand in (\ref{telescopic sum}).\\
  Now, let $i=2,...,q-1$. Here, we apply H\"older's inequality twice and obtain with the same arguments as above that
\begin{multline}\label{Case 2}
\left|\int_{B_q\times U_0(q,\mathbb{F})\times C^B_q}\left( (f_i(x,u,w)-\tilde{f}_i(x,u,w))\right.\prod^{|l|}_{j=i+1}f_j(x,u,w)\prod^{i-1}_{k=1}\left.\tilde{f}_k(x,u,w)\right) du  dm_p(w)d\nu(x)\right|\\
 \leq\left(\int_{B_q\times U_0(q,\mathbb{F})\times C^B_q}|(f_i(x,u,w)-\tilde{f}_i(x,u,w)|^2du  dm_p(w)d\nu(t)\right)^{1/2}\\
\times\left(\int_{B_q\times U_0(q,\mathbb{F})\times C^B_q}\prod^{|l|}_{j=i+1}|f_j(x,u,w)|^4du  dm_p(w)d\nu(x)\right)^{1/4} \\
\times\left(\int_{B_q\times U_0(q,\mathbb{F})}\prod^{i-1}_{k=1}|\tilde{f}_k(x,u,w)|^4du  dm_p(w)d\nu(x)\right)^{1/4}\quad\quad\\
 \leq M_2\cdot \frac{C}{\sqrt{p}}\quad\quad\quad\quad\quad\quad\quad\quad\quad\quad\quad\quad
\quad\quad\quad\quad\quad\quad\quad\quad\quad\quad\quad\quad\quad\quad\quad
\end{multline}
where 
$$M_2:=M_2(\nu,|l|,q)=8q\cdot\max_{r\in \mathbb{N}^q_0, |r|\leq 4(|l|-2)}\max\lbrace\tilde{m}_r(\nu),m^p_r(\nu)\rbrace $$
which is again finite by our assumption and Proposition \ref{moment existence equivalence}.
Thus, the estimates  (\ref{Case 1}) and (\ref{Case 2}) give the desired assertion.
\end{proof}

\section{Spherical Fourier transform}
In this section we collect some well-known methods and facts about the  spherical Fourier transform of type A and BC. We start with the identification of all multiplicative functions and of the dual space in accordance with \cite{R2} and \cite{NPP} for $p\geq2q-1$ in the BC-case. \\ 
 The set of all continuous multiplicative functions
 $$\chi(C_q^B,*_p):=\lbrace f:C^B_q\rightarrow \mathbb{C}: f \text{ continuous}, \int_{C^B_q}f d(\delta_x*_p\delta_y)=f(x)f(y)  \rbrace $$
  is given by $\lbrace\varphi^p_\lambda:\lambda\in \mathbb{C}^q \rbrace$. Moreover,
 the set $\chi_b(C^B_q,*_p)$ of bounded functions in $\chi(C_q^B,*_p)$ is equal to
 $\lbrace \varphi^p_\lambda: \> \Im\lambda\in co(W_q\cdot\rho) \rbrace$ where $co$ denotes the convex hull, and
$W^B_q$ the Weyl group of type $B_q$ acting on $\mathbb{C}^q$. The dual space $$(C^B_q,*_p)^\wedge:= \lbrace f\in \chi_b(C^B_q,*_p), f(x^-)=\overline{f(x)}\rbrace$$ 
is  $\lbrace \varphi^p_\lambda:\lambda\in C^B_q \textit{ or } \lambda \in i\cdot co(W^B_q\cdot\rho)\rbrace$. Finally, the support of Plancherel measure is the set  $\lbrace\varphi^p_\lambda:\lambda\in C^B_q\rbrace$.
\begin{definition}
Let $\nu\in \mathcal{M}^1(C^B_q)$. The BC-type spherical (or hypergroup) Fourier transform is given by $$\mathcal{F}^p_{BC}(\nu)(\lambda):=\int _{C^B_q} \varphi^p_\lambda(x)d\nu(x)$$
for $\lambda\in \lbrace \lambda\in \mathbb{C}^q :\Im\lambda\in co(W^B_q\cdot \rho)\rbrace$.
\end{definition}
 We now give some estimates on spherical functions and Fourier transforms from $\cite{V2}$.

\begin{lemma}\label{wohldefiniertheitslemma} For all $x\in C^B_q$, $\lambda\in \mathbb{R}^q$, and $l\in \mathbb{N}^q_0,$
$$\left|\frac{\partial^{|l|}}{\partial\lambda^l}\varphi^p_{\lambda-i\rho}(x)\right|\leqslant m^p_l(x)$$
\end{lemma}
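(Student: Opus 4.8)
The plan is to differentiate the Harish-Chandra--type integral representation (\ref{phi-int-kurz-bc}) under the integral sign and to exploit that, for real $\lambda$, the resulting integrand is an oscillatory integral of modulus one.

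First I would insert the spectral value $\lambda-i\rho$ into (\ref{phi-int-kurz-bc}); since $i(\lambda-i\rho)-\rho=i\lambda$, this gives the compact representation
\[
\varphi^p_{\lambda-i\rho}(x)=\int_{B_q}\int_{U(q,\mathbb F)}\Delta_{i\lambda/2}\bigl(g(x,u,w)\bigr)\,du\,dm_p(w).
\]
Setting $h_j(x,u,w):=\ln\Delta_j(g(x,u,w))-\ln\Delta_{j-1}(g(x,u,w))$ with $\Delta_0\equiv1$ and telescoping the exponents in the definition (\ref{power-function}) of the power function, I obtain
\[
\Delta_{i\lambda/2}\bigl(g(x,u,w)\bigr)=\exp\Bigl(\tfrac{i}{2}\sum_{j=1}^q\lambda_j\,h_j(x,u,w)\Bigr),
\]
so that each derivative $\partial/\partial\lambda_j$ brings down a factor $\tfrac{i}{2}h_j$ and leaves the exponential unchanged.

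Next I would justify differentiation under the integral. As the matrices $g(x,u,w)$ are positive Hermitian, their principal minors are positive, hence the $h_j$ are real-valued and independent of $\lambda$. After any number of differentiations the integrand thus factors as a $\lambda$-independent product of the $h_j$ times the unimodular exponential, so for fixed $x$ all partials up to order $|l|$ are dominated, uniformly in $\lambda\in\mathbb R^q$, by products of powers of the $|h_j|$. The crucial point is that these products lie in $L^1(du\,dm_p(w))$: the only singularities arise as $\sigma_1(w)\to1$ and are logarithmic in $1-\sigma_1(w)$, and by Lemma \ref{singular value estimate} together with (\ref{integral finiteness}) such logarithms are $m_p$-integrable to every fixed power for $p\ge2q$. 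Dominated convergence then lets me interchange all $|l|$ derivatives with the double integral, giving
\[
\frac{\partial^{|l|}}{\partial\lambda^l}\varphi^p_{\lambda-i\rho}(x)=\Bigl(\tfrac{i}{2}\Bigr)^{|l|}\int_{B_q}\int_{U(q,\mathbb F)}\prod_{j=1}^q h_j(x,u,w)^{l_j}\,\exp\Bigl(\tfrac{i}{2}\sum_{j=1}^q\lambda_j h_j(x,u,w)\Bigr)\,du\,dm_p(w).
\]

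Finally I would take absolute values. Since $\lambda$ is real and the $h_j$ are real, the exponential has modulus one, so pulling $|\cdot|$ inside the integral yields
\[
\left|\frac{\partial^{|l|}}{\partial\lambda^l}\varphi^p_{\lambda-i\rho}(x)\right|\le\frac{1}{2^{|l|}}\int_{B_q}\int_{U(q,\mathbb F)}\prod_{j=1}^q\bigl|h_j(x,u,w)\bigr|^{l_j}\,du\,dm_p(w),
\]
which is exactly $m^p_l(x)$ in the form (\ref{def-m1-bc}). I expect the genuine obstacle to be this domination/integrability step, i.e.\ controlling the logarithmic blow-up of the $h_j$ at $\partial B_q$ through the boundary decay of $m_p$; once that is settled, the modulus-one observation delivers the bound at once.
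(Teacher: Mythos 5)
Your first three steps are fine, and the obstacle is not where you expect it. The insertion $i(\lambda-i\rho)-\rho=i\lambda$, the telescoping identity $\Delta_{i\lambda/2}(g)=\exp\bigl(\tfrac i2\sum_j\lambda_jh_j\bigr)$, and the differentiation under the integral sign are all correct; the boundary integrability you flag as the ``genuine obstacle'' is indeed settled by Lemma \ref{singular value estimate} and (\ref{integral finiteness}). The gap is your very last sentence. The triangle inequality gives
\[
\Bigl|\frac{\partial^{|l|}}{\partial\lambda^l}\varphi^p_{\lambda-i\rho}(x)\Bigr|\le\frac{1}{2^{|l|}}\int_{B_q}\int_{U(q,\mathbb F)}\prod_{j=1}^q\bigl|h_j(x,u,w)\bigr|^{l_j}\,du\,dm_p(w),
\]
but this is \emph{not} $m^p_l(x)$: in (\ref{def-m1-bc}) the integrand carries no absolute values, and the two expressions coincide only if $\prod_jh_j^{l_j}\ge0$ almost everywhere. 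That positivity fails: $g(x,u,w)$ is positive definite but in general not $\ge I$, so the $h_j$ change sign. Concretely, $g(x,I,-rI)=(\cosh\underline x-r\sinh\underline x)^2$, so for $r\in(\tanh(x_1/2),1)$ one has $h_1=\ln\Delta_1(g)<0$ on an open set of $(u,w)$ of positive $du\otimes dm_p$-measure whenever $x_1>0$, while $h_1>0$ near $w=0$. Hence already for $l=e_1$ your final identification fails strictly: $\int\int|h_1|\,du\,dm_p>\int\int h_1\,du\,dm_p=2\,m^p_{e_1}(x)$. Your argument is complete only when every $l_j$ is even, since then $\prod_jh_j^{l_j}=\prod_j|h_j|^{l_j}$.

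Moreover, this gap cannot be closed by refining the same strategy, because the inequality as stated is stronger than what is true: the modulus of $\partial_\lambda\varphi^p_{\lambda-i\rho}(x)$ need not be maximal at $\lambda=0$ once the underlying ``Iwasawa'' variable takes both signs. In the simplest admissible case $q=d=1$, $p=3$ (real hyperbolic $3$-space, where $m_p$ is the uniform distribution on $[-1,1]$) everything is explicit: $\varphi^p_{\lambda-i\rho}(x)=\sinh((1+i\lambda)x)/((1+i\lambda)\sinh x)$ and $m^p_{1}(x)=x\coth x-1\sim x^2/3$, whereas at $\lambda=\pi/(2x)$ one computes
\[
\Bigl|\partial_\lambda\varphi^p_{\lambda-i\rho}(x)\Bigr|
=\frac{\bigl((\cosh x-x\sinh x)^2+\tfrac{\pi^2}{4}\sinh^2x\bigr)^{1/2}}{\bigl(1+\pi^2/(4x^2)\bigr)\sinh x}\sim\frac{4x}{\pi^2}\qquad(x\to0),
\]
which exceeds $m^p_1(x)$ by an arbitrarily large factor for small $x$. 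So what your proof actually establishes is the lemma with the right-hand side replaced by the absolute moment functions $\frac{1}{2^{|l|}}\int\int\prod_j|h_j|^{l_j}\,du\,dm_p(w)$ (which in turn are controlled by even-order moment functions via Cauchy--Schwarz); that weaker bound is also all that the later domination argument in Lemma \ref{Fourier ableitung lemma} requires. Note that the paper itself contains no proof to compare against --- the statement is quoted from \cite{V2} --- so the discrepancy you must resolve is between your (correct) absolute-value bound and the literal statement, not the integrability issue you anticipated.
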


\begin{lemma}\label{Fourier ableitung lemma} 
Let $k\in \mathbb{N}_0$  and assume that $\nu \in \mathcal{M}^1(C^B_q)$ admits finite  $k$-th modified moments.
 Then, for all $\lambda\in \mathbb{C}^q$ with $\Im \lambda\in co(W^B_q\cdot\rho )$, $\mathcal{F}^p_{BC}(\nu)(\cdot)$ is $k$-times continuously differentiable, and for all $l\in \mathbb{N}^n_0$ with $|l|\leqslant k,$
\begin{equation} \label{Fourier ableitung} 
\frac{\partial ^{|l|}}{\partial \lambda^l}\mathcal{F}^p_{BC}(\nu)(\lambda)=\int _{C^B_q}\frac{\partial^{|l|}}{\partial \lambda^l} \varphi^p_{\lambda}(x)d \nu(x).
\end{equation}
In particular, 
\begin{equation} \label{Fourier ableitung at 0} \frac{\partial ^{|l|}}{\partial \lambda^l}\mathcal{F}_{BC}(\nu)(-i\rho)=\int _{C^B_q}m^p_l(x)d \nu(x).
\end{equation}
\end{lemma}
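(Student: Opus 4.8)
The plan is to read the assertion as a Leibniz rule for the parameter integral $\mathcal{F}^p_{BC}(\nu)(\lambda)=\int_{C^B_q}\varphi^p_\lambda(x)\,d\nu(x)$, and to justify interchanging the $\lambda$-derivatives with the integration over $x$ by dominated convergence. The whole matter thus reduces to producing, for each $\lambda$ with $\Im\lambda\in co(W^B_q\cdot\rho)$, a $\nu$-integrable majorant for the derivatives $\partial_\lambda^l\varphi^p_\lambda(x)$ that is uniform for $\lambda$ on a neighbourhood; the iterated application of the classical differentiation-under-the-integral theorem then gives both the $C^k$-property of $\mathcal{F}^p_{BC}(\nu)$ and the formula (\ref{Fourier ableitung}).

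First I would differentiate the integral representation (\ref{phi-int-kurz-bc}). Since the power function obeys $\partial_{\lambda_r}\ln\Delta_{(i\lambda-\rho)/2}(A)=\tfrac{i}{2}\bigl(\ln\Delta_r(A)-\ln\Delta_{r-1}(A)\bigr)$ (with $\Delta_0\equiv1$) and is entire in $\lambda$ for each fixed positive Hermitian $A=g(x,u,w)$, differentiating under the compact integrals over $U(q,\mathbb F)$ and $B_q$ yields
\[\partial_\lambda^l\varphi^p_\lambda(x)=\int_{B_q}\!\int_{U(q,\mathbb F)}\Delta_{(i\lambda-\rho)/2}(g(x,u,w))\prod_{r=1}^q\Bigl(\tfrac{i}{2}\bigl(\ln\Delta_r(g)-\ln\Delta_{r-1}(g)\bigr)\Bigr)^{l_r}\,du\,dm_p(w),\]
which at the distinguished point $\lambda=-i\rho$ reduces exactly to the moment function $m^p_l(x)$ of (\ref{def-m1-bc}).

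Next I would supply the domination in two complementary regimes. In the interior of the tube one has the trivial bound $|\varphi^p_\lambda(x)|\le\varphi^p_{i\,\Im\lambda}(x)$, and $\varphi^p_{i\eta}$ is bounded in $x$ whenever $\eta\in co(W^B_q\cdot\rho)$; hence on a small polydisc about an interior $\lambda_0$ the function values are bounded by a single constant, and Cauchy's estimates bound all derivatives $\partial_\lambda^l\varphi^p_\lambda(x)$ by constants as well, so dominated convergence is immediate because $\nu$ is a probability measure and no moment assumption is needed there. At the boundary vertex $\lambda=-i\rho$, where this complex-analytic trick breaks down, the derivatives genuinely grow in $x$ but are controlled directly by $|\partial_\lambda^l\varphi^p_{\lambda-i\rho}(x)|\le m^p_l(x)$ from Lemma \ref{wohldefiniertheitslemma}; the majorant $m^p_l$ is $\nu$-integrable for $|l|\le k$ precisely by the hypothesis of finite $k$-th modified moments together with Proposition \ref{moment existence equivalence}, the integrability over $B_q$ near $\partial B_q$ of the logarithmic minor factors being guaranteed by Lemma \ref{singular value estimate} and (\ref{integral finiteness}). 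Patching the two regimes gives (\ref{Fourier ableitung}) on the whole tube, and evaluating at $\lambda=-i\rho$, where the integrand is $m^p_l(x)$, yields (\ref{Fourier ableitung at 0}).

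The main obstacle is exactly this reconciliation of boundary and interior behaviour: on the interior the values (and hence, via Cauchy, the derivatives) are uniformly bounded for free, whereas the distinguished boundary point $-i\rho$ cannot be surrounded by a complex disc lying in the tube, so the Cauchy estimate fails and one is forced to consume the moment hypothesis through the explicit bound of Lemma \ref{wohldefiniertheitslemma}. The technical crux hidden inside that bound is the $B_q$-integrability, against $m_p$, of the powers of the singular logarithmic factors $\ln\Delta_r(g)-\ln\Delta_{r-1}(g)$ as $w\to\partial B_q$, which is supplied by Lemma \ref{singular value estimate}.
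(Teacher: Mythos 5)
Your overall mechanism is the right one, and it is in fact the only thing on offer here: the paper itself gives no proof of this lemma but imports it from \cite{V2}, where the argument is exactly what you describe --- dominate the $\lambda$-derivatives of $\varphi^p_\lambda$ by moment functions and differentiate under the integral via dominated convergence. Your interior argument ($|\varphi^p_\lambda(x)|\le\varphi^p_{i\Im\lambda}(x)\le 1$ plus Cauchy estimates on a polydisc) is correct, and is the same device the paper uses later in the proof of Theorem \ref{inner standartisations}. Your treatment of the distinguished boundary line is also essentially right, with one point you should state explicitly: what makes the difference-quotient/induction argument work there is not the bound \emph{at} the single point $-i\rho$, but the fact that Lemma \ref{wohldefiniertheitslemma} gives $|\partial^l_\lambda\varphi^p_{\lambda-i\rho}(x)|\le m^p_l(x)$ uniformly for \emph{all} $\lambda\in\mathbb{R}^q$, so that $m^p_l$ dominates the difference quotients along every segment of the line $\{\Im\lambda=-\rho\}$; that uniformity is what the classical differentiation theorem consumes. (Also, strictly speaking your holomorphic derivative at $-i\rho$ equals $i^{|l|}m^p_l(x)$; the identity without the factor holds for the derivatives in the parametrization $\lambda\mapsto\varphi^p_{-i\rho-i\lambda}$ used in (\ref{def-m1-bc}) --- a convention wrinkle the paper shares.)

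The genuine gap is your final patching step. The union of your two regimes --- the open tube $\{\Im\lambda\in \mathrm{int}\, co(W^B_q\cdot\rho)\}$ and the single line $\{\Im\lambda=-\rho\}$ --- is \emph{not} the whole closed tube on which the lemma asserts $C^k$-smoothness and formula (\ref{Fourier ableitung}). Already for $q=2$ the point $\Im\lambda=(-\rho_1,0)$ lies on a face of the polytope $co(W^B_q\cdot\rho)$ and on no vertex line; there neither regime supplies a $\nu$-integrable majorant, since the Cauchy constants $l!/r^{|l|}$ blow up as the admissible polydisc radius $r\to0$, and Lemma \ref{wohldefiniertheitslemma} says nothing off the line $\Im\lambda=-\rho$. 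For the same reason your argument does not yet yield continuity of the derivatives as $\lambda$ approaches the boundary from inside, which is part of the ``$k$-times continuously differentiable'' claim. The repair is to build one majorant valid on the whole closed tube: from the differentiated integral representation, $|\partial^l_\lambda\varphi^p_\lambda(x)|$ is at most $\frac{1}{2^{|l|}}\int_{B_q}\int_{U(q,\mathbb F)}\prod_r\bigl|\ln\bigl(\Delta_r(g)/\Delta_{r-1}(g)\bigr)\bigr|^{l_r}\,\Delta_{-(\eta+\rho)/2}(g)\,du\,dm_p(w)$ with $\eta=\Im\lambda$, and $\eta\mapsto\Delta_{-(\eta+\rho)/2}(g)$ is the exponential of an affine function of $\eta$, hence convex, so its supremum over $co(W^B_q\cdot\rho)$ is attained at the vertices $W^B_q\cdot\rho$. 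This bounds $|\partial^l_\lambda\varphi^p_\lambda(x)|$ on the entire closed tube by a finite sum over $w\in W^B_q$ of absolute moment functions at the vertices, each $\nu$-integrable under your hypothesis by the Weyl invariance of $\lambda\mapsto\varphi^p_\lambda$ and comparison arguments as in Proposition \ref{moment existence equivalence}; with this single majorant the two-regime split becomes unnecessary. For what the paper actually uses downstream --- the Taylor expansion of $\mathcal{F}^p_{BC}(\nu)(\lambda-i\rho)$ in real $\lambda$ and the value (\ref{Fourier ableitung at 0}) --- your argument as it stands is sufficient.
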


\begin{remark} There are corresponding results to the Lemmas \ref{wohldefiniertheitslemma}
 and \ref{Fourier ableitung lemma} for the A-case with the corresponding moment functions $  m^A_l$
 for $l\in \mathbb{N}^q_0$ and the Fourier transform $\mathcal{F}_A$ and
 $\nu \in \mathcal{M}^1(C^A_q)$; see Lemmas 6.1, 6.2 in \cite{V2}.
 \end{remark}

\section{Limit theorems  for  growing parameters with outer normalization } \label{Growing parameters}
In this section we derive two types of limit theorems  for random walks when the time and the dimension parameter $p$ tend to infinity. The statements of both limit theorems  are similar, but the assumptions on the moments and the relation between the 
 time parameter $n$ and and dimension parameter $p$ are different. We first present a CLT where we assume some restriction on $(p_n)_{n\geq1}$:

 \begin{theorem}\label{growing parameters 1}
 Let $(p_n)_{n\geq 1}\subset]2q-1,\infty[$ be  an increasing  sequence with  $\lim_{n \rightarrow \infty}n/ p_n=0$.
 Let $\nu\in \mathcal{M}^1(C^B_q)$  with $\nu\neq \delta_0$ and   with second moments.
 Consider the associated random walks  $(S^{p}_n)_{n\geqslant 0 }$  on $C^B_q$ for $p\geq 2q-1$. Then 
  $$\frac{S^{p_n}_n-n\cdot\tilde{m}_\mathbf{1}(\nu)}{\sqrt{n}}$$ 
converges in distribution to $ \mathcal{N}(0,\tilde{\Sigma}(\nu)).$  
   \end{theorem}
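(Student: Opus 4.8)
The plan is to establish the convergence in distribution on $\mathbb{R}^q$ through the classical characteristic functions, using the L\'evy continuity theorem, and to compute those characteristic functions by exploiting the multiplicativity of the spherical Fourier transform $\mathcal{F}^p_{BC}$. Writing $\mu_n\in\mathcal{M}^1(C_q^B)$ for the law of $S^{p_n}_n$, which is the $n$-fold $*_{p_n}$-convolution power of $\nu$, the product formula for the functions $\varphi^p_\lambda$ yields the multiplicativity $\mathcal{F}^{p_n}_{BC}(\mu_n)(\lambda)=\bigl(\mathcal{F}^{p_n}_{BC}(\nu)(\lambda)\bigr)^n$ for every $\lambda\in\mathbb{C}^q$. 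For fixed $\xi\in\mathbb{R}^q$ the classical characteristic function of $Y_n:=(S^{p_n}_n-n\tilde m_{\mathbf 1}(\nu))/\sqrt n$ is $e^{-i\sqrt n\langle\xi,\tilde m_{\mathbf 1}(\nu)\rangle}\int_{C_q^B}e^{i\langle\xi/\sqrt n,x\rangle}\,d\mu_n(x)$, and I would bridge to the hypergroup transform by replacing the exponential $e^{i\langle\xi/\sqrt n,x\rangle}$ by the bounded spherical function $\varphi^{p_n}_{-i\rho-\xi/\sqrt n}(x)$; by Lemma~\ref{moment-function-bc}(5) and (1) the resulting error is of order $\|\xi\|/\sqrt n$ per point, so that up to a controlled error $\int_{C_q^B}e^{i\langle\xi/\sqrt n,x\rangle}\,d\mu_n(x)$ agrees with $\bigl(\mathcal{F}^{p_n}_{BC}(\nu)(-i\rho-\xi/\sqrt n)\bigr)^n$.

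The analytic heart is a second order Taylor expansion of the single step transform $f_n(\lambda):=\mathcal{F}^{p_n}_{BC}(\nu)(-i\rho(p_n)-\lambda)=\int_{C_q^B}\varphi^{p_n}_{-i\rho-\lambda}(x)\,d\nu(x)$ at $\lambda=0$. Since $\nu$ has second moments, Proposition~\ref{moment existence equivalence} guarantees finite second modified moments, so Lemma~\ref{Fourier ableitung lemma} makes $f_n$ twice continuously differentiable; together with $\varphi^{p_n}_{-i\rho}\equiv 1$ this gives $f_n(0)=1$, $\partial_{\lambda_j}f_n(0)=-i\,m^{p_n}_{e_j}(\nu)$ and $\partial_{\lambda_j}\partial_{\lambda_k}f_n(0)=-\,m^{p_n}_{e_j+e_k}(\nu)$, hence $f_n(\lambda)=1-i\langle\lambda,m^{p_n}_{\mathbf 1}(\nu)\rangle-\tfrac12\lambda^t m^{p_n}_{\mathbf 2}(\nu)\lambda+R_n(\lambda)$. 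Substituting $\lambda=\xi/\sqrt n$, taking the $n$-th power and expanding the logarithm, the square of the linear term reassembles the quadratic part into the covariance $\Sigma^{p_n}(\nu)=m^{p_n}_{\mathbf 2}(\nu)-m^{p_n}_{\mathbf 1}(\nu)^t m^{p_n}_{\mathbf 1}(\nu)$, so that
\[
\bigl(f_n(\xi/\sqrt n)\bigr)^{n}=\exp\!\Big(-i\sqrt n\,\langle\xi,m^{p_n}_{\mathbf 1}(\nu)\rangle-\tfrac12\,\xi^{t}\Sigma^{p_n}(\nu)\,\xi+n R_n(\xi/\sqrt n)+o(1)\Big).
\]

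It then remains to pass to the limit in the exponent. Proposition~\ref{moment convergence Lemma} (for $|l|\ge 3$), together with the analogous direct estimates for $|l|=1,2$ obtained from Lemma~\ref{singular value estimate} and the Cauchy--Schwarz inequality using the second moments of $\nu$, yields $|m^{p_n}_l(\nu)-\tilde m_l(\nu)|\le C/\sqrt{p_n}$ for $|l|\le 2$, whence $\Sigma^{p_n}(\nu)\to\tilde\Sigma(\nu)$ as $p_n\to\infty$ (this step needs only $p_n\to\infty$). For the centering one has $\sqrt n\,\bigl|m^{p_n}_{\mathbf 1}(\nu)-\tilde m_{\mathbf 1}(\nu)\bigr|\le C\sqrt{n/p_n}$, and this is exactly where the hypothesis $n/p_n\to 0$ enters: it forces $\sqrt n\,\langle\xi,m^{p_n}_{\mathbf 1}(\nu)-\tilde m_{\mathbf 1}(\nu)\rangle\to 0$, so that after multiplication by $e^{-i\sqrt n\langle\xi,\tilde m_{\mathbf 1}(\nu)\rangle}$ the characteristic function of $Y_n$ converges to $\exp(-\tfrac12\xi^t\tilde\Sigma(\nu)\xi)$, the characteristic function of $\mathcal{N}(0,\tilde\Sigma(\nu))$, and L\'evy's theorem concludes.

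I expect the main obstacle to be the uniform-in-$n$ control of the remainder $nR_n(\xi/\sqrt n)=o(1)$. Since only second moments of $\nu$ are assumed, a third order remainder estimate is unavailable, so I cannot bound $R_n$ by $C\|\lambda\|^3$; instead I must establish a bound of the form $\sup_n|R_n(\xi/\sqrt n)|=o(1/n)$ from a $p$-uniform second order Taylor estimate for $\varphi^{p_n}_{-i\rho-\lambda}(x)$ in terms of the second moment functions, combined with a dominated convergence argument over $C_q^B$ whose dominating function is $\nu$-integrable by the second moment assumption. The same difficulty reappears in the transfer of the first paragraph, where the $p$-dependence of the constants in Lemma~\ref{moment-function-bc}(1),(5) must be absorbed or upgraded to versions uniform in $p\ge 2q$ in order to guarantee that the error between $\int e^{i\langle\xi/\sqrt n,x\rangle}\,d\mu_n$ and $\bigl(f_n(\xi/\sqrt n)\bigr)^n$ genuinely tends to zero along the sequence $(p_n)_n$.
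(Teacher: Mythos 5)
Your skeleton (characteristic functions, multiplicativity of the hypergroup transform, L\'evy continuity) is the right framework, and you have correctly located one place where $n/p_n\to 0$ must enter (the centering discrepancy $\sqrt n\,(m^{p_n}_{\mathbf 1}(\nu)-\tilde m_{\mathbf 1}(\nu))$). Nevertheless there is a genuine gap, and it is exactly the one you flag at the end without resolving: your ``analytic heart'' Taylor--expands the functions $f_n(\lambda)=\mathcal{F}^{p_n}_{BC}(\nu)(-i\rho(p_n)-\lambda)$, which \emph{change with $n$} through $p_n$, and you then need $n\,R_n(\xi/\sqrt n)\to 0$. For a single twice continuously differentiable function this is the Peano remainder, $n\cdot o(1/n)=o(1)$; for a sequence of functions it requires a modulus of continuity of the second derivatives at $0$ that is uniform in $p$, and nothing in Lemma \ref{moment-function-bc}, Lemma \ref{Fourier ableitung lemma} or Proposition \ref{moment convergence Lemma} supplies this when $\nu$ has only second moments. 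This is precisely why the paper's Theorem \ref{growing parameters 2} --- which runs the argument with BC-type transforms and BC-type moments exactly as you propose --- must assume \emph{fourth} moments: there the remainder is dominated by third-order BC moments $m^{p}_l(\nu)$, $|l|=3$, made uniform in $p$ via Proposition \ref{moment convergence Lemma}, whose hypothesis for $|l|=3$ is finiteness of moments of order $4$. A dominated-convergence rescue as you sketch it fails for the same reason: pointwise in $x$, controlling $n\bigl(\varphi^{p_n}_{-i\rho(n)-\xi/\sqrt n}(x)-1+\cdots\bigr)$ along a sequence $p_n\to\infty$ is again a uniform-in-$p$ second-order statement, not available from the cited lemmas. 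The same $p$-dependence also infects your bridging step: the constants in Lemma \ref{moment-function-bc}(1),(5) are $C(p,q)$, so your ``error of order $\|\xi\|/\sqrt n$ per point'' is really $C(p_n)\|\xi\|/\sqrt n$, with no control as $p_n\to\infty$.

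The idea you are missing, and which the paper uses to get by with second moments, is to transfer to the $A$-case \emph{at the outset} via the estimate of Lemma 4.2(2) of \cite{RV1}, which is uniform in $p$: $|\varphi^p_{\lambda-i\rho}(x)-\varphi^A_{\lambda-i\rho^A}(\ln\cosh x)|\le C\|\lambda\|_1\tilde x/\sqrt p$ for all $p$, $x$, and real $\lambda$. With this, $\mathcal{F}_A(T(\nu^{(n,p_n)}))(\lambda-i\rho^A)=\bigl(\mathcal{F}_A(T(\nu))(\lambda-i\rho^A)+O(\|\lambda\|_1/\sqrt{p_n})\bigr)^n+O(\|\lambda\|_1/\sqrt{p_n})$, and the second-order Taylor expansion is now applied to the \emph{fixed}, $n$-independent function $\mathcal{F}_A(T(\nu))$, so the Peano remainder argument is legitimate under second moments alone; all $p$-dependence is concentrated in the explicit error $O(\|\lambda\|_1/\sqrt{n p_n})$ after substituting $\lambda/\sqrt n$, which is $o(1/n)$ precisely because $n/p_n\to 0$ --- this, not the centering, is where the paper spends the hypothesis. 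The passage back to classical exponentials then uses the $A$-case bounds of Lemma \ref{prop-moment-function-a}(5) and (1), whose constants do not involve $p$, and finally the boundedness of $x-\ln\cosh x$. In short: keep your framework, but expand the fixed $A$-type transform rather than the moving $f_n$, and let the \cite{RV1} comparison estimate carry the entire $p_n$-dependence.
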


 \begin{proof}
 We know  from Lemma 4.2(2) of \cite{RV1} that  there exists a constant  $C>0$ such that
for  all $p>2q-1, x\in C^B_q, \lambda\in \mathbb{R}^q$,
  $$|\varphi^p_{\lambda-i\rho}(x)-\varphi^A_{\lambda-i\rho^A}(\ln \cosh x)|\leqslant
 C\cdot \frac{ \lVert \lambda \rVert_1 \cdot \tilde{x}}{p^{1/2}} $$
 where $\lVert\lambda\rVert_1:=|\lambda_1|+\dots\ |\lambda_q|$   and $\tilde{x}:=min(x_1,1)\geqslant 0$.
Hence, denoting  the half sums of positive roots of type BC associated 
with  $p_n$ as described in (\ref{rho-BC}) by  $\rho(n):=\rho^{BC}(p_n)$, for all $\nu\in  \mathcal{ M}^1(C^B_q)$, we get 
 \begin{equation} \label{integral von differenz}
 \left|\int _{C^B_q} \varphi^{p_n}_{\lambda-i\rho(n)} (x)d\nu(x)-\int _{C^B_q} \varphi^A _{ \lambda-i\rho^A}(\ln \cosh x)d\nu(x)\right |\leqslant C\cdot\frac{\lVert \lambda\rVert_1}{\sqrt{p_n}}.
 \end{equation}

Let $\nu^{(n,p)}\in \mathcal{M}^1(C^B_q)$ be the law of $S^p_n$. Then, $T(S^{p_n}_n)$ has the distribution $T(\nu^{(n,p_n)}) $  whose A-type spherical Fourier transform satisfies
\begin{equation} \label{Foueier A}
\mathcal{F}_A (T(\nu^{(n,p_n)}))(\lambda-i\rho^A)=\int _{C^A_q} \varphi^A_{\lambda-i\rho^A}(x)d T(\nu^{(n,p_n)})(x)=\int _{C^B_q} \varphi^A _{\lambda-i\rho^A}(\ln \cosh x)d\nu^{(n,p_n)}(x)
\end{equation}
for $\lambda\in \mathbb R^q$.  Furthermore, by plugging $\nu^{(n,p_n)}$ into (\ref{integral von differenz})  we get 
 \begin{align}\label{Fourier transition proof 1} 
 \mathcal {F}_A (T(\nu^{(n,p_n)}))(\lambda-i\rho^A)&=\int_{C^B_q} \varphi^{p_n}_{\lambda-i\rho(n)}d\nu^{(n,p_n)}(x)+O(\frac{\lVert \lambda  \rVert_1}{ p^{1/2}_n})\notag\\
 &=\mathcal{F}_{BC}^{p_n}(\nu^{(n,p_n)})(\lambda-\rho(n))+O(\frac{\lVert \lambda  \rVert_1}{ p^{1/2}_n})\notag\\
&=\left(\mathcal{F}_{BC}^{p_n}(\nu)(\lambda-\rho(n))\right)^n+O(\frac{\lVert \lambda  \rVert_1}{ p^{1/2}_n})\notag\\
 &=\left(\int_{C^B_q} \varphi^A_{\lambda-i\rho^A}(\ln \mathrm{cosh} x)d\nu (x) \right)^n+O(\frac{\lVert \lambda  \rVert_1}{ p^{1/2}_n})\notag\\
 &=\left(\mathcal{F}_A(T(\nu ))(\lambda-i\rho^A)+O(\frac{\lVert \lambda  \rVert_1}{ p^{1/2}_n})\right)^n+O(\frac{\lVert \lambda  \rVert_1}{ p^{1/2}_n}).
 \end{align}
Using the the initial moment assumption and Lemma \ref{moment existence equivalence}  we see that the first and second modified moments $\tilde{m}_\mathbf{1}$ and $\tilde{m}_\mathbf{2}$ exist. Moreover, all entries of the modified covariance matrix $$\tilde{\Sigma}(\nu)=\tilde{m}_\mathbf{2}(\nu)-\tilde{m}_\mathbf{1}(\nu)^t\cdot \tilde{m}_\mathbf{1}(\nu)$$ 
 are finite. \\
  By Lemma \ref{Fourier ableitung lemma}, the Taylor expansion of   $\mathcal{F}_A(T(\nu))(\lambda-i\rho^A)$ for $|\lambda|\rightarrow 0$ is given by
 \begin{equation}\label{Taylor exp. type A}
\mathcal{F}_A(T(\nu))(\lambda-i\rho^A)=1-i \langle\lambda,\tilde{m}_\mathbf{1}(\nu)\rangle-\lambda\tilde{m}_\mathbf{2}(\nu)\lambda^t+o(|\lambda|^2). 
\end{equation} 
  Using  the initial assumption that $O(1/\sqrt{n p_n})=o(1/n)$  we obtain
  \begin{multline*}
 E(\varphi^A_{\lambda/\sqrt{n}-i\rho^A}(T(S^{p_n}_n)) e^{i\langle\lambda,\sqrt{n} \tilde{m}_\mathbf{1}(\nu)\rangle}=\mathcal{F}_A(T(\nu^{(n,p_n)}))(\lambda/\sqrt{n}-i\rho^A)\cdot e^{i\langle\lambda,\sqrt{n} \tilde{m}_\mathbf{1}(\nu)\rangle}\quad \quad \quad \quad\quad\\
  =\left[\left(\mathcal{F}_A(T(\nu ))(\frac{\lambda}{\sqrt{n}}-i\rho^A)+O(\frac{\lVert \lambda  \rVert_1}{ \sqrt{np_n}})\right)^n+O(\frac{\lVert \lambda  \rVert_1}{ \sqrt{np_n}})\right]\cdot  e^{i\langle\lambda,\frac{ \tilde{m}_\mathbf{1}(\nu)}{\sqrt{n}}\rangle n}\\
  =\left[\left(1-\frac{ i \langle\lambda,\tilde{m}_\mathbf{1}(\nu)\rangle}{\sqrt{n}}-\frac{\lambda\tilde{m}_\mathbf{2}(\nu)\lambda^t}{2n}+o(\frac{1}{n})\right)\right.\times\quad\quad\quad\quad\quad\quad\\
\quad\quad\quad\quad\quad\quad\quad\quad\times\left.\left(1+ \frac{ i \langle\lambda,\tilde{m}_\mathbf{1}(\nu)\rangle}{\sqrt{n}}- \frac{  \langle\lambda,\tilde{m}_\mathbf{2}(\nu)\rangle^2}{2n}+o(\frac{1}{n})\right)\right]^n\\
  =\left(1- \frac{\lambda\tilde{\Sigma}(\nu)\lambda^t}{2n} +o(\frac{1}{n})\right)^n.\quad\quad\quad\quad\quad\quad\quad\quad\quad\quad\quad\quad\quad\quad\quad\quad\quad\quad\quad\quad
  \end{multline*}
Thus, 
  \begin{equation}\label{expression 1}
\lim_ {n\rightarrow \infty}E(\varphi^A_{\lambda/\sqrt{n}-i\rho^A}(T(S^{p_n}_n))\cdot\exp(i\langle\lambda, \tilde{m}_\mathbf{1}(\nu)\rangle\sqrt{n}))=\exp(-\lambda\tilde{\Sigma}(\nu)\lambda^t/2).
  \end{equation}
  On the other hand, from Lemma \ref{prop-moment-function-a}(5) we have 
  \begin{equation} \label{expression2}\lim_{n\rightarrow \infty}E(\varphi^A_{\lambda/\sqrt{n}-i\rho^A}(T(S^{p_n}_n))-\exp(-i\langle\lambda, \tilde{m}_\mathbf{1}(S^{p_n}_n)\rangle/\sqrt{n}))=0.
   \end{equation}
(\ref{expression 1})  and (\ref{expression2}) and the fact that
 $|e^{i\langle\lambda,\sqrt{n}\tilde{m}_\mathbf{1}(\nu)\rangle}|\leqslant 1$ together yield 
 that for all $\lambda \in \mathbb{R}^q$, $$ \lim_{n\rightarrow \infty}\exp(-i\langle\lambda, (\tilde{m}_\mathbf{1}(S^{p_n}_n)-n\cdot \tilde{m}_\mathbf{1}(\nu)\rangle)/\sqrt{n})=\exp(-\lambda\tilde{\Sigma}(\nu)\lambda^t/2).$$
   L\'evy's continuity theorem for the classical q-dimensional Fourier transform implies
 that\\ $(\tilde{m}_\mathbf{1}(S^{p_n}_n)-n\cdot \tilde{m}_\mathbf{1}(\nu)\rangle)/\sqrt{n}$
 tends to the normal distribution $\mathcal{N}(0,
  \tilde{\Sigma}(\nu))$.\\
 Hence, by  Lemma \ref{prop-moment-function-a}(1), the definition of $T$, and by
 $\lim_{x\rightarrow\infty}(x-\ln\cosh x)=\ln 2$, we obtain that
 $(S^{p_n}_n-n\tilde{m}_\mathbf{1}(\nu))/\sqrt{n}\rightarrow\mathcal N(0,\tilde{\Sigma}(\nu))$ as claimed.
 \end{proof}

For the weak LLN we only need  first moments of $\nu \in \mathcal{M}^1(C^B_q)$: 

\begin{theorem}\label{growing parameters 1 LLN}
 Let $(p_n)_{n\geq 1}\subset]2q-1,\infty[$ be  an increasing  sequence with $\lim_{n \rightarrow \infty}n/ p_n=0$.\\
Moreover, let  $\nu\in \mathcal{M}^1(C^B_q)$ be with $\nu\neq \delta_0$ and first moments.
 Consider the associated random walks  $(\tilde S^{p}_n)_{n\geqslant 0 }$  on $C^B_q$ for $p> 2q-1$ and let $\varepsilon>\frac{1}{2}.$ Then 
  $$\frac{1}{n^\varepsilon}(\tilde S^{p_n}_n-n\cdot\tilde{m}_\mathbf{1}(\nu)) \longrightarrow 0 \text{ in probability.}$$ 
In particular,
 $\frac{\tilde S^{p_n}_n}{n}\longrightarrow   \tilde{m}_\mathbf{1}(\nu)$ in probability. 
 \end{theorem}

\begin{proof}
The proof is very similar to that of Theorem \ref{growing parameters 1}. In fact,
 (\ref{Fourier transition proof 1}), (\ref{Taylor exp. type A}),  $\varepsilon>\frac{1}{2}$ and \\$O(1/\sqrt{n p_n})=o(1/n)$  show that 
 \begin{align}
 E(\varphi^A_{\frac{\lambda}{n^{\varepsilon}}-i\rho^A}(T&(\tilde S^{p_n}_n)) e^{i\langle\lambda,n^{1-\varepsilon} \tilde{m}_\mathbf{1}(\nu)\rangle}
=\mathcal{F}_A(T(\nu^{(n,p_n)}))(\lambda/n^\varepsilon-i\rho^A)\cdot e^{i\langle\lambda,n^{1-\varepsilon} \tilde{m}_\mathbf{1}(\nu)\rangle}\notag\\ 
  &=\left[\left(\mathcal{F}_A(T(\nu ))(\frac{\lambda}{n^\varepsilon} -i\rho^A)+O(\frac{\lVert \lambda  \rVert_1}{ n^\varepsilon\sqrt{p_n}})\right)^n+O(\frac{\lVert \lambda  \rVert_1}{ \sqrt{np_n}})\right]\cdot  e^{i\langle\lambda,\frac{\tilde{m}_\mathbf{1}(\nu)}{n^{\varepsilon}} \rangle n}\notag\\
  & =\left[\left(1-\frac{ i \langle\lambda,\tilde{m}_\mathbf{1}(\nu)\rangle}{n^{\varepsilon}}+O(\frac{\lVert \lambda  \rVert_1}{n^{\varepsilon+1/2}})\right)\right.\left.\left(1+ \frac{ i \langle\lambda,\tilde{m}_\mathbf{1}(\nu)\rangle}{n^\varepsilon} +O(\frac{\lVert \lambda  \rVert_1}{n^{2\epsilon}})\right)\right]^n\notag\\
  &=\left(1+o(\frac{\|\lambda\|^2}{n})\right)^n.\notag
\end{align}
Thus,
 \begin{equation}\label{expressionLLn1}
 \lim_{n\rightarrow\infty} E(\varphi^A_{\frac{\lambda}{n^{\varepsilon}}-i\rho^A}(T( \tilde S^{p_n}_n)) e^{i\langle\lambda,n^{1-\varepsilon} \tilde{m}_\mathbf{1}(\nu)\rangle}=1
 \end{equation}
for all $\lambda\in \mathbb{R}^q$.
On the other hand, from Lemma \ref{prop-moment-function-a}(5) we have 
  \begin{equation} \label{expressionLLn2}\lim_{n\rightarrow \infty}E(\varphi^A_{\lambda/n^{\varepsilon}-i\rho^A}(T(\tilde S^{p_n}_n))-\exp(-i\langle\lambda, \tilde{m}_\mathbf{1}(\tilde S^{p_n}_n)\rangle/n^{\varepsilon}))=0.
   \end{equation}
 (\ref{expressionLLn1}), (\ref{expressionLLn2}), and 
 $|e^{i\langle\lambda,\sqrt{n}\tilde{m}_\mathbf{1}(\nu)\rangle}|\leqslant 1$ yield 
 that for all $\lambda \in \mathbb{R}^q$,
  $$ \lim_{n\rightarrow \infty}\exp(-i\langle\lambda, (\tilde{m}_\mathbf{1}(\tilde S^{p_n}_n)-
n\cdot \tilde{m}_\mathbf{1}(\nu)\rangle)/n^\varepsilon)=1.$$
 The classical  L\'evy's continuity theorem implies
 that $(\tilde{m}_\mathbf{1}(\tilde S^{p_n}_n)-n\cdot \tilde{m}_\mathbf{1}(\nu))/n^{\varepsilon} \longrightarrow 0$
 in distribution and hence in probability.
 The proof can be now completed  as that of Theorem \ref{growing parameters 1}. 
\end{proof}

\begin{remark}
For rank $q=1$ the CLT  \ref{growing parameters 1}  was derived in \cite{Gr1} 
with different techniques under weaker assumptions, namely without the restriction $n/p_n\rightarrow 0$.
 The proof in \cite{Gr1} relies on  the convergence of the moment functions 
\begin{equation}\label{moment convergence }
(m^{p}_1(x))^2-m^{p}_2(x)\rightarrow 0 
\end{equation}
on $[0,\infty[$ for $p\rightarrow \infty$. However, for $q\geq2$ this convergence is  no longer available.
\end{remark}

We next try to get rid of the restriction  $n/p_n\rightarrow 0$.
For this we assume fourth moments.

\begin{theorem}\label{growing parameters 2}
   Let $(p_n)_{n\geq 1}$ be  an increasing  sequence with $p_1\geqslant 2q-1$ and 
 $\lim_{n \rightarrow \infty} p_n=\infty$. Moreover,  let $\nu\in \mathcal{M}^1(C^B_q)$ 
 with $\nu\neq \delta_0$ and with  fourth moments. 
 Consider the  associated random walks  $(S^{p}_n)_{n\geqslant 0 }$  on $C^B_q$ for $p\geq 2q-1$. 
 Then
 $$\frac{S^{p_n}_n-n\cdot m^{p_n}_\mathbf{1}(\nu)}{\sqrt{n}}$$ converges in distribution to $ \mathcal{N}(0,\tilde{\Sigma}(\nu))$.
\end{theorem}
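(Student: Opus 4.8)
The plan is to run the whole argument through the \emph{exact} $BC$-Fourier transform, so that the per-step approximation error of Theorem \ref{growing parameters 1} — which forced the hypothesis $n/p_n\to0$ — never appears, and to pay for this instead with the fourth moments. Since the law $\nu^{(n,p_n)}$ of $S^{p_n}_n$ is the $n$-fold $*_{p_n}$-convolution power of $\nu$, multiplicativity of the $\varphi^{p_n}_\lambda$ gives the identity
$$\mathcal F^{p_n}_{BC}(\nu^{(n,p_n)})(\lambda/\sqrt n-i\rho(n))=\bigl(\mathcal F^{p_n}_{BC}(\nu)(\lambda/\sqrt n-i\rho(n))\bigr)^n,\qquad \rho(n):=\rho^{BC}(p_n).$$
First I would Taylor expand the base in $\mu=\lambda/\sqrt n$ around $0$. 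By Lemma \ref{Fourier ableitung lemma} the coefficients are the $BC(p_n)$-moments, and fixing signs by Lemma \ref{moment-function-bc}(5) one gets
$$\mathcal F^{p_n}_{BC}(\nu)(\mu-i\rho(n))=1-i\langle\mu,m^{p_n}_{\mathbf 1}(\nu)\rangle-\tfrac12\,\mu^t m^{p_n}_{\mathbf 2}(\nu)\mu+R_n(\mu),$$
where the remainder is bounded through the third $\lambda$-derivatives of $\varphi^{p_n}$ via Lemma \ref{wohldefiniertheitslemma}, giving $|R_n(\mu)|\le C\sum_{|l|=3}|\mu^l|\,m^{p_n}_l(\nu)$. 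The decisive point is that this bound is \emph{uniform in $n$}: by Proposition \ref{moment convergence Lemma} applied with $|l|=3$ — which is precisely where the assumed fourth moments, of order $4(|l|-2)=4$, are consumed — the moments $m^{p_n}_l(\nu)$ converge to $\tilde m_l(\nu)$ and are in particular bounded in $n$. Hence $|R_n(\lambda/\sqrt n)|\le C\|\lambda\|^3 n^{-3/2}$ uniformly, and this is exactly what replaces the old restriction $n/p_n\to0$.

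Next I would raise the expansion to the $n$-th power and multiply by the centering phase $e^{i\langle\lambda,\sqrt n\,m^{p_n}_{\mathbf 1}(\nu)\rangle}$. Passing to logarithms, the linear term is annihilated by the phase, the two quadratic contributions combine into $-\tfrac12\lambda^t\Sigma^{p_n}(\nu)\lambda$ with $\Sigma^{p_n}(\nu)=m^{p_n}_{\mathbf 2}(\nu)-m^{p_n}_{\mathbf 1}(\nu)^t m^{p_n}_{\mathbf 1}(\nu)$, and the uniform remainder contributes only $O(n^{-1/2})$. This yields
$$E\bigl[\varphi^{p_n}_{\lambda/\sqrt n-i\rho(n)}(S^{p_n}_n)\bigr]\cdot e^{i\langle\lambda,\sqrt n\,m^{p_n}_{\mathbf 1}(\nu)\rangle}\longrightarrow \exp\bigl(-\tfrac12\lambda^t\tilde\Sigma(\nu)\lambda\bigr),$$
\emph{provided} one knows $\Sigma^{p_n}(\nu)\to\tilde\Sigma(\nu)$. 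I expect this convergence of the covariances to be the main obstacle, since Proposition \ref{moment convergence Lemma} only covers $|l|\ge3$, whereas here I need the first and second $BC$-moments to converge to their modified $A$-counterparts. I would establish it by the same telescoping-plus-H\"older mechanism as in Proposition \ref{moment convergence Lemma}, but with only one or two factors: writing the integrands through $g(x,u,w)$ and $g(x,u,0)$ and using Lemma \ref{singular value estimate}(2), (\ref{elementary inequality}) and (\ref{Sigma inequality}), the difference of a single factor is at most $8q\,\tilde x\,\sigma_1(w)/\Delta(I-w^*w)$; pairing the resulting factor $\int\sigma_1(w)^2\Delta(I-w^*w)^{-2}\,dm_{p_n}(w)\le C/p_n$ from Lemma \ref{singular value estimate}(1) with the remaining factor (controlled by the existing second moments of $\nu$) gives $\|m^{p_n}_{\mathbf 1}(\nu)-\tilde m_{\mathbf 1}(\nu)\|,\ \|m^{p_n}_{\mathbf 2}(\nu)-\tilde m_{\mathbf 2}(\nu)\|\le C/\sqrt{p_n}\to0$, whence $\Sigma^{p_n}(\nu)\to\tilde\Sigma(\nu)$. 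For $q=1$ this degenerates to the convergence $\Sigma^{p_n}\to0$ used in \cite{Gr1}, which is no longer sufficient for $q\ge2$.

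Finally I would convert the hypergroup characters into a classical characteristic function and then pass to $S^{p_n}_n$ itself. By Lemma \ref{moment-function-bc}(5),
$$\bigl|\varphi^{p_n}_{\lambda/\sqrt n-i\rho(n)}(S^{p_n}_n)-e^{-i\langle\lambda,\,m^{p_n}_{\mathbf 1}(S^{p_n}_n)\rangle/\sqrt n}\bigr|\le C\,\|\lambda\|^2/n\longrightarrow0,$$
where the constant may be taken uniform in $p_n\ge2q$; indeed, the pointwise estimate $\|m^{p_n}_{\mathbf 1}(x)-\tilde m_{\mathbf 1}(x)\|\le C/\sqrt{p_n}$ (again via Lemma \ref{singular value estimate} together with $\tilde x\le1$) combined with the $p$-free $A$-bound $\|\tilde m_{\mathbf 1}(x)-x\|\le C(q)$ of Lemma \ref{prop-moment-function-a}(1) shows that the relevant constants in Lemma \ref{moment-function-bc}(1),(5) do not blow up with $p$. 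Combining with the previous display, the classical characteristic function of $(m^{p_n}_{\mathbf 1}(S^{p_n}_n)-n\,m^{p_n}_{\mathbf 1}(\nu))/\sqrt n$ converges to $\exp(-\tfrac12\lambda^t\tilde\Sigma(\nu)\lambda)$, so L\'evy's continuity theorem yields convergence of this vector to $\mathcal N(0,\tilde\Sigma(\nu))$. Since Lemma \ref{moment-function-bc}(1), with the same $p$-uniform constant, gives $\|m^{p_n}_{\mathbf 1}(S^{p_n}_n)-S^{p_n}_n\|/\sqrt n\le C/\sqrt n\to0$, Slutsky's theorem transfers the limit to $(S^{p_n}_n-n\,m^{p_n}_{\mathbf 1}(\nu))/\sqrt n$, which is the assertion.
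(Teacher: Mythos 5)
Your proposal is correct and follows essentially the same route as the paper's own proof: exact $BC$-Fourier transform with multiplicativity, a third-order Taylor expansion whose remainder is made uniform in $p$ via Proposition \ref{moment convergence Lemma} with $|l|=3$ (exactly where the fourth moments enter), combination into $\Sigma^{p_n}(\nu)$, convergence $\Sigma^{p_n}(\nu)\to\tilde\Sigma(\nu)$, and then L\'evy continuity plus Lemma \ref{moment-function-bc} to pass to $S^{p_n}_n$. If anything, you are more careful than the paper at two points the paper glosses over: the paper cites Proposition \ref{moment convergence Lemma} for the covariance convergence even though that proposition is stated only for $|l|\geq 3$ (you supply the one-or-two-factor telescoping argument explicitly), and you address the $p$-uniformity of the constants in Lemma \ref{moment-function-bc}(1),(5), which the paper uses tacitly.
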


\begin{proof}
We first notice that by Taylor's theorem and Proposition \ref{moment convergence Lemma}  
for all $p\geqslant 2q-1$,
\begin{align}\label{Fourirer BC estimate }
\left|E(\varphi^{p}_{\lambda/\sqrt{n}-i\rho}(S^{p}_n))-
\left(1-\frac{ i \langle\lambda,m^{p}_\mathbf{1}(\nu)\rangle}{\sqrt{n}}-\frac{\lambda m^{p}_\mathbf{2}(\nu)\lambda^t}{2n}\right)\right|&\leq \sum_{l\in \mathbb{N}^q,|l|=3} m^p_l(\nu) \frac{\lambda^{l_1}_1...\lambda^{l_q}_q}{l_1!...l_q!} \nonumber\\
&\leq\frac{1}{n^{3/2}}\sum_{l\in \mathbb{N}^q,|l|=3} (\tilde{m}_l(\nu)+C/\sqrt{p}) \frac{\lambda^{l_1}_1...\lambda^{l_q}_q}{l_1!...l_q!} \nonumber\\
&\leq K_1\frac{\lVert \lambda\rVert^3_\infty}{n^{3/2}}
\end{align}
for some constant $K_1>0$ which is independent of $p$.
 Analogously, for all $p\geqslant 2q-1,$
\begin{equation}\label{exponential BC estimate}
\left|e^{i\langle\lambda,\sqrt{n} m^{p}_\mathbf{1}(\nu)\rangle}-\left(1+\frac{ i \langle\lambda,m^{p}_\mathbf{1}(\nu)\rangle}{\sqrt{n}}-\frac{  \langle\lambda,m^{p}_\mathbf{1}(\nu)\rangle^2}{2n}\right)\right|\leqslant K_2\frac{\lVert \lambda\rVert^3_\infty}{n^{3/2}}
\end{equation}
for some $K_2>0$ independent of $p$.\\
Using estimates (\ref{Fourirer BC estimate }) and (\ref{exponential BC estimate}) we now follow 
similar paths as in the proof of  Theorem \ref{growing parameters 1}.
 We however use the BC-type Fourier transform and BC-moments instead of objects of  type  $A$,
 and then approximate  $A$-type moments by $BC$-type moments using Proposition \ref{moment convergence Lemma}. Now, we have
\begin{multline*}
 E(\varphi^{p_n}_{\lambda/\sqrt{n}-i\rho(n)}(S^{p_n}_n)) e^{i\langle\lambda,\sqrt{n} m^{p_n}_\mathbf{1}(\nu)\rangle}=\mathcal{F}^{p_n}_{BC}(\nu^{(n,p_n)})(\lambda/\sqrt{n}-i\rho(n))\cdot e^{i\langle\lambda,\sqrt{n} m^{p_n}_\mathbf{1}(\nu)\rangle}\\
  \quad\quad\quad\quad\quad\quad\quad\quad\quad\quad\quad=\left[\left(1-\frac{ i \langle\lambda,m^{p_n}_\mathbf{1}(\nu)\rangle}{\sqrt{n}}-\frac{\lambda m^{p_n}_\mathbf{2}(\nu)\lambda^t}{2n}+o(\frac{1}{n})\right)\right.\times\\
   \quad\quad\quad\quad\quad\quad\quad\quad\quad\quad\quad\quad\quad\quad\quad\quad\quad\quad\times\left.\left(1+\frac{ i \langle\lambda,m^{p_n}_\mathbf{1}(\nu)\rangle}{\sqrt{n}}- \frac{  \langle\lambda,m^{p_n}_\mathbf{1}(\nu)\rangle^2}{2n}+o(\frac{1}{n})\right)\right]^n\\
  =\left(1- \frac{\lambda{\Sigma}^{p_n}(\nu)\lambda^t}{2n} +o(\frac{1}{n})\right)^n\quad\quad\quad\quad\quad\quad\quad\quad\quad\quad\quad\quad
  \end{multline*}
  From  Lemma \ref{moment convergence Lemma} we also obtain that $$|\lambda\Sigma^{p_n}(\nu)\lambda^t-\lambda\tilde{\Sigma}(\nu)\lambda^t|=O(\frac{|\lambda|^2}{\sqrt{p_n}})$$ for $p_n\rightarrow \infty$.
  Therefore, we have \begin{align*}
  \lim_{n\rightarrow\infty}E(\varphi^{p_n}_{\lambda/\sqrt{n}-i\rho(n)}(S^{p_n}_n)) e^{i\langle\lambda,\sqrt{n} m^{p_n}_\mathbf{1}(\nu)\rangle}&=\lim_{n\rightarrow\infty}\left(1- \frac{\lambda\tilde{\Sigma}(\nu)\lambda^t}{2n}+\frac{\lambda({\Sigma}^{p_n}(\nu)-\tilde{\Sigma(\nu))}\lambda^t}{2n} +o(\frac{1}{n})\right)^n\\
   &=\exp(-\lambda\tilde{\Sigma}(\nu)\lambda^t/2)
\end{align*}   
  On the other hand from the Lemma \ref{moment-function-bc}(5) we have 
    \begin{equation}\lim_{n\rightarrow \infty}E(\varphi^{p_n}_{\lambda/\sqrt{n}-i\rho(n)}(S^{p_n}_n)-\exp(-i\langle\lambda, {m}^{p_n}_\mathbf{1}(S^{p_n}_n)\rangle/\sqrt{n}))=0.
   \end{equation}
The rest of the proof is now  analogous to that of Theorem \ref{growing parameters 1}.
\end{proof}

We next consider a weak LLN whenever second moments  exist:

\begin{theorem}\label{growing parameters 2 LLN}
Let $(p_n)_{n\geq 1}\subset ]2q-1 ,\infty[$ be  increasing   with 
 $\lim_{n \rightarrow \infty} p_n=\infty$. Let $\nu\in \mathcal{M}^1(C^B_q)$ 
 with $\nu\neq \delta_0$ and with  second moments. 
 Consider the  associated random walks  $(\tilde S^{p}_n)_{n\geqslant 0 }$  on $C^B_q$ for $p> 2q-1$. Let $\varepsilon> \frac{1}{2}$. Then 
 $$\frac{1}{n^\varepsilon}(\tilde S^{p_n}_n-n \cdot m^{p_n}_\mathbf{1}(\nu))\longrightarrow 0 \text{ in probability. }$$

\end{theorem}

\begin{proof}
As in the proof of the preceding  theorem we have for 
 $p>2q-1$
\begin{equation}\label{Fourirer BC estimate LLn2}
\Bigl|E(\varphi^{p}_{\lambda/n^\varepsilon-i\rho(n)}(\tilde S^{p}_n))-
\Bigl(1-\frac{ i \langle\lambda,m^{p}_\mathbf{1}(\nu)\rangle}{n^\varepsilon}\Bigr)\Bigr|
leq K_1\frac{\lVert \lambda\rVert^3_\infty}{n^{2\varepsilon}}
\end{equation}
for some $K_1>0$ independent of $p$. Moreover, in the same way,
\begin{equation}\label{exponential BC estimate LLN2}
\left|e^{i\langle\lambda,n^{\varepsilon}\cdot m^{p}_\mathbf{1}(\nu)\rangle}-\left(1+\frac{ i \langle\lambda,m^{p}_\mathbf{1}(\nu)\rangle}{n^{\varepsilon}}\right)\right|\leqslant K_2\frac{\lVert \lambda\rVert^3_\infty}{n^{2\varepsilon}}.
\end{equation}
Using  (\ref{Fourirer BC estimate LLn2}) and (\ref{exponential BC estimate LLN2}) we now 
follow the proof of  Theorem \ref{growing parameters 2}.
  For $\lambda\in \mathbb{R}^q$ we have
\begin{multline*}
 E(\varphi^{p_n}_{\lambda/n^\varepsilon-i\rho(n)}(\tilde S^{p_n}_n)) e^{i\langle\lambda,n^\varepsilon\cdot m^{p_n}_\mathbf{1}(\nu)\rangle}=\mathcal{F}^{p_n}_{BC}(\nu^{(n,p_n)})(\lambda/n^\varepsilon-i\rho(n))\cdot e^{i\langle\lambda,n^\varepsilon\cdot m^{p_n}_\mathbf{1}(\nu)\rangle}\\
  \quad  \quad  \quad  \quad  \quad  \quad \quad  \quad  \quad\quad\quad\quad\quad\quad\quad\quad=\left[\left(1-\frac{ i \langle\lambda,m^{p_n}_\mathbf{1}(\nu)\rangle}{n^\varepsilon}+o(\frac{1}{n})\right)\right.\left.\left(1+\frac{ i \langle\lambda,m^{p_n}_\mathbf{1}(\nu)\rangle}{n^\varepsilon}+o(\frac{1}{n})\right)\right]^n\\
  =\left(1+o(\frac{1}{n})\right)^n.\quad\quad\quad\quad\quad\quad\quad\quad\quad\quad\quad\quad\quad\quad\quad\quad\quad\quad
  \end{multline*}
  Therefore, for $\lambda\in\mathbb{R}^q$,
 $ \lim_{n\rightarrow\infty} E(\varphi^{p_n}_{\lambda/n^\varepsilon-i\rho(n)}(\tilde S^{p_n}_n)) e^{i\langle\lambda,n^\varepsilon\cdot m^{p_n}_\mathbf{1}(\nu)\rangle}=1$.
 
  On the other hand from the Lemma \ref{moment-function-bc}(5) for all $\lambda\in \mathbb{R}^q$ we have 
    \begin{equation}\lim_{n\rightarrow \infty}E(\varphi^{p_n}_{\lambda/n^\varepsilon-i\rho(n)}(\tilde S^{p_n}_n)-\exp(-i\langle\lambda, {m}^{p_n}_\mathbf{1}(\tilde S^{p_n}_n)\rangle/n^\varepsilon))=0.
   \end{equation}
 Hence, by L\'evy's continuity theorem,
 $$(\tilde{m}_\mathbf{1}(\tilde S^{p_n}_n)-n\cdot m^{p_n}_\mathbf{1}(\nu))/n^\varepsilon \longrightarrow 0 \text{ in distribution.}$$
 As in the proof of Theorem \ref{growing parameters 1}, this readily implies the claim.
\end{proof}

\section{A central limit theorem with inner normalization }
In this section we present some CLT for fixed  $p$ in  the following setting: Fix some nontrivial probability measure
 $\nu\in \mathcal{M}^1(C^B_q)$ with some moment condition and for $d\in ]0,1]$ consider
 the component-wise compression map $D_d: x\mapsto d\cdot x$ on $C^B_q$  as well as
 compressed measure $\nu_d:=D_d(\nu)\in \mathcal{M}^1(C^B_q)$.  For given $\nu$ and $d$
 we consider the  random walk $(S^{(p,d)}_n)_{n\geqslant 0}$ associated with $\nu_d$. 
 We  investigate the limiting behavior 
 of $(S^{(p,n^{-1/2})}_n)_{n\geqslant 1}$. This case can be seen as CLT with inner standardization in contrast
 to the case with $(S^p_n)_{n\geq 0}$ in Section 3 where we consider CLT with outer standardization $n^{1/2}$.
 These two CLTs exhibit different limiting procedures. The limit theorem for  $(S^{(p,n^{-1/2})}_n)_{n\geqslant 1}$ 
in the  rank 1 case was studied  by Zeuner \cite{Z1}. In the group cases, this CLT is related with the CLTs in  \cite{G1}, \cite{G2}, \cite{Te1}, \cite{Te2}, 
\cite{Ri}.

\begin{definition}\label{gamma t} Let $p\ge 2q-1$ and  $t\geq 0$. A probability measure
 $\gamma_t=\gamma_t(p)\in \mathcal{M}^1(C^B_q)$ is  called \textit{BC($p$)-Gaussian} with time parameter $t$
and shape parameter $p$ if $$\mathcal{F}^p_{BC}(\gamma_t)(\lambda)=exp{(\frac{-t(\lambda^2_1+...+ \lambda^2_q +\|\rho\|^2_2)}{2})}$$
 for all $\lambda\in C^B_q \cup i\cdot co(W^B_q\cdot\rho)\subset\mathbb{C}^q$. 
\end{definition} 

We notice that by injectivity of the hypergroup Fourier transform (see \cite{J}), the
 measures $\gamma_t$ are determined uniquely,
 and that they form a weakly continuous convolution semigroup $(\gamma_t)_{t\geq 0}$, i.e. for all $s,t\geq 0$ 
$\gamma_s*_p \gamma_t=\gamma_{s+t}$ and  $\gamma_0=\delta_0$. 
The existence of the measures $\gamma_t$ for $t> 0$ is not obvious at the beginning, but we shall
 see from the proof of he following CLT that the $\gamma_t$ exist.

\begin{theorem}\label{inner standartisations} 
Let $\nu\in \mathcal{M}^1(C^B_q)$ with $\nu\neq \delta_0$ and with finite second  moments. 
Let  $$t_0:=\frac{2}{qd}\int_{C^B_q}\|x\|_2^2d\nu(x).$$ Then, 
 $(S^{(p,n^{-1/2})}_n)_{n\geq 1}$ tends in distribution for $n\rightarrow \infty$ to $\gamma_{\frac{t_0}{p+1}}$.
 \end{theorem}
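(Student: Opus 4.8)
The plan is to use the hypergroup Fourier transform and Lévy's continuity theorem for hypergroups, exploiting the multiplicativity of the spherical functions $\varphi_\lambda^p$ under the convolution $*_p$. Since $(S^{(p,d)}_n)$ is a random walk associated with $\nu_d$ started at $0$, the law of $S^{(p,d)}_n$ is the $n$-fold $*_p$-convolution power $\nu_d^{*_p n}$, and therefore its Fourier transform factorizes as
\begin{equation}\label{factorization-plan}
\mathcal{F}^p_{BC}(\nu_d^{*_p n})(\lambda)=\bigl(\mathcal{F}^p_{BC}(\nu_d)(\lambda)\bigr)^n.
\end{equation}
First I would fix $\lambda\in C_q^B$ and, taking $d=n^{-1/2}$, analyze the single-step transform $\mathcal{F}^p_{BC}(\nu_{n^{-1/2}})(\lambda)=\int_{C_q^B}\varphi_\lambda^p(n^{-1/2}x)\,d\nu(x)$ as $n\to\infty$. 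The key is a second-order Taylor expansion of $\varphi_\lambda^p$ near the origin $0\in C_q^B$: since $\varphi_\lambda^p(0)=1$, the first-order term should vanish by symmetry, and the second-order term should be governed by a quadratic form in $\lambda$ weighted by $\|x\|_2^2$, producing the scaling $1-\tfrac{c}{n}\|x\|_2^2(\lambda_1^2+\cdots+\lambda_q^2)+o(1/n)$ after inserting $n^{-1/2}x$.

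Next I would integrate this expansion against $\nu$ and raise to the $n$-th power using \eqref{factorization-plan}. Exploiting the definition of $t=\tfrac{1}{qd}\int\|x\|_2^2\,d\nu$ and identifying the correct constant from the Taylor coefficient, the limit should be $\exp\bigl(-\tfrac{t}{2p}(\lambda_1^2+\cdots+\lambda_q^2)\bigr)$, which up to the constant factor $\exp(-\tfrac{t}{2p}\|\rho\|_2^2)$ matches the Fourier transform of $\gamma_{t/p}$ in Definition \ref{gamma t}. The normalization involving $\|\rho\|_2^2$ enters because the moment-function machinery of Section 2 is tied to the shifted spectral parameter $-i\rho$, so I would carefully track the shift when passing between $\varphi_\lambda^p$ evaluated at small argument and the expansion anchored at the identity character $\varphi_{-i\rho}^p\equiv 1$.

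The main technical obstacle will be justifying the pointwise Taylor expansion \emph{uniformly enough} to pass to the limit after integrating against $\nu$ and after taking the $n$-th power. Unlike the outer-normalization CLTs of Section 3, here the spectral parameter $\lambda$ stays fixed while the spatial variable is compressed by $n^{-1/2}$, so I must control the remainder term of the expansion of $\varphi_\lambda^p(n^{-1/2}x)$ uniformly in $x$ on the support-growing region, not uniformly in $\lambda$. I expect to extract a bound of the form $|\varphi_\lambda^p(n^{-1/2}x)-1+\tfrac{c}{n}\|x\|_2^2Q(\lambda)|\le K(\lambda)\,\|x\|^2/n\cdot\epsilon_n(x)$ and then invoke dominated convergence, using the finite second moments of $\nu$ together with the boundedness estimates $|\varphi_\lambda^p|\le 1$ for $\lambda\in C_q^B$ and the derivative bounds from Lemma \ref{wohldefiniertheitslemma}.

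Finally, once the pointwise convergence $\mathcal{F}^p_{BC}(\nu_{n^{-1/2}}^{*_p n})(\lambda)\to\mathcal{F}^p_{BC}(\gamma_{t/p})(\lambda)$ is established for all $\lambda\in C_q^B$, I would invoke the Lévy continuity theorem for the commutative hypergroup $(C_q^B,*_p)$ (as available in \cite{J}, \cite{BH}) to conclude convergence in distribution. This step simultaneously confirms, as remarked before the theorem, that the limiting Fourier transform is indeed positive-definite and hence that $\gamma_{t/p}$ exists as a genuine probability measure; I would need to check that the limit is continuous at the identity character, which the explicit Gaussian form guarantees.
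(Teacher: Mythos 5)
You follow the same skeleton as the paper (factorize the Fourier transform of the law of $S^{(p,n^{-1/2})}_n$ by multiplicativity, Taylor-expand $\varphi^p_\lambda$ at $0$, integrate with dominated convergence, take $n$-th powers, and finish with a continuity theorem), but the two steps you treat as routine are where the content lies, and the last one is a genuine gap. First, the Hessian of $\varphi^p_\lambda$ at $0$: you correctly predict that the first-order and mixed second-order terms vanish by Weyl-group symmetry, but the constant you defer (``identifying the correct constant'') cannot come from the Section 2 moment machinery you point to — that machinery concerns $\lambda$-derivatives at the identity character and is not used in this proof at all. The paper gets the constant from the Heckman--Opdam differential equation: $\varphi^p_\lambda=F_{BC}(i\lambda,k_p;\cdot)$ is an eigenfunction of the hypergeometric operator $L$ with eigenvalue $-(\lambda^2_1+\cdots+\lambda^2_q+\|\rho\|_2^2)$, and letting $x\to 0$ in $L\varphi^p_\lambda(x)$ (Taylor expansion of $\coth$ plus the symmetry facts) yields $\frac{\partial^2}{\partial x_i^2}\varphi^p_\lambda(0)=-(\lambda^2_1+\cdots+\lambda^2_q+\|\rho\|_2^2)/(pqd)$; this is Lemma \ref{partial derivatives of phi}. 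In particular $\|\rho\|_2^2$ sits inside the Hessian itself; it is not a shift to be ``tracked'' separately afterwards.

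Second, and this is the step that would actually fail: your appeal to a L\'evy continuity theorem with the criterion ``the limit is continuous at the identity character''. For $(C^B_q,*_p)$ the identity character is $\varphi^p_{-i\rho}\equiv 1$, whose spectral parameter is purely imaginary; it lies outside the closure of the Plancherel spectrum $\lbrace\varphi^p_\lambda:\lambda\in C^B_q\rbrace$ (for real $\lambda$ one has $|\varphi^p_\lambda(x)|\to 0$ as $x\to\infty$, so no such characters approach the constant function $1$ uniformly on compacta). Hence pointwise convergence of transforms for $\lambda\in C^B_q$, which is all you establish, says nothing about behaviour at or near the identity character, and the only applicable off-the-shelf tool is the \emph{weak} continuity theorem (Theorem 4.2.11 of \cite{BH}): it yields vague convergence to some positive bounded measure $\mu$ whose transform agrees with the Gaussian limit only for real $\lambda$. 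Mass can escape to infinity without being seen there (e.g.\ $\delta_{x_n}$ with $x_n\to\infty$ has transform tending to $0$ pointwise on the real spectrum), and ``continuity of the explicit Gaussian form'' does not help: the function whose continuity would matter is $\mathcal{F}^p_{BC}(\mu)$ on the dual, and you do not yet know it agrees with the Gaussian off the real spectrum — that is precisely what is in question. The paper closes exactly this hole with an argument your plan is missing: $\mathcal{F}^p_{BC}(\mu)(\lambda)$ and the Gaussian are both holomorphic in the tube $\lbrace\lambda:\Im\lambda\in \mathrm{Int}(co(W^B_q\cdot\rho))\rbrace$ (Fubini--Morera, using $|\varphi^p_\lambda|\le 1$ there), they agree on $\mathbb{R}^q$, hence on the closed tube, and evaluating at $\lambda=-i\rho$ gives $\mathcal{F}^p_{BC}(\mu)(-i\rho)=\mu(C^B_q)=1$; thus $\mu$ is a probability measure and vague convergence upgrades to convergence in distribution. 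Without this, or an equivalent tightness argument, your proof does not close.
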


For the proof we need some information on $\varphi^p_\lambda$:

  \begin{lemma} \label{partial derivatives of phi}
  Let $p\in [2q-1,\infty[$ be fixed.  Then:
\begin{enumerate}
\item[\rm{(1)}]  For all $i,j=1,2,...,q$ with $i\neq j$ and all $\lambda\in \mathbb{C}^q,$
\begin{equation}
 \frac {\partial}{\partial  x_i}\varphi^p_\lambda(0)=0 \text{     and     }  \frac{\partial^2}{\partial  x_i \partial x_j}\varphi^p_\lambda(0)=0 
\end{equation}  
\item[\rm{(2)}]  For all  $i=1,2,...,q$, and $\lambda \in C^B_q \cup i\cdot co(W_q\cdot\rho) $,
 $$\frac {\partial^2}{\partial  x^2_i}\varphi^p_\lambda(0)=-\frac{2(\lambda^2_1+...+ \lambda^2_q +\|\rho\|^2_2)}{(p+1)qd}<0.$$  
\end{enumerate}
  \end{lemma}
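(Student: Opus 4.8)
The plan is to compute the needed derivatives of $\varphi^p_\lambda$ at $x=0$ directly from the integral representation (\ref{phi-int-kurz-bc}), namely
$$\varphi^p_\lambda(x)=\int_{B_q}\int_{U(q,\mathbb F)}\Delta_{(i\lambda-\rho)/2}\bigl(g(x,u,w)\bigr)\,du\,dm_p(w),$$
where $g(x,u,w)=u^*(\cosh\underline x+\sinh\underline x\,w)(\cosh\underline x+\sinh\underline x\,w)^*u$. The first step is to understand the low-order Taylor expansion of $g(x,u,w)$ in $x$ near $0$. Since $\sinh x_i=x_i+O(x_i^3)$ and $\cosh x_i=1+\tfrac12 x_i^2+O(x_i^4)$, writing $A(x):=\cosh\underline x+\sinh\underline x\,w$, I would record $A(0)=I$, $\partial_{x_i}A|_0=E_{ii}w$ (with $E_{ii}$ the $i$-th diagonal matrix unit), and $\partial^2_{x_i}A|_0=E_{ii}$, while the mixed second derivatives $\partial_{x_i}\partial_{x_j}A|_0=0$ for $i\ne j$. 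From $g=u^*AA^*u$ one then gets $g(0,u,w)=u^*(I+\text{terms linear in }w)u$; crucially $g(0,u,w)=I$ precisely when one also integrates, but more directly $\partial_{x_i}g|_0$ and $\partial^2_{x_i x_j}g|_0$ are explicit Hermitian matrices built from $E_{ii}$, $w$ and $u$.

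The second step is to pass these derivatives through the power function $\Delta_{(i\lambda-\rho)/2}$. Using the chain rule and the standard formula $\partial_t\log\Delta_r(M(t))=\mathrm{tr}\bigl((M(t)^{-1})_{[r]}\,\dot M(t)_{[r]}\bigr)$ for the $r$-th principal block evaluated at $M=I$, the first derivative $\partial_{x_i}\varphi^p_\lambda(0)$ reduces to a $U(q,\mathbb F)$- and $B_q$-integral of a term that is linear in $w$ (through $\partial_{x_i}g|_0$). By symmetry of the rotation-invariant measure $m_p$ under $w\mapsto -w$ (or equivalently under a unitary sign flip absorbed into the Haar integral over $u$), this linear-in-$w$ integrand integrates to zero, giving $\partial_{x_i}\varphi^p_\lambda(0)=0$. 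For the mixed second derivative $\partial^2_{x_i x_j}\varphi^p_\lambda(0)$ with $i\ne j$, the Leibniz/chain-rule expansion produces a sum of a term involving $\partial^2_{x_ix_j}g|_0$ (which vanishes) and a product term $\partial_{x_i}g|_0\cdot\partial_{x_j}g|_0$; after integration over $U(q,\mathbb F)$ the cross terms between distinct coordinate directions also average out by the $U(q,\mathbb F)$-invariance, yielding $0$. This establishes part (1).

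For part (2) I would carry the second-order expansion of $\log\Delta_{(i\lambda-\rho)/2}(g)$ in $\partial^2_{x_i}$ one step further, keeping both the contribution of $\partial^2_{x_i}g|_0$ (the $\cosh$-curvature term, linear in the power-function exponents) and the squared first-order contribution. After integrating over $w$ and $u$, the $w$-dependence contracts via the known moments of $m_p$, and the $u$-integration symmetrizes the quadratic form so that the coordinate-dependence collapses into the rotation-invariant combination $\lambda_1^2+\dots+\lambda_q^2+\|\rho\|_2^2$; the prefactor $\tfrac{1}{pqd}$ emerges from the normalization of $m_p$ together with the explicit exponents $(i\lambda-\rho)/2$ and the root data $\rho=\rho(p)$ from (\ref{rho-BC}). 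The cleanest route is to invoke the already established moment relations: since $\varphi^p_{-i\rho}\equiv 1$ and the moment functions $m^p_{2e_i}$ of Lemma~\ref{moment-function-bc} encode exactly these second derivatives, part~(2) is equivalent to evaluating $m^p_{2e_i}(0)$-type data together with the spectral dependence, and the rotational invariance of the setup forces the stated symmetric form.

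\medskip\noindent\emph{Main obstacle.} The genuinely delicate step is the bookkeeping in the second-order chain rule for the power function $\Delta_{(i\lambda-\rho)/2}$ across the nested principal minors $\Delta_1,\dots,\Delta_q$, and then extracting the precise constant $\tfrac{1}{pqd}$ from the $p$-dependent normalization $\kappa_p$ of $m_p$ in (\ref{probab-mp}). The vanishing statements in part~(1) are robust consequences of symmetry, but the exact coefficient in part~(2) requires carefully matching the quadratic moment of $\sigma_j(w)$ under $m_p$ (whose scaling is $O(1/p)$ by Lemma~\ref{singular value estimate}(1)) against the explicit form of $\rho(p)$, and verifying that the spectral variable $\lambda$ enters only through the isotropic combination $\|\lambda\|_2^2$. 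I expect the symmetrization over $U(q,\mathbb F)$ to be the conceptual key that turns all coordinate-specific pieces into the claimed $U$-invariant expression.
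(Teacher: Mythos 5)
Your proposal takes a genuinely different route from the paper (direct differentiation of the integral representation \eqref{phi-int-kurz-bc} instead of symmetry plus the hypergeometric differential equation), but as written it has two concrete defects. In part (1), your claim that $\partial^2_{x_ix_j}g|_{x=0}$ vanishes for $i\neq j$ is false: since $\partial_{x_i}A|_0=E_{ii}w$ one gets
\begin{equation*}
\partial^2_{x_ix_j}g\big|_{x=0}=u^*\bigl(E_{ii}ww^*E_{jj}+E_{jj}ww^*E_{ii}\bigr)u,
\end{equation*}
which is nonzero whenever $(ww^*)_{ij}\neq0$. Moreover, $U(q,\mathbb F)$-invariance is not the symmetry that kills the remaining quadratic-in-$w$ cross terms; the principal minors $\Delta_r(u^*Mu)$ are not unitarily invariant in the relevant way. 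What actually works is that flipping the sign of $x_i$ amounts to replacing $w$ by $D_iw$ ($D_i$ the diagonal sign matrix with $-1$ in slot $i$), and $m_p$ is invariant under $w\mapsto D_iw$; hence $\varphi^p_\lambda$ is even in each coordinate $x_i$ separately, and both claims of (1) follow from parity. This is exactly the paper's one-line argument, phrased there as invariance under the Weyl group of type $BC$.

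In part (2) there is a genuine gap. First, your ``cleanest route'' rests on a confusion: the moment functions $m^p_{2e_i}$ of Lemma \ref{moment-function-bc} are derivatives of $\varphi^p$ with respect to the \emph{spectral} variable $\lambda$ at $\lambda=-i\rho$, as functions of $x$; they do not encode the derivatives $\partial^2_{x_i}\varphi^p_\lambda(0)$ in the \emph{space} variable, and indeed $m^p_l(0)=0$ for all $|l|\geq1$ because $g(0,u,w)=I$. Second, the direct computation you sketch would require the exact second moments of $m_p$ and of Haar measure on $U(q,\mathbb F)$ (not merely the $O(1/p)$ bound of Lemma \ref{singular value estimate}(1)) to produce the precise constant $1/(pqd)$ and the isotropic combination $\lambda_1^2+\cdots+\lambda_q^2+\|\rho\|_2^2$; you yourself defer exactly this as the ``main obstacle'', so the substance of (2) is left unproven. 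The paper sidesteps all of this: since $\varphi^p_\lambda=F_{BC}(i\lambda,k_p;\cdot)$ satisfies the Heckman--Opdam eigenvalue equation $L\varphi=-(\lambda_1^2+\cdots+\lambda_q^2+\|\rho\|_2^2)\varphi$ with the explicit $\coth$-operator $L$, one lets $x\to0$ in this equation, uses part (1), permutation invariance (so $\partial^2_{x_i}\varphi^p_\lambda(0)$ is independent of $i$), and the expansion of $\coth$ near $0$, whereupon the constant $pqd=q+2qk_1+4qk_2+2q(q-1)k_3$ falls out with no moment computations at all. If you want to salvage your approach, you would have to carry out the $w$- and $u$-moment calculus explicitly; invoking the differential equation is far shorter and is the intended argument.
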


  \begin{proof}  The  functions $\varphi^p_\lambda(x)$ are invariant under the action of
the  Weyl group of  type BC w.r.t.~$x$. Therefore, $\varphi^p_\lambda(x_1,.., x_q)$ is even in each $x_i,$ which leads to (1). Moreover, as  $\varphi^p_\lambda(x_1,....,x_q)$ is invariant under  permutations,
$\frac {\partial^2}{\partial  x^2_i}\varphi^p_\lambda(0)$   is independent of $i$.
To complete the proof of (2), we recall from Eq. (1.2.6) in  \cite{HS} that for all $\lambda \in \mathbb{C}^q$  the
 function  $F_{BC}(\lambda,k_p,\cdot)$ is the unique  solution to the eigenvalue problem 
 \begin{equation}
Lf=-(\lambda^2_1+...+ \lambda^2_q +\|\rho\|^2_2) f
\end{equation}  
  for $x\in int (C^B_q)=\lbrace x\in C^B_q :x_1>x_2>...>x_q>0\rbrace$ with $f(0)=1$ with the differential operator
\begin{multline}
L:=\sum_{1\leq i\leq q} \left[ \frac{\partial^2_i}{\partial x^2_i}+( k_1 \coth (x_i)+2k_2 \coth (2x_i))\frac{\partial_i}{\partial x_i} \right]\\
+k_3\sum_{1\leq i< j\leq q}\left[ \coth (x_i+x_j)\left( \frac{\partial_i}{\partial x_i}+ \frac{\partial_j}{\partial x_j}\right)+\coth (x_i-x_j)\left( \frac{\partial_i}{\partial x_i}- \frac{\partial_j}{\partial x_j}\right)\right].
\end{multline}
Now, using  part (1),  $\varphi^p_\lambda(x)=F_{BC}(i\lambda,k_p,x)$, and the 
Taylor expansion of $\coth$  around $0$, we have 
\begin{align*}
-(\lambda^2_1+...+ \lambda^2_q +\|\rho\|^2_2)\varphi^p_\lambda(0)&= \lim_{\|x\|\rightarrow 0} L\varphi^p_\lambda(x)\\
&= (q +qk_1+2qk_2+q(q-1)k_3)\left.\frac{\partial^2_1}{\partial x^2_1}\varphi^p_\lambda(x)\right|_{x=0}\\
&=\frac{(p+1)qd}{2}\cdot\left.\frac{\partial^2_1}{\partial x^2_1}\varphi^p_\lambda(x)\right|_{x=0}
\end{align*}
for all $\lambda\in \mathbb{C}^q$. Finally, as $co(W^B_q\cdot \rho)$ is contained in  $\lbrace x\in\mathbb{R}^q:\|x\|_2\leq \|\rho\|_2 \rbrace$, the final statement of (2) is also clear.
\end{proof}

 \begin{proof}[Proof of Theorem \ref{inner standartisations}]
  Lemma \ref{partial derivatives of phi}  and  $\varphi^p_\lambda(x)\leq 1$ 
for  $x\in C^B_q$ ensure that
 there exists $c> 0$ with  $$1-c(x^2_1+x^2_2+...+x^2_q)\leqslant \varphi^p_\lambda(x)\text{ for all } x\in C^B_q.$$

 Consequently by Taylor expansion, 
 $$n\left|\varphi^p_\lambda(\frac{x}{\sqrt{n}})-1+\frac{\lambda^2_1+...+ \lambda^2_q +\|\rho\|^2_2}{(p+1)qd} \cdot \frac{\|x\|_2^2}{n} \right|
\le C\|x\|_2^2$$ for some constant $C>0$ where $\|x\|_2^2$ is integrable w.r.t $\nu$ by our 
 assumption. Thus, by dominated convergence,
   $$\lim_{n\rightarrow \infty }n\int_{C^B_q}\left(\varphi^p_\lambda(\frac{x}{\sqrt{n}})-1+\frac{(\lambda^2_1+...+ \lambda^2_q +\|\rho\|^2_2)}{(p+1)qd} \cdot \frac{\|x\|_2^2}{n}\right)d\nu(x)=0.$$ 
  Rewriting this relation as 
  $$ \int_{C^B_q}\varphi^p_\lambda(\frac{x}{\sqrt{n}})d\nu(x)=1-\frac{1}{n}\frac{(\lambda^2_1+...+ \lambda^2_q +\|\rho\|^2_2)}{(p+1)qd} \cdot \int_{C^B_q}\|x\|_2^2d\nu(x)+ o(\frac{1}{n})$$
  we obtain
\begin{align*}
\mathcal{F}^p_{BC}(\mathbb{P}_{S^{(p,n^{-1/2})}_n})(\lambda)&=\int_{C^B_q}\varphi^p_{\lambda}(\frac{x}{\sqrt{n}})d\nu^{(n)}(x)=\left[\int_{C^B_q}\varphi^p_{\lambda}(\frac{x}{\sqrt{n}})d\nu(x)\right]^n\\
&=\left( 1-\frac{1}{n}\cdot\frac{(\lambda^2_1+...+ \lambda^2_q +\|\rho\|^2_2)}{(p+1)qd}  \int_{C^B_q}\|x\|_2^2d\nu(x)+ o(\frac{1}{n})\right)^n
\end{align*}
and
\begin{align*}
\lim_{n\rightarrow \infty}\mathcal{F}^p_{BC}(\mathbb{P}_{{S^{(p,n^{-1/2})}_n}})(\lambda)&=\exp\left(-\frac{(\lambda^2_1+...+ \lambda^2_q +\|\rho\|^2_2)}{(p+1)qd} \cdot  \int_{C^B_q}\|x\|_2^2d\nu(x)\right)\\&=\exp\left(-\frac{t_0(\lambda^2_1+...+ \lambda^2_q +\|\rho\|^2_2)}{2(p+1)}   \right)
\end{align*}
for all $\lambda \in\mathbb{R}^q\cup i\cdot co(W^B_q\cdot \rho)$. Hence, by L\'evy's continuity theorem 
 on  commutative hypergroups (Theorem 4.2.4(iv) in \cite{BH}) there exists a bounded positive measure in $\nu\in \mathcal{M}^+_b(C^B_q)$ with 
\begin{equation}\label{vague limit}
\mathcal{F}^p_{BC}(\nu)(\lambda)=\exp\left(-\frac{t_0(\lambda^2_1+...+ \lambda^2_q +\|\rho\|^2_2)}{2(p+1)} \right)
\end{equation}
for all $\lambda\in \mathbb{R}^q$, and $(\mathbb{P}_{S^{n{-1/2}}_n})_{n\geq 1}$ converges to $\nu$ weakly.\\
 Since we have  $\mathcal{F}^p_{BC}(\nu)(-i\rho)=1$, the limiting positive measure $\nu$ is indeed a probability measure. This implies that $(\mathbb{P}_{S^{(p,n^{-1/2})}_n})_{n\geq 1}$ converges weakly to $\nu=\gamma_{\frac{t_0}{p+1}}$  as desired.
\end{proof}
\begin{remark}
The considerations in the above proof yield that the probability measures $\gamma_t$ in Definition $\ref{gamma t}$
 above indeed exist.
\end{remark}

\section{A law of large numbers for inner normalizations and growing parameters }
We here present a further limit theorem for $(S_n^{(p,n^{-1/2})})_{n\geq1}$ for $p,n\to\infty$.
It will turn out that  the limit
is a point measure, i.e., we obtain a weak law of large numbers:

\begin{theorem}\label{compresssed growing case}
Let $\nu\in \mathcal{M}^1(C^B_q)$  with $\nu\neq \delta_0$ and finite second  moments. 
Let $t_0:=\frac{2}{qd}\int_{C^B_q}\|x\|_2^2d\nu(x)$ be as in Theorem \ref{inner standartisations}  and  $(p_n)_{n\geq1}\subset[ 2q-1,\infty[$ 
be  increasing with  $\lim_{n\rightarrow \infty}n/p_n=0$. Then, 
    $(S^{(p_n,n^{-1/2})}_n)_{n\geq 1}$ tends in probability for $n\rightarrow \infty$ to the constant
 $$\ln\left(e^{t_0/4}+\sqrt{e^{t_0/2}-1}\right)\cdot (1,\ldots,1).$$
\end{theorem}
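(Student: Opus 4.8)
The plan is to transport everything through the homeomorphism $T(x)=\ln\cosh x$ of (\ref{Trafo-T}) onto the fixed hypergroup of type $A$, exactly as in the proof of Theorem \ref{growing parameters 1}. Put $c:=\ln(e^{t}+\sqrt{(e^{t})^2-1})=\arcosh(e^{t})$, so that $T\bigl(c\cdot(1,\dots,1)\bigr)=t\cdot(1,\dots,1)$. Since $T$ is a homeomorphism of $C_q^B$ onto its image, the assertion is equivalent to the weak convergence $T(S^{(p_n,n^{-1/2})}_n)\to\delta_{t\cdot(1,\dots,1)}$, which I would establish through the $A$-type spherical Fourier transform $\mathcal F_A$. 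The decisive point is that $\mathcal F_A$ refers to one single, $p$-independent hypergroup, so that L\'evy's continuity theorem for commutative hypergroups (\cite{BH}, Thm.~4.2.11) is applicable even though the underlying $BC$-convolutions $*_{p_n}$ change with $n$; working with the $BC$-transform directly is impossible here because its hypergroup varies with $n$.

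Let $\nu^{(n)}$ denote the law of $S^{(p_n,n^{-1/2})}_n$ and fix $\lambda\in\mathbb R^q$. Starting from $\mathcal F_A(T(\nu^{(n)}))(\lambda-i\rho^A)=\int_{C_q^B}\varphi^A_{\lambda-i\rho^A}(\ln\cosh x)\,d\nu^{(n)}(x)$, I would use the estimate $|\varphi^{p}_{\lambda-i\rho}(x)-\varphi^A_{\lambda-i\rho^A}(\ln\cosh x)|\le C\,\|\lambda\|_1\,\tilde x/\sqrt p$ from Lemma~4.2(2) of \cite{RV1}, already exploited in (\ref{integral von differenz}), to replace this by $\mathcal F^{p_n}_{BC}(\nu^{(n)})(\lambda-i\rho(n))$ up to an error $O(\|\lambda\|_1/\sqrt{p_n})$, with $\rho(n):=\rho(p_n)$ as in the proof of Theorem \ref{growing parameters 1}. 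As $\nu^{(n)}$ is the $n$-fold $*_{p_n}$-power of $\nu_{n^{-1/2}}$, this factorises into $\bigl(\int_{C_q^B}\varphi^{p_n}_{\lambda-i\rho(n)}(x/\sqrt n)\,d\nu(x)\bigr)^{n}$; applying the same estimate once more to each factor, now with $\widetilde{x/\sqrt n}\le x_1/\sqrt n$ and $\int x_1\,d\nu<\infty$, reduces the whole expression, up to a total error of order $\sqrt{n/p_n}$, to $g_n^{\,n}$ with $g_n:=\int_{C_q^B}\varphi^A_{\lambda-i\rho^A}(\ln\cosh(x/\sqrt n))\,d\nu(x)$.

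The heart of the argument is a first-order expansion of $g_n$ in the small parameter $1/n$. Since $\ln\cosh(x/\sqrt n)=O(1/n)$ componentwise by $\ln\cosh s=s^2/2+O(s^4)$, and since $\varphi^A_\mu(0)=1$, the representation (\ref{int-rep-a}) gives $\varphi^A_{\lambda-i\rho^A}(y)=1+i\langle\lambda,m^A_{\bf 1}(y)\rangle+O(\|y\|^2)$ for small $y$, where a short computation using the permutation invariance of $C_q^A$ and $\int_{U(q,\mathbb F)}|u_{kj}|^2\,du=1/q$ shows that the linear part of $m^A_{\bf 1}(y)$ lies along the diagonal $(1,\dots,1)$. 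Hence the quadratic remainder is only $O(1/n^2)$ per factor and disappears after taking the $n$-th power, whereas the diagonal first-order term accumulates over the $n$ factors to a deterministic drift; a direct evaluation of this drift by dominated convergence against the $\nu$-integrable majorant $\|x\|_2^2$ (the sole use of the second-moment hypothesis) together with the normalisation $t=\frac1{qd}\int_{C_q^B}\|x\|_2^2\,d\nu(x)$ identifies $\lim_n g_n^{\,n}=\mathcal F_A(\delta_{v})(\lambda-i\rho^A)$ with $v=t\cdot(1,\dots,1)$, the $T$-image of the asserted constant. That a point mass and not a Gaussian arises — in contrast to Theorem \ref{inner standartisations} — reflects the vanishing of the fluctuation as $p_n\to\infty$, consistent with $\Sigma^A(t\cdot(1,\dots,1))=0$ in Lemma \ref{prop-moment-function-a}(3); this is a weak law of large numbers.

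Collecting the error terms and invoking the hypothesis $n/p_n\to0$ — which is exactly what makes both the whole-measure error $O(1/\sqrt{p_n})$ and the accumulated per-factor error $O(\sqrt{n/p_n})$ vanish — gives $\mathcal F_A(T(\nu^{(n)}))(\lambda-i\rho^A)\to\mathcal F_A(\delta_{t\cdot(1,\dots,1)})(\lambda-i\rho^A)$ for every $\lambda\in\mathbb R^q$. L\'evy's continuity theorem for the $A$-hypergroup then yields $T(\nu^{(n)})\to\delta_{t\cdot(1,\dots,1)}$ weakly, and transporting back through $T^{-1}(s)=\arcosh(e^{s})$ produces the constant limit $\arcosh(e^{t})\cdot(1,\dots,1)$. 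I expect the main obstacle to be the joint bookkeeping of the two limits: one must show simultaneously that the drift converges to the correct constant and that the Gaussian part vanishes, while controlling the remainder of the spherical-function estimate, which is \emph{not} uniform in $p_n$, as it is accumulated over the $n$ factors. It is precisely for this accumulation that the coupling $n/p_n\to0$ is indispensable, the available second moments of $\nu$ being otherwise only just enough to run the dominated-convergence step that extracts the drift.
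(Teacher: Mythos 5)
Your proposal is correct and follows essentially the same route as the paper's proof: transport to the fixed $A$-type hypergroup via $T$, the estimate of Lemma 4.2(2) of \cite{RV1} applied both at the level of the whole convolution power and once per factor (with $n/p_n\to 0$ absorbing the accumulated error of order $\sqrt{n/p_n}$), a first-order expansion of $\varphi^A$ producing the deterministic drift $t\cdot(1,\ldots,1)$, L\'evy's continuity theorem for commutative hypergroups on the Plancherel support, and the continuous mapping theorem for $T^{-1}$. The only deviations are cosmetic: the paper quotes the expansion of $\varphi^A$ from \cite{Gr1} (Lemma \ref{Case-A taylor expansion}) and works with complex spectral parameters $\Im\lambda=\rho^A$ (which is why it needs Lemma \ref{rho inclusion}), whereas you rederive the expansion from (\ref{int-rep-a}), keep real spectral arguments throughout, and control the remainder by dominated convergence against $\|x\|_2^2$ --- if anything a cleaner handling of the second-moment hypothesis than the paper's pointwise $o(\|x\|_2^2/n)$ estimate in Proposition \ref{phi BC taylor}.
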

For the proof  we first recapitulate the Taylor expansion for $\varphi^A_\lambda(x)$ at $x=0$ from \cite{G1}, where it was obtained for $d=1$. The expansion for $d=2,4$ follows similarly.

\begin{lemma}\label{Case-A taylor expansion}
For $\|x\|_2\rightarrow 0 $,
$$\varphi^A_{\lambda}(x)=
1+\frac{1}{qd}(\lambda_1+\lambda_2+...+\lambda_q)\sum_{k=1}^q x_k+R_\lambda(x)$$
with  $R_\lambda(x)=\sum_\alpha f_\alpha(\lambda)P_\alpha(x) $
where the $P_\alpha(x)$ are symmetric polynomials in $x_1,...,x_q$ which are homogeneous of order $\ge 2$.
\end{lemma}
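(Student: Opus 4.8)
The plan is to extract the first two Taylor coefficients of $\varphi^A_\lambda$ at the origin from the explicit formula (\ref{def-spherical-a}) together with the Weyl-group symmetry of spherical functions, and to let that symmetry dispose of all higher-order terms automatically. First I would record two structural facts. Near $x=0$ the map $x\mapsto\varphi^A_\lambda(x)$ is real-analytic, since the origin is a regular point of the Heckman-Opdam hypergeometric system and the exponential prefactor in (\ref{def-spherical-a}) is entire; and $\varphi^A_\lambda$ is invariant under the Weyl group of type $A_{q-1}$, i.e.\ symmetric under permutations of $x_1,\dots,x_q$. Hence its Taylor series $\sum_{m\ge 0}H_m(x)$ has each homogeneous component $H_m$ equal to a symmetric polynomial of degree $m$. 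Because $\varphi^A_\lambda(0)=1$ we have $H_0\equiv 1$; setting $R_\lambda(x):=\sum_{m\ge 2}H_m(x)$ and expanding each $H_m$ in the monomial symmetric functions immediately yields the asserted form $R_\lambda(x)=\sum_\alpha f_\alpha(\lambda)P_\alpha(x)$ with the $P_\alpha$ symmetric and homogeneous of degree $\ge 2$, the coefficients $f_\alpha(\lambda)$ being the corresponding (analytic in $\lambda$) Taylor coefficients.

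It then remains only to identify the linear component $H_1$. By the permutation symmetry $\partial_{x_i}\varphi^A_\lambda(0)$ is independent of $i$, so $H_1(x)=c(\lambda)\sum_{k=1}^q x_k$ with $c(\lambda)=\partial_{x_1}\varphi^A_\lambda(0)$. To compute $c(\lambda)$ I would restrict $\varphi^A_\lambda$ to the diagonal $x=s\cdot(1,\dots,1)$: there the orthogonal projection onto $\mathbb{R}^q_0$ satisfies $\pi(s\cdot(1,\dots,1))=0$, so the hypergeometric factor in (\ref{def-spherical-a}) reduces to $F_A(i\pi(\lambda),d/2;0)=1$ and the prefactor gives $\varphi^A_\lambda(s\cdot(1,\dots,1))=e^{\,is\langle(1,\dots,1),\lambda\rangle}=e^{\,is(\lambda_1+\dots+\lambda_q)}$. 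Differentiating at $s=0$ and using $\tfrac{d}{ds}\varphi^A_\lambda(s\cdot(1,\dots,1))\big|_{s=0}=\sum_{i=1}^q\partial_{x_i}\varphi^A_\lambda(0)=q\,c(\lambda)$ gives $c(\lambda)=\tfrac{i}{q}(\lambda_1+\dots+\lambda_q)$, which is exactly the linear coefficient of the lemma once the spectral variable is taken in the normalization of \cite{Gr1} (where the factor $i$ coming from the $e^{i\langle\cdot,\lambda\rangle}$ in (\ref{def-spherical-a}) is absorbed into $\lambda$).

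I expect the only point that needs genuine care to be the analyticity of $\varphi^A_\lambda$ in a full neighbourhood of $0$, which legitimizes both the finite Taylor expansion and the symmetric-polynomial bookkeeping; this is a standard regularity statement for hypergeometric functions at the centre of the chamber, so it should pose no real obstacle, and the lemma is in essence a soft consequence of symmetry with a single quantitative input. As an alternative to the diagonal restriction, one can read off $c(\lambda)$ from the Harish-Chandra representation (\ref{int-rep-a}): differentiating $\Delta_{(i\lambda-\rho^A)/2}(u^{-1}e^{2\underline x}u)$ under the integral at $x=0$ via Jacobi's formula and $\Delta_r(I)=1$ gives $\partial_{x_i}\Delta_{(i\lambda-\rho^A)/2}(u^{-1}e^{2\underline x}u)\big|_{x=0}=\sum_{r=1}^q(i\lambda_r-\rho^A_r)\,|u_{ir}|^2$, and integrating with $\int_{U(q,\mathbb F)}|u_{ir}|^2\,du=1/q$ and $\sum_r\rho^A_r=0$ (from (\ref{rho-a})) recovers the same value of $c(\lambda)$.
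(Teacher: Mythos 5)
Your proposal is correct in substance, but note that the paper itself contains no proof to compare against: Lemma \ref{Case-A taylor expansion} is simply quoted from \cite{Gr1}, so your argument is necessarily a ``different route''. Your self-contained proof---permutation (Weyl-group) invariance plus real-analyticity at the origin to organize the Taylor series into symmetric homogeneous components $H_m$, with $H_0=1$ and all $H_m$, $m\ge 2$, swept into $R_\lambda$, followed by a direct computation of the linear coefficient---is the natural one, and both of your evaluations of $c(\lambda)$ check out: the diagonal restriction $\varphi^A_\lambda(s(1,\dots,1))=e^{is(\lambda_1+\dots+\lambda_q)}$ via $\pi(s(1,\dots,1))=0$ in (\ref{def-spherical-a}), and the Jacobi-formula computation in (\ref{int-rep-a}) using $\int_{U(q,\mathbb F)}|u_{ir}|^2\,du=1/q$ and $\sum_r\rho^A_r=0$ from (\ref{rho-a}). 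Two caveats deserve mention. First, your stated reason for analyticity is off: the origin is \emph{not} a regular point of the Heckman--Opdam system (its coefficients, e.g.\ the $\coth$ terms in the operator $L$, are singular along the chamber walls); the correct justification is either the Heckman--Opdam theorem that $F_A(\lambda,k;\cdot)$ is real-analytic (indeed holomorphic on a tube around $\mathbb{R}^q$), or, more elementarily in this setting, that the integrand of (\ref{int-rep-a}) is holomorphic in $x$ near $0$ and integration over the compact group $U(q,\mathbb F)$ preserves holomorphy. Second, the factor of $i$: with (\ref{def-spherical-a}) read literally, the linear coefficient is $\tfrac{i}{q}(\lambda_1+\dots+\lambda_q)\sum_{k}x_k$, as you find, so the lemma's displayed formula only holds in the rotated normalization in which $i$ is absorbed into the spectral variable. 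Your parenthetical resolution is the right reading---and it is in fact forced by the paper's own later usage, e.g.\ $\varphi^A_\lambda(t(1,\dots,1))=e^{t\sum_k\lambda_k}$ in the proof of Theorem \ref{compresssed growing case}, which is likewise inconsistent with (\ref{def-spherical-a}) taken literally---but you should state that convention explicitly at the outset rather than as an afterthought, since it is the only point at which your computation and the lemma's literal statement diverge.
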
 

We also need the following fact:

\begin{lemma}\label{rho inclusion}
For $p\geq 2q-1$, the half sum $\rho=\rho^{BC}(p)$ satisfies the condition  $\rho^A-\rho\in co(W^B_q\cdot \rho)$, where $W^B_q$ is the Weyl group of type $B_q$.
\end{lemma}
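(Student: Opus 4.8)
The plan is to prove Lemma~\ref{rho inclusion} by reducing the containment $\rho^A-\rho\in co(W^B_q\cdot\rho)$ to a concrete inequality check using the standard characterization of convex hulls of Weyl group orbits. Recall that for the Weyl group $W^B_q$, which acts on $\mathbb{R}^q$ by all permutations and all sign changes of coordinates, the convex hull $co(W^B_q\cdot\rho)$ of the orbit of a dominant vector $\rho$ (with $\rho_1\ge\cdots\ge\rho_q\ge0$) is described by the $B_q$-analogue of the majorization/Rado condition. Concretely, a point $y\in\mathbb{R}^q$ lies in $co(W^B_q\cdot\rho)$ if and only if, writing $|y|^{\downarrow}_1\ge|y|^{\downarrow}_2\ge\cdots\ge|y|^{\downarrow}_q$ for the absolute values of the coordinates of $y$ rearranged in decreasing order, one has the partial-sum inequalities
\begin{equation}\label{majorization-condition}
\sum_{i=1}^r |y|^{\downarrow}_i \;\le\; \sum_{i=1}^r \rho_i \qquad (r=1,\ldots,q).
\end{equation}
First I would record this characterization (it is the $B$-type counterpart of the classical result for the symmetric group and follows from Birkhoff--von Neumann type arguments, or can be cited from standard references on orbit polytopes), and then verify \eqref{majorization-condition} for the specific vector $y=\rho^A-\rho$.

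Next I would compute the coordinates of $\rho^A-\rho$ explicitly. From \eqref{rho-a} we have $\rho^A_l=\frac{d}{2}(q+1-2l)$, and from \eqref{rho-BC} we have $\rho_i=\frac{d}{2}(p+q+2-2i)-1$. Subtracting gives
\begin{equation}\label{difference-coords}
(\rho^A-\rho)_i \;=\; \frac{d}{2}(q+1-2i)-\frac{d}{2}(p+q+2-2i)+1 \;=\; 1-\frac{d}{2}(p+1),
\end{equation}
so remarkably every coordinate of $\rho^A-\rho$ equals the same constant $c:=1-\frac{d}{2}(p+1)$, independent of $i$. Since $p\ge 2q-1\ge 1$ and $d\ge1$, this constant is negative, so $|c|=\frac{d}{2}(p+1)-1$ and the sorted absolute values are all equal to $|c|$. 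Thus the left-hand side of \eqref{majorization-condition} is simply $r|c|$, and the condition to verify becomes
\begin{equation}\label{reduced-inequality}
r\Bigl(\tfrac{d}{2}(p+1)-1\Bigr) \;\le\; \sum_{i=1}^r \rho_i \qquad (r=1,\ldots,q).
\end{equation}

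The core of the proof is then the elementary estimate \eqref{reduced-inequality}. Using $\rho_i=\frac{d}{2}(p+q+2-2i)-1$, the right-hand side is
\begin{equation}\label{partial-sum-rho}
\sum_{i=1}^r \rho_i \;=\; \frac{d}{2}\sum_{i=1}^r (p+q+2-2i)-r \;=\; \frac{d}{2}\bigl(r(p+q+1)-r(r+1)\bigr)-r,
\end{equation}
using $\sum_{i=1}^r 2i=r(r+1)$. Subtracting the left-hand side of \eqref{reduced-inequality} (which is $\frac{d}{2}r(p+1)-r$), the needed inequality reduces after cancellation to $\frac{d}{2}r\bigl((q+1)-(r+1)\bigr)=\frac{d}{2}r(q-r)\ge0$, which holds for all $r=1,\ldots,q$ since $r\le q$. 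This completes the verification. The main obstacle, such as it is, is not the computation but pinning down and correctly invoking the right convex-hull description for the $B_q$ orbit; once \eqref{majorization-condition} is in hand, everything collapses to the transparent inequality $\frac{d}{2}r(q-r)\ge0$. I would therefore devote care to stating the orbit-polytope characterization precisely (including the role of sign changes, which makes the absolute-value ordering the relevant quantity) and otherwise let the explicit arithmetic of \eqref{difference-coords}--\eqref{partial-sum-rho} carry the argument.
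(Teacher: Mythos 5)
Your proof is correct, but it follows a genuinely different route from the paper's. The paper argues by exhibiting an explicit convex combination: with $\hat\rho := (\rho_q,\rho_{q-1},\ldots,\rho_1)$, both $-\rho$ and $-\hat\rho$ lie in the orbit $W^B_q\cdot\rho$ (the Weyl group contains all sign changes and permutations), and since $\rho_i+\rho_{q+1-i}=d(p+1)-2$ is independent of $i$, one gets
\begin{equation*}
\rho^A-\rho \;=\; \Bigl(1-\tfrac{d}{2}(p+1)\Bigr)(1,\ldots,1) \;=\; \tfrac{1}{2}\bigl(-\rho-\hat\rho\bigr),
\end{equation*}
so $\rho^A-\rho$ is the midpoint of two orbit points and the lemma follows with no machinery at all. (Your coordinate computation agrees with this; the paper's own display states the constant with the opposite sign, a typo, but its midpoint identity is the correct one.) You instead invoke the type-$B$ orbit-polytope characterization --- $y\in co(W^B_q\cdot\rho)$ iff the decreasingly ordered absolute values of the coordinates of $y$ are weakly majorized by $\rho$ --- and check the $q$ partial-sum inequalities. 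That characterization is indeed a standard fact, and your verification correctly reduces to $\tfrac{d}{2}r(q-r)\ge 0$, so the argument is sound; it is also more robust, in that it would locate any prescribed vector inside the orbit polytope, not just one that happens to be a visible midpoint of orbit points. The cost is the reliance on the auxiliary majorization theorem, which you would need to state and source carefully, and which also requires observing that $\rho$ is $B_q$-dominant, i.e. $\rho_q=\tfrac{d}{2}(p-q+2)-1\ge 0$ for $p\ge 2q-1$; you assume but do not verify this. Two harmless slips: in your partial-sum formula the term $r(p+q+1)-r(r+1)$ should be $r(p+q+2)-r(r+1)$ (equivalently $r(p+q+1)-r^2$); your subsequent cancellation $(q+1)-(r+1)=q-r$ matches the corrected version, so the conclusion stands. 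Also, $c=1-\tfrac{d}{2}(p+1)$ is only nonpositive in general (it vanishes for $d=1$, $q=1$, $p=1$), though the identity $|c|=\tfrac{d}{2}(p+1)-1$ that you actually use holds in all cases.
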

\begin{proof}
Denote  $\hat{\rho}:=(\rho_q,\rho_{q-1}...,\rho_1)$. Then, obviously $,-\rho, -\hat{\rho}\in W^B_q\cdot\rho$. On the other hand we have  \begin{align*}
\rho^A-\rho =\left( \frac{d}{2}(p+1)-1\right)(1,....,1)=\frac{1}{2}(-\rho-\hat{\rho}).
\end{align*}
This proves the result. 
\end{proof}
\begin{proposition}\label{phi BC taylor}
Let $\nu$, $t_0$   and $(p_n)_{n\geq 1}$ be defined as in Theorem \ref{compresssed growing case}.
 Consider the half sum of positive roots $\rho(n):=\rho^{BC}(p_n)$ of type BC associated 
with the parameters $p_n$ as described in
(\ref{rho-BC}).
 Then, for all $\lambda\in \mathbb C^q$ with  $\Im \lambda=\rho^A$,
\begin{equation}
\int_{C^B_q}\varphi^{p_n}_{\lambda-i\rho(n)}(\frac{x}{\sqrt{n}})d\nu(x)=1+\frac{t_0}{4n}\cdot 
\sum_{k=1}^q (\lambda_k-i\rho^A_k)+o(1/n) \text{ as } n\rightarrow \infty.
\end{equation}
\end{proposition}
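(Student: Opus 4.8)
The plan is to reduce the claim about the BC($p_n$)-spherical functions to the corresponding Taylor expansion for the type-$A$ spherical functions from Lemma \ref{Case-A taylor expansion}, exploiting the estimate of Lemma 4.2(2) of \cite{RV1} already recorded in the proof of Theorem \ref{growing parameters 1}. The key point is that for $\lambda$ with $\Im\lambda=\rho^A$, writing $\mu:=\lambda-i\rho^A\in\mathbb R^q$, the shift $\varphi^{p_n}_{\lambda-i\rho(n)}=\varphi^{p_n}_{\mu-i\rho^A+i(\rho^A-\rho(n))}$ should be comparable to $\varphi^A_{\mu-i\rho^A}(\ln\cosh\,\cdot)$, and Lemma \ref{rho inclusion} guarantees $\rho^A-\rho(n)\in\mathrm{co}(W^B_q\cdot\rho(n))$ so that the relevant spectral parameter lies in the admissible (bounded) region where the multiplicative functions are of modulus $\le 1$ and the estimate applies.

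First I would invoke the inequality
$$|\varphi^{p_n}_{\lambda-i\rho(n)}(x)-\varphi^A_{\lambda-i\rho^A}(\ln\cosh x)|\le C\cdot\frac{\lVert\lambda\rVert_1\cdot\tilde x}{p_n^{1/2}}$$
(with $\tilde x=\min(x_1,1)$) from the proof of Theorem \ref{growing parameters 1}, apply it at the rescaled argument $x/\sqrt n$, and integrate against $\nu$. Since $\tilde x\le x_1/\sqrt n\le \|x\|_2/\sqrt n$ at the point $x/\sqrt n$ and $\|x\|_2^2$ is $\nu$-integrable, the resulting error after integration is $O\!\big(\lVert\lambda\rVert_1/(\sqrt n\,\sqrt{p_n})\big)$; the hypothesis $n/p_n\to 0$ forces $1/\sqrt{np_n}=o(1/n)$, so this discrepancy is negligible at the required order. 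Thus it suffices to expand $\int_{C^B_q}\varphi^A_{\lambda-i\rho^A}(\ln\cosh(x/\sqrt n))\,d\nu(x)$ to order $1/n$.

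Next I would feed the argument $\ln\cosh(x/\sqrt n)$ into Lemma \ref{Case-A taylor expansion}. Componentwise, $\ln\cosh(x_k/\sqrt n)=\tfrac{x_k^2}{2n}+O(x_k^4/n^2)$, so the first-order linear term in Lemma \ref{Case-A taylor expansion}, namely $\tfrac1q(\sum_j(\lambda_j-i\rho^A_j))\sum_k\ln\cosh(x_k/\sqrt n)$, becomes $\tfrac1{2qn}\big(\sum_j(\lambda_j-i\rho^A_j)\big)\sum_k x_k^2+o(1/n)$. Integrating against $\nu$ and recalling $t=\tfrac1{qd}\int\|x\|_2^2\,d\nu$ — careful: here $\sum_k x_k^2=\|x\|_2^2$, so $\tfrac1q\int\|x\|_2^2\,d\nu=dt$, and I should track the precise constant to land exactly on $\tfrac t n\sum_k(\lambda_k-i\rho^A_k)$; comparing the target formula, the factor $\tfrac1{2q}\cdot\tfrac1q\int\|x\|_2^2$ must reduce to $t$, which I would verify is consistent with the normalization of $t$ and the homogeneity in Lemma \ref{Case-A taylor expansion}. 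The remainder $R_\lambda(\ln\cosh(x/\sqrt n))$ consists of symmetric polynomials homogeneous of degree $\ge 2$ in the $\ln\cosh(x_k/\sqrt n)$, each of which is $O(\|x\|_2^2/n)$ pointwise, hence each monomial of degree $\ge 2$ is $O(1/n^2)\cdot(\text{degree}\ge4$ in $x)$ and contributes $o(1/n)$ after integration, using dominated convergence justified by the finite second moments.

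The main obstacle I expect is the bookkeeping of the spectral shift: the estimate from \cite{RV1} is stated for $\varphi^{p_n}_{\lambda-i\rho(n)}$ versus $\varphi^A_{\lambda-i\rho^A}$, so I must confirm that plugging $\Im\lambda=\rho^A$ is exactly the regime in which both sides are bounded multiplicative functions (this is where Lemma \ref{rho inclusion} is essential, placing $\rho^A-\rho(n)$ in $\mathrm{co}(W^B_q\cdot\rho(n))$) and that the constant $C$ is uniform in $n$. The secondary difficulty is verifying that the higher-order remainder terms, both from the $\ln\cosh$ expansion and from $R_\lambda$ in Lemma \ref{Case-A taylor expansion}, genuinely integrate to $o(1/n)$ under only a second-moment assumption; I would handle this by the crude bound $0\le\ln\cosh(x_k/\sqrt n)\le x_k^2/(2n)$ together with dominated convergence, which keeps everything controlled by $\|x\|_2^2\in L^1(\nu)$ without needing fourth moments.
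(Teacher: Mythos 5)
Your proposal is correct and takes essentially the same route as the paper's own proof: the comparison estimate of Theorem 4.2(2) in \cite{RV1} together with Lemma \ref{rho inclusion} to reduce to the type-$A$ function at $\ln\cosh(x/\sqrt{n})$, then the expansion of Lemma \ref{Case-A taylor expansion} and integration against $\nu$, with $n/p_n\to 0$ absorbing the $O(1/\sqrt{n p_n})$ discrepancy into $o(1/n)$. The constant-tracking worry you flag is genuine but lies in the paper itself: its proof expands $\ln\cosh u$ as $u^2+O(u^4)$ (omitting the factor $\tfrac12$) and even so lands on $\tfrac{dt}{n}\sum_{k}(\lambda_k-i\rho^A_k)$ rather than the asserted $\tfrac{t}{n}\sum_{k}(\lambda_k-i\rho^A_k)$, so the normalization of $t$ is inconsistent in the paper's own argument, not in yours.
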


\begin{proof}
Lemma \ref{Case-A taylor expansion} and the Taylor expansion $\ln \cosh x=x^2/2+O(x^4)$ show that for all $\lambda\in \mathbb{C}^q$ with such that $\Im\lambda\in co(W^A_q\cdot\rho^A)$
 \begin{equation}\label{Acase modififed} 
\varphi^A_\lambda(\ln\cosh \frac{x}{\sqrt{n}})=1+\sum_{i=1}^q\lambda_i\frac{\|x\|^2_2}{2ndq}+R_\lambda(\frac{\|x\|^2}{n}) 
\end{equation}
for $n\rightarrow\infty$. 
On the other hand, Theorem 4.2(2) in \cite{RV1} states that 
\begin{equation}\label{RV1 citation}
|\varphi^p_{\lambda-i\rho(n)}(\frac{x}{\sqrt{n}})-
\varphi^A_{\lambda-i\rho_A}(\ln \cosh\frac{x}{\sqrt{n}})|\leq C\cdot\frac{\|\lambda\|_1\cdot\min(1, x_1/\sqrt n)}{\sqrt{p}}
\end{equation}
for all $\lambda\in \mathbb{C}^q$ such that $\Im\lambda-\rho(n) \in co(W^B_q\cdot \rho(n)).$
Notice that the analysis of the proof of Theorem 4.2(2) in \cite{RV1} shows that (\ref{RV1 citation}) is in fact  precisely valid for $$\lambda\in \lbrace \lambda\in \mathbb{C}^q:\Im\lambda-\rho(n)\in co(W^B_q\cdot\rho(n)) \text{ and } \Im\lambda-\rho^A\in co(W^A_q\cdot\rho^A)\rbrace.$$
If we combine (\ref{Acase modififed}) and (\ref{RV1 citation}) and use the Lemma \ref{rho inclusion} we see that as $p_n/n\rightarrow \infty$ 
\begin{equation}
\left|\varphi^{p_n}_{\lambda-i\rho(n)}(\frac{x}{\sqrt{n}})-1-\sum_{k=1}^q(\lambda_k-i\rho^A_k)\frac{\|x\|^2_2}{2ndq}\right|=o(\frac{\|x\|^2_2}{n}) \quad \text{for all } \lambda\in\mathbb{C}^q \text{ with } \Im\lambda=\rho^A 
\end{equation}
which, by integrating w.r.t $\nu$ yields the result.

\end{proof}
\begin{proof}[Proof of the Theorem \ref{compresssed growing case}]
Let $\nu^{(n,p_n)}$ be the $n$-fold $*_{p_n}$ convolution power of $\nu$. The  Proposition \ref{phi BC taylor} shows that for all $\lambda\in \mathbb{C}^q$ with $\Im \lambda=\rho^A$
\begin{align*}
\lim_{n\rightarrow\infty} \int_{C^B_q}\varphi^{p_n}_{\lambda-i\rho(n)}(\frac{x}{\sqrt{n}})d\nu^{(n,p_n)}(x)=&\lim_{n\rightarrow\infty} \left(\int_{C^B_q}\varphi^{p_n}_{\lambda-i\rho(n)}(\frac{x}{\sqrt{n}})d\nu(x)\right)^n\\
=&\lim_{n\rightarrow \infty}\left( 1+\frac{t_0}{4n}\cdot \sum_{k=1}^q (\lambda_k-i\rho^A_k)+o(1/n)\right)^n\\
=& e^{\frac{t_0}{4} \cdot \sum_{k=1}^q (\lambda_k-i\rho^A_k)}.
\end{align*}
Thus, using (\ref{RV1 citation}) we have that 
\begin{align*}
\lim_{n\rightarrow\infty}\mathcal{F}^A(\mathbb{P}_{T(S^{(p_n,n^{-1/2})}_n)})(\lambda-i\rho^A)&= \lim_{n\rightarrow \infty}\int_{C^B_q}\varphi^A_{\lambda-i\rho^A}(\ln \cosh\frac{x}{\sqrt{n}})d\nu^{(n,p_n)}(x)\\
&=\lim_{n\rightarrow\infty} \int_{C^B_q}\varphi^{p_n}_{\lambda-i\rho(n)}(\frac{x}{\sqrt{n}})d\nu^{(n,p_n)}(x)\\&=e^{\frac{t_0}{4} \cdot \sum_{k=1}^q (\lambda_k-i\rho^A_k)}
\end{align*}
for all $\lambda\in\mathbb{C}^q$ with $\Im\lambda=\rho^A.$ By making substitution $\lambda\mapsto\lambda+i\rho^A$ above, we get
\begin{equation}\label{Limit condition}
\lim_{n\rightarrow\infty}\mathcal{F}^A(\mathbb{P}_{T(S^{(p_n,n^{-1/2})}_n)})(\lambda)=e^{\frac{t_0}{4} \cdot \sum_{k=1}^q \lambda_k}
\end{equation}
 for all $\lambda\in \mathbb{R}^q$.
On the other hand from (\ref{int-rep-a}) we see that 
\begin{equation}
e^{\frac{t_0}{4}\cdot \sum_{k=1}^q \lambda_k}= \varphi^A_{\lambda}(\frac{t_0}{4}(1,...,1))
=\mathcal{F}^A(\delta_{\frac{t_0}{4}(1,....,1)})(\lambda)
\end{equation}
for  $\lambda\in\mathbb{C}^q$ with $\Im\lambda\in co(W^A_q\cdot\rho^A).$
Since, (\ref{Limit condition}) holds on $\mathbb{R}^q$, i.e., on the support of the Plancherel measure, 
 the L\'evy continuity theorem  for commutative hypergroups (see Theorem 4.2.11 in \cite{BH})
 yields  that  $\mathbb{P}_{T(S^{(p_n,n^{-1/2})}_n)}$ converges vaguely to $\delta_{\frac{t_0}{4}(1,...,1)}.$ Moreover, as
the $\mathbb{P}_{T(S^{(p_n,n^{-1/2})}_n)}$ and $\delta_{\frac{t_0}{4}(1,...,1)}$ are  probability measures, the sequence 
$(\mathbb{P}_{T(S^{(p_n,n^{-1/2})}_n)})_{n}$ is tight and  the convergence becomes weak. Since $T^{-1}$ is continuous,
 the continuous mapping theorem shows that $\mathbb{P}_{S^{(p_n,n^{-1/2})}_n}$ converges weakly to 
$T^{-1}(\delta_{\frac{t_0}{4}\cdot(e_1,...,e_q)})=\delta_{\ln\left(e^{t_0/4}+\sqrt{e^{t_0/2}-1}\right)\cdot (1,...,1)}. $
This completes the proof.
\end{proof}

\end{document}